\newtheorem{theorem}{\indent Theorem}[section]
\newtheorem{corollary}{\indent Corollary}[section]
\newtheorem{lemma}{\indent Lemma}[section]
\newtheorem{remark}{\indent Remark}[section]
\numberwithin{equation}{section}
\newcommand{\R}{\mathbb{R}}
\newcommand{\al}{\alpha}
\newcommand{\ga}{\gamma}
\newcommand{\de}{\delta}
\newcommand{\mb}{\mathbb}
\begin{document}
	
\title[convergence rates for slow-fast system]{Strong and weak convergence rates for fully coupled multiscale stochastic differential equations driven by $\alpha$-stable processes}
\author{Kun Yin}

\address{Department of Mathematics, Shanghai Jiao Tong University, 200240 Shanghai, PR China.}
\email{epsilonyk@sjtu.edu.cn}

\keywords{Averaging principle; fractional Laplacian; slow-fast system; corrector; $\alpha$-stable process}

\subjclass[2020]{ 26D15; 60E15; 60G52; 60K37}

\begin{abstract}
	We first establish strong convergence rates for multiscale systems driven by $\alpha$-stable processes, with analyses constructed in two distinct scaling regimes. When addressing weak convergence rates of this system, we derive four averaged equations with respect to four scaling regimes. Notably, under sufficient H\"{o}lder regularity conditions on  the time-dependent drifts of slow process, the strong convergence orders are related to the optimal strong convergence order $1-\frac{1}{\alpha_{2}}$, and the weak convergence orders are 1. Our primary approach involves employing fractional nonlocal Poisson equations to construct ``corrector equations" that effectively eliminate inhomogeneous terms.
\end{abstract}
\maketitle

\section{Introduction}

\subsection{Backgrounds}Multiscale models are extensively applied in fields such as chemistry, biology, material sciences,physics and other fields. These models, often characterized by different time scales, referred to as slow-fast models or models with fast oscillation, serve to bridge partial differential equations and stochastic processes. The slow-fast stochastic differential equations, driven by Brownian motion as demonstrated in references \cite{RZK,AYV}, are represented as
\begin{equation}\label{1}
\left\{
\begin{aligned}
	& dX_{t}^{\varepsilon}=b(X_{t}^{\varepsilon},Y_{t}^{\varepsilon})dt+\delta_{1}(X_{t}^{\varepsilon},Y_{t}^{\varepsilon})dB^{1}_{t},\ X^{\varepsilon}_{0}=x\in \mathbb{R}^{d_{1} },\\
	&dY_{t}^{\varepsilon}=\frac{1}{\varepsilon}f(X_{t}^{\varepsilon},Y_{t}^{\varepsilon})dt+\frac{1}{\varepsilon^{\frac{1}{2}}}\delta_{2}(X_{t}^{\varepsilon},Y_{t}^{\varepsilon})dB^{2}_{t},\ Y^{\varepsilon}_{0}=y\in \mathbb{R}^{d_{2}},
\end{aligned}
\right.
\end{equation}
here $ B^{1}_{t}$ and $ B^{2}_{t}$ represent two independent Brownian processes. With certain dissipative condition on $f(x,y)$, a concept from dynamical system theory,
i.e., $\exists\beta>0$, s.t.,
$$
( f(x,y_{1})-f(x,y_{2}),y_{1}-y_{2})  \leq-\beta|y_{1}-y_{2}|^{2},
$$
 this condition is important in proving the existence and uniqueness of the invariant measure  $ \mu^{x}(dy)$ for the frozen equation which is related to fast process $Y_{t}^{\varepsilon}$,
$$
		dY_{t}^{x,y}=f(x,Y_{t})dt+\delta_{2}(x,Y_{t})dB^{2}_{t},\ Y_{0}=y\in \mathbb{R}^{d_{2}},
$$
$x$ is fixed here, then $X_{t}^{\varepsilon}$ converges as
$\varepsilon \rightarrow0$ to averaged equation 
$$
d\bar{X}_{t}=\bar{b}(\bar{X}_{t})dt+\bar{\delta_{1}}(\bar{X}_{t})dB^{1}_{t},\ X_{0}=x\in \mathbb{R}^{d_{1}},
$$
here $\bar{b}(x)=\int_{\mathbb{R}^{d_{2}}}b(x,y)\mu^{x}(dy)$, $\bar{\delta_{1}}(x)=\int_{\mathbb{R}^{d_{2}}}\delta_{1}(x,y)\mu^{x}(dy)$. 

Given the established existence of the averaged equation, we aim to further investigate the convergence rate of the slow-fast system. C.-E. Br\'{e}hier \cite{CEB2} explored the stochastic averaging principle for a class of randomly perturbed systems of partial differential equations, asserting a strong convergence order through the Khasminskii method for the stochastic averaging principle of SDEs. Meanwhile, the weak convergence order was determined by estimating the first-order term in an asymptotic expansion of the solution to one of the Kolmogorov equations associated with the system. In \cite{CEB} Br\'{e}hier examined a semilinear stochastic partial differential equation with slow-fast time scales and demonstrated that the orders of strong and weak convergence are $\frac{1}{2}$ and $1$, respectively.   It is worth to remind that the proof relies heavily on the Poisson equation technique, which generally yields the optimal convergence order and discusses an efficient numerical scheme based on heterogeneous multiscale methods. Other studies such as \cite{DIR,DL,RZK}, have utilized Khasminskii’s time discretization technique to analyse  strong convergence rate, while asymptotic expansion of solutions to Kolmogorov equations has been applied to examine the weak convergence rate. However, compared to these two approaches, the Poisson equation offers significant advantages in determining convergence rates.

Pardoux and Veretennikov studied diffusion approximations for slow-fast stochastic differential equations by Poisson equation method in their celebrated works \cite{EY1,EY2,EY3}, 
\begin{equation}
	\begin{aligned}
\mathcal{L}u(x,y)+g(x,y)=0,
	\end{aligned}\nonumber
\end{equation}
where $x\in \mathbb{R}^{d_{1}}$ is fixed and $y\in \mathbb{R}^{d_{2}}$,
\begin{equation}
	\begin{aligned}		
		\mathcal{L}u(x,y)=\sum_{i,j=1}^{d_{2}}a_{i,j}(x,y)\frac{\partial^{2}}{\partial y_{i}\partial y_{j}}u(x,y)+\sum_{i=1}^{d_{2}}f_{i}(x,y)\partial _{y_{i}}u(x,y), 
	\end{aligned}\nonumber
\end{equation}
  the probabilistic representation of solution in bounded domain $D$ with a smooth boundary and zero boundary condition (Dirichlet boundary condition) is 
\begin{equation}
	\begin{aligned}
	u(x,y)=\int_{0}^{\tau}\mathbb{E}g(x,Y_{t}^{x,y})dt,\ \tau=inf\{t>0, Y_{t}^{x,y}\notin D\},
	\end{aligned}\nonumber
\end{equation}
while for $Y_{t}^{x,y} \in \mathbb{R}^{d_{2}}$, 
\begin{equation}\label{1-1}
	\begin{aligned}
		u(x,y)=\int_{0}^{\infty}\mathbb{E}g(x,Y_{t}^{x,y})dt,
	\end{aligned}
\end{equation}
so the Centering condition 
\begin{equation}\label{1-2}
	\begin{aligned}
		\bar{g}(x)=\int_{\mathbb{R}^{d_{2}}}g(x,y)\mu^{x}(dy)=0,
		\end{aligned}\nonumber
	\end{equation}
 is necessary, together with ergodicity of $Y_{t}^{x,y}$, is essential to guarantee the existence of the solution $u(x,y)$ given by \eqref{1-1} and its local boundedness, see \cite[Theorem 1]{EY1}.

The time-dependent case of \eqref{1} has been studied in \cite{LRSX}, where the coefficients are locally Lipschitz continuous and satisfy the dissipative condition as follows, i.e.,  $\exists \lambda>0$, 
$$
		2( f(t,x,y_{1})-f(t,x,y_{2}),y_{1}-y_{2})  +\Arrowvert\delta_{2}(t,x,y_{1})-\delta_{2}(t,x,y_{2})\Arrowvert^{2}\leq-\lambda|y_{1}-y_{2}|^{2},
$$
here $t,x$ are fixed, this condition enables the existence and uniqueness of the invariant measure $\mu^{t,x}(dy)$ corresponding to the frozen equation 
$$
	dY^{t,x}_{s}=f(t,x,Y^{t,x}_{s})ds+\delta_{2}(t,x,Y^{t,x}_{s})dB^{2}_{s},\ Y_{0}=y\in \mathbb{R}^{d_{2} }.
$$

In the context of slow-fast SPDEs, C. Sandra \cite{CF} explored the averaging principle for stochastic reaction-diffusion equations. Their work on the solvability of Kolmogorov equations in Hilbert spaces and the regularity of solutions enables the generalization of classical approaches to finite-dimensional problems of this nature for SPDEs. Z. Dong et al. \cite{DSXZ} investigated the one-dimensional stochastic Burgers equation with slow and fast time scales, driven by a Wiener process, deriving both strong and weak convergence rates. Subsequently, \cite{YXJ}  extended this research to the stochastic Burgers equation Lévy process. Further studies on stochastic dispersive equations and hyperbolic  equations can be found in \cite{CSS,WC,PG,FWL,PXW}.

Compared to continuous systems, slow-fast systems driven by processes with jumps also have seen significant advancements in recent decades. X.-B. Sun et al \cite{SXX} studied a slow-fast system driven by  independent $\alpha$-stable processes $L^{1}_{t}$ and $L^{2}_{t}$, where $\alpha\in(1,2)$,
they demonstrated that the optimal strong convergence  order of $X^{\varepsilon}_{t}$ is $1-\frac{1}{\alpha}$, and the weak convergence order is $1$. We emphasize that they employ nonlocal Poisson equations, resembles corrector equation in homogenization theory, to eliminate the difference between $b(x,y)$ and $\bar{b}(x)$ through the generator of fast component  $Y^{\varepsilon}_{t}$. 

\subsection{Sketch of this paper}
In this paper we study the following fully coupled multiscale system, which extends the results of \cite{EY1,EY2,EY3,RX} about weak convergence rates of system driven by Brown motions to strong and weak convergence rates of system driven by $\al$-stable processes. For independent $\alpha$-stable processes  $L^{1}_{t}$, $L^{2}_{t}$ we have $\alpha_{1},\alpha_{2}$ respectively, and $1<\alpha_{1},\alpha_{2}<2$,  $\gamma_{\varepsilon},\eta_{\varepsilon},\beta_{\varepsilon}\rightarrow 0$ as $\varepsilon \rightarrow 0$, especially  $\dfrac{\eta_{\varepsilon}}{\beta_{\varepsilon}}<1$. We remind that $X_{t}^{\varepsilon}$, of which the drifts are time-dependent, is the slow process with a rapidly oscillating term, however, $Y_{t}^{\varepsilon}$ is fast process with two time scales and its drifts are time-independent, 
\begin{equation}\label{1.1}
	\left\{
	\begin{aligned}
		& dX_{t}^{\varepsilon}=b(t,X_{t}^{\varepsilon},Y_{t}^{\varepsilon})dt+\frac{1}{\gamma_{\varepsilon}}H(t,X_{t}^{\varepsilon},Y_{t}^{\varepsilon})dt+dL^{1}_{t},\ X^{\varepsilon}_{0}=x\in \mathbb{R}^{d_{1} },\\
		&dY_{t}^{\varepsilon}=\frac{1}{\eta_{\varepsilon}}f(X_{t}^{\varepsilon},Y_{t}^{\varepsilon})dt+\frac{1}{\beta_{\varepsilon}}c(X_{t}^{\varepsilon},Y_{t}^{\varepsilon})dt+\frac{1}{\eta_{\varepsilon}^{\frac{1}{\alpha_{2}}}}dL^{2}_{t},\ Y^{\varepsilon}_{0}=y\in \mathbb{R}^{d_{2}}.
	\end{aligned}
	\right.
\end{equation}

We investigate strong convergence rates between  $X_{t}^{\varepsilon}$ and its averaged equation $\bar{X}_{t}$ in Theorem \ref{SCRJ} over two regimes as follows,
\begin{equation}\label{1.1-1}
	\left\{
	\begin{aligned}
		&\lim\limits_{\varepsilon \rightarrow0} \frac{\eta_{\varepsilon}^{1-\frac{1-(1\wedge v)}{\alpha_{2}}}}{\gamma_{\varepsilon}^{2}}=0,\ \lim\limits_{\varepsilon \rightarrow0}\frac{\eta_{\varepsilon}}{\gamma_{\varepsilon}\beta_{\varepsilon} }=0,\\
		&\lim\limits_{\varepsilon \rightarrow0} \frac{\eta_{\varepsilon}^{1-\frac{1-(1\wedge v)}{\alpha_{2}}}}{\gamma_{\varepsilon}^{2}}=0,\ \eta_{\varepsilon}=\gamma_{\varepsilon}\beta_{\varepsilon},
	\end{aligned}
	\right.
\end{equation}
 the exponent $v\in((\alpha_{1}-\alpha_{2})^{+},\alpha_{1}]$, $(a)^{+}=max\{a, 0\}$, governing Hölder regularities of $H(t,x,y)$ and $b(t,x,y)$ with respect to $t$ and $x$, plays vital roles in our analysis. The condition  holding uniformly‌ across the two regimes is expressed as
\begin{equation}
	\begin{aligned}
		 \lim\limits_{\varepsilon \rightarrow0}\frac{\eta_{\varepsilon}^{\left[ \left( \frac{v}{\alpha_{2}}\right) \wedge \left( 1-\frac{1\vee(\alpha_{1}-v)}{\alpha_{2}}\right) \right] }}{\gamma_{\varepsilon} } =0,
	\end{aligned}\nonumber
\end{equation}
 we notice that significant simplifications emerge when  $v\geq[(\alpha_{1}-1)\vee (\alpha_{2}-1)]$,  \begin{equation}\label{1.5-2}
 	  \frac{\eta_{\varepsilon}^{\left[ \left( \frac{v}{\alpha_{2}}\right) \wedge \left( 1-\frac{1\vee(\alpha_{1}-v)}{\alpha_{2}}\right) \right] }}{\gamma_{\varepsilon} } =\frac{\eta_{\varepsilon}^{1-\frac{1}{\alpha_{2}}}}{\gamma_{\varepsilon} },
 \end{equation}
 it is worth emphasizing that  $\dfrac{\eta_{\varepsilon}^{1-\frac{1}{\alpha_{2}}}}{\gamma_{\varepsilon} }$ corresponds to the optimal strong convergence order $1-\frac{1}{\alpha}$ 
 illustrated in \cite{SXX}. Moreover, when $v\geq1$ the regime classification  $\dfrac{\eta_{\varepsilon}^{1-\frac{1-(1\wedge v)}{\alpha_{2}}}}{\gamma_{\varepsilon}^{2}}= \dfrac{\eta_{\varepsilon}}{\gamma_{\varepsilon}^{2}}$ , the term $\dfrac{\eta_{\varepsilon}}{\gamma_{\varepsilon}^{2}}$  fundamentally distinguishes the dynamical behaviors,  aligning with the framework first established in \cite{EY2,EY3} and more precise classifications in \cite{RX}, see more details in Corollary \ref{c71}.

While Theorem \ref{WCRD} establishes weak convergence rates of $X_{t}^{\varepsilon}$ across the following four regimes,
\begin{equation}\label{1.1-2}
	\left\{
	\begin{aligned}
		&\lim\limits_{\varepsilon \rightarrow0} \frac{\eta_{\varepsilon}^{1-\frac{1-(1\wedge v)}{\alpha_{2}}}}{\gamma_{\varepsilon}^{2}}=0,\ \lim\limits_{\varepsilon \rightarrow0}\frac{\eta_{\varepsilon}}{\gamma_{\varepsilon}\beta_{\varepsilon} }=0,\\
		&\lim\limits_{\varepsilon \rightarrow0} \frac{\eta_{\varepsilon}^{1-\frac{1-(1\wedge v)}{\alpha_{2}}}}{\gamma_{\varepsilon}^{2}}=0,\ \eta_{\varepsilon}=\gamma_{\varepsilon}\beta_{\varepsilon},\\
		& \lim\limits_{\varepsilon \rightarrow0}\frac{\gamma_{\varepsilon}}{\beta_{\varepsilon}}=0,\ \eta_{\varepsilon}=\gamma_{\varepsilon}^{2},\\
		&\eta_{\varepsilon}=\gamma_{\varepsilon}^{2}=\gamma_{\varepsilon}\beta_{\varepsilon},
	\end{aligned}
	\right.
\end{equation}
the condition holding in both Regime 1 and Regime 2 is 
 $ \lim\limits_{\varepsilon \rightarrow0}\frac{ \eta_{\varepsilon}^{\left[ \frac{v}{\alpha_{2}}\wedge \left( 1-\frac{\alpha_{1}-v}{\alpha_{2}} \right)\right]  }}{\gamma_{\varepsilon}}=0$, $v\in((\alpha_{1}-\alpha_{2})^{+},\alpha_{1}]$.
In contrast, Regime 3 and Regime 4 exhibit more complicated  relationships as $v\in ( \frac{\alpha_{2}}{2}\vee \frac{2\alpha_{1}-\alpha_{2}}{2}  ,\alpha_{1}]$, which ensures the validity of  $\lim\limits_{\varepsilon \rightarrow0}\gamma_{\varepsilon}^{\frac{2v}{\alpha_{2}}-\left[ 1\vee \left( \frac{2\alpha_{1}}{\alpha_{2}}-1\right)   \right] }=0$, and when $v=\alpha_{1}=\alpha_{2}$, 
\begin{equation}
	\frac{\eta_{\varepsilon}^{1-\frac{1-(1\wedge v)}{\alpha_{2}}}}{\gamma_{\varepsilon}^{2}}= \frac{\eta_{\varepsilon}}{\gamma_{\varepsilon}^{2}},\quad  
	\frac{ \eta_{\varepsilon}^{\left[ \frac{v}{\alpha_{2}}\wedge \left( 1-\frac{\alpha_{1}-v}{\alpha_{2}} \right)\right]  }}{\gamma_{\varepsilon}}=\frac{\eta_{\varepsilon}}{\gamma_{\varepsilon}},\quad \gamma_{\varepsilon}^{\frac{2v}{\alpha_{2}}-\left[ 1\vee \left( \frac{2\alpha_{1}}{\alpha_{2}}-1\right)   \right] }=\gamma_{\varepsilon},\nonumber
\end{equation}
similar to the analysis with \eqref{1.5-2}, we observe that $\dfrac{\eta_{\varepsilon}}{\gamma_{\varepsilon}}$ and $\gamma_{\varepsilon}$ are consistent with weak convergence order 1 
proposed in \cite{SXX}, see more discussions in Corollary \ref{c72}.

Owing to some technical challenges, the convergence rates for Regime 3 and Regime 4 cannot be established in the ‌strong convergence‌ sense. These regimes, formulated in ‌weak convergence analysis‌, will instead be rigorously examined in Remark \ref{R51} of strong convergence results.

\subsection{Organization of this paper}
We start with introducing some backgrounds on the multiscale system. In Section 2, we outline some important assumptions and present our main results. 
Section 3 is devoted to studying the well-posedness of  \eqref{1.1}, with moment estimates for   $(X^{\varepsilon}_{t},Y^{\varepsilon}_{t})$  presented in Theorem \ref{T32}.  Section 4 investigates the invariant measure of the frozen equation associated with $Y^{\varepsilon}_{t}$ in \eqref{1.1}.
In Section 5, we estibalish  nonlocal Poisson equations, serving as the ``corrector equation" in homogenization theory, to bridge the gap between 
$X^{\varepsilon}_{t}$ and $\bar{X}_{t}$, with regularity estimates, LLN type  and CLT type estimates of solutions derived. 
Section 6 delves into the weak convergence of $X^{\varepsilon}_{t}$, the procedure is similar to those in Section 5. Finally, proofs of Theorem \ref{SCRJ} and Theorem \ref{WCRD} are given in Section 7.

\section{Some settings and main results}

\subsection{Notations and assumptions}
In this section we give some notions and definitions about calculitions in $d_{i}$-dimensional Euclidean space $\mathbb{R}^{d_{i} }(d_{i}\geq 1) $, we mention that  $\mathbb{R}^{d_{1} }$ and $\mathbb{R}^{d_{2} }$ have disadjoint orthogonal basis. 
$( \cdot) $ denotes inner product. Let $(\Omega,\mathcal{F},\mathbb{P})$ be the probability space  that describes random environments, denote by $ \mathbb{E}$ the expectation with respect to the probability measure $\mathbb{P}$. Define $(a)^{+}=max\{a, 0\}$.

For any $ k\in \mathbb{N}$, $\delta\in (0,1)$, we define

$ C^{k}(\mathbb{R}^{d} )$=\{$u:\mathbb{R}^{d}\longrightarrow \mathbb{R}$: $u$ and all its partial derivatives up to order $k\geq0$ are continuous.\}

$ C^{k}_{b}(\mathbb{R}^{d} )$=\{$u\in  C^{k}(\mathbb{R}^{d})$: $u$ and its all partial derivatives up to order $k\geq0$ are bounded continuous.\}

$ C^{k+\delta}_{b}(\mathbb{R}^{d} )$=\{$u\in  C^{k}_{b}(\mathbb{R}^{d})$: $u$ and its all  partial derivatives up to order $k\geq0$ are $\delta$-H\"{o}lder continuous.\}

Then $ C_b^{k+\de}\subset C_b$ for $k\geq0, \de\in(0,1)$. The spaces $ C^{k}_{b}$, $C^{k+\delta}_{b}$  equipped with $\Arrowvert \cdot\Arrowvert_{C^{k}_{b}}$ and $\Arrowvert \cdot\Arrowvert_{C^{k+\delta}_{b}}$ are Banach spaces. We emphasize that $u\in C^{k_{1}+\delta_{1},k_{2}+\delta_{2}}_{b}(\mathbb{R}^{d} )$ means that: 
(i). For $0<|\beta_{1}|<k_{1}$,  $0<|\beta_{2}|<k_{2}$, $\partial_{x}^{\beta_{1}}\partial_{y}^{\beta_{2}}u$ is bounded continuous; (ii).  $\partial_{x}^{k_{1}}$ is $\delta_{1}$-H\"{o}lder continuous with respect to $x$ uniformly in $y$,   $\partial_{y}^{k_{2}}$ is $\delta_{2}$-H\"{o}lder continuous with respect to $y$ uniformly in $x$. We denote that $f(\cdot,x,y)\in C_{b}^{v,\delta_{1},\delta_{2}}$ if $\forall (x,y)\in \mathbb{R}^{d_{1}+d_{2}}$, $f(\cdot,x,y)\in C_{b}^{v}(\mathbb{R_{+}})$, $f(t,\cdot,\cdot)\in C_{b}^{\delta_{1},\delta_{2}}(\mathbb{R}^{d_{1}+d_{2}})$. $X^{x,y}_{t}$  denotes the process $X_{t}$ starts from $(x,y)$. 

Additionally, we must introduce a function space $C^{\gamma,\eta,\delta}_p(\mb{R}^{+}\times\mathbb{R}^{d_{1}+d_{2}})$, $\forall \gamma, \eta, \delta\in(0,1)$,  $C^{\gamma,\eta,\delta}_p(\mb{R}^{+}\times\mathbb{R}^{d_{1}+d_{2}})$ consists of all functions which are local H\"{o}lder continuous and have polynomial growth in $y$ at most of order $1$ uniformly with respect to $t, x$.   We start with defining  $C_p(\mb{R}^{+}\times\mathbb{R}^{d_{1}+d_{2}})$, i.e., $\forall t\in \R^{+}$, $ x\in \R^{d_{1}}$,  $\exists C>0$, 
\begin{align*}
	\sup_{\substack{t\in \R^{+}}}\sup_{\substack{x\in \R^{d_{1}}}}|f(t,x,y)|\leq C (1+|y|),
\end{align*}
then for $C^{\gamma,0,0}_p(\mb{R}^{+}\times\mathbb{R}^{d_{1}+d_{2}})$, $\forall x\in \R^{d_{1}}$, $\exists C>0$,  
\begin{align*}
	\sup_{\substack{x\in \R^{d_{1}}}}|f(t_{1},x,y)-f(t_{2},x,y)|\leq C [|t_{1}-t_{2}|^{\ga}]\cdot(1+|y|),
\end{align*}
$ C^{0,\eta,0}_p(\mb{R}^{+}\times\mathbb{R}^{d_{1}+d_{2}})$is defined in a similar way, then for $ C^{0,0,\de}_p(\mb{R}^{+}\times\mathbb{R}^{d_{1}+d_{2}})$,$\forall t\in \R^{+}$, $x\in \R^{d_{1}}$,  $\exists C>0$,
\begin{align*}
	\sup_{\substack{t\in \R^{+}}}\sup_{\substack{x\in \R^{d_{1}}}}|f(t,x,y_{1})-f(t,x,y_{2})|\leq C [|y_{1}-y_{2}|^{\de}]\cdot(1+|y|),
\end{align*}
finally we give the definition of $C^{\gamma,\eta,\delta}_p(\mb{R}^{+}\times\mathbb{R}^{d_{1}+d_{2}})$, $\exists C>0$,
\begin{align}\label{cp}
	\|f(t_{1},x_{1},y_{1})-f(t_{2},x_{2},y_{2})\|_{\infty}\leq C [|t_{1}-t_{2}|^{\ga}+|x_{1}-x_{2}|^{\eta}+|y_{1}-y_{2}|^{\delta}]\cdot(1+|y_{1}|+|y_{2}|),
\end{align}
then we define the qusi-norm, 
\begin{align*}
	\|f\|_{C^{\gamma,\eta,\delta}_p}=\sup_{t_{1},t_{2}\geq0}\sup_{x_{1},x_{2}\in\mathbb{R}^{d_{1}}}\sup_{\substack{y_{1},y_{2}\in \mathbb{R}^{d_{2}}}}\dfrac{	|f(t_{1},x_{1},y_{1})-f(t_{2},x_{2},y_{2})|}{|t_{1}-t_{2}|^{\gamma}+|x_{1}-x_{2}|^\eta+|y_{1}-y_{2}|\delta},
\end{align*}
and $C^{k+\gamma,k+\eta,k+\delta}_p$=\{$u\in  C^{k,k,k}_{p}(\mb{R}^{+}\times\mathbb{R}^{d_{1}+d_{2}})$: $u$ and its all  partial derivatives up to order k are $C^{\gamma,\eta,\delta}_p$.\}
\begin{remark}
In \eqref{cp}, the order of $|y|$ on the right hand side is $1$, which is vital in our anslysis, since we require $p\in[1,\alpha_{2})$ in moment estimates of $|Y_{t}^{\varepsilon}|^{p}$ in Theorem $\ref{T32}$. Moreover, this is consistent with estimates in Lemma $\ref{L51}$, Theorem $\ref{T51}$, Theorem $\ref{T51-1}$, Theorem $\ref{T61}$,  and Theorem $\ref{T61-1}$.
\end{remark}

Define $K_{t}$ as an $\mathbb{R}_{+}$-valued $\mathcal{F}_{t}$ adapted process such that  $\forall p\in [1,\alpha_{1}\wedge\alpha_{2})$ we have
\begin{equation}\label{kt}
\alpha_{\infty}=\int_{0}^{\infty}|K_{s}|^{p}ds<\infty\ on \ \Omega,\  \mathbb{E}e^{p\alpha_{\infty}}<\infty.
\end{equation}

Throughout this paper we assume that $\nu_{1}$ and $\nu_{2}$ are symmetric L\'{e}vy measures, i.e., $$\int_{\mathbb{R}^{d_{i} }}(|z|^{2}\wedge 1)\nu_{i}(dz)<\infty, \ i=1,2.$$

Define fractional nonlocal operators in \eqref{1.1} as follows
$$ \mathcal{L}_{1}(t,x,y)u(x,y)=	-(-\Delta_{x})^{\frac{\alpha_{1}}{2}}u(x,y)+b(t,x,y)\nabla_{x}u(x,y),$$
$$ \mathcal{L}_{2}(x,y)u(x,y)=	-(-\Delta_{y})^{\frac{\alpha_{2}}{2}}u(x,y)+f(x,y)\nabla_{y}u(x,y),$$
$$\mathcal{L}_{3}(t,x,y)u(x,y)=H(t,x,y)\nabla_{x}u(x,y),$$ $$ \mathcal{L}_{4}(x,y)u(x,y)=c(x,y)\nabla_{y}u(x,y),$$
where 
$$ \mathcal{L}_{1}(t,x,y)u(x,y)=P.V.\int_{\mathbb{R}^{d_{1}}}(u(x+z,y)-u(x,y)-(z,\nabla_{x}u(x,y))I_{|z|\leq1})\nu_{1}(dz)+b(t,x,y)\nabla_{x}u(x,y),$$
here $\nu_{1}(dz)=\frac{c_{\alpha_{1},d_{1}}}{|z|^{d_{1}+\alpha_{1}}}dz$ is symmetric L\'{e}vy measure, $c_{\alpha_{1},d_{1}}>0$ is constant,  $ \mathcal{L}_{2}(x,y)u$ is defined similarly.

We next state some important conditions on coefficients.

\noindent\textbf{Dissipative condition}: $\forall x\in \mathbb{R}^{d_{1}}$, $y\in \mathbb{R}^{d_{2}}$,  $t\geq0$, $\exists C>0$,
\begin{equation}\label{2.2}
	\begin{split}
		& \sup_{x\in \mathbb{R}^{d_{1}}}f(x,0)<\infty, \ ( f(x,y_{1})-f(x,y_{2}), y_{1}-y_{2})  \leq -C_{1}|y_{1}-y_{2}|^{2},\\
		& \sup_{x\in \mathbb{R}^{d_{1}}}c(x,0)<\infty, \ ( c(x,y_{1})-c(x,y_{2}), y_{1}-y_{2})  \leq -C_{1}|y_{1}-y_{2}|^{2},
			\end{split}
	\end{equation}
		\begin{equation}\label{2.2-2}
			\begin{split}
		& \sup_{t\geq0}\sup_{y\in \mathbb{R}^{d_{2}}}b(t,0,y)<\infty, \ ( b(t,x_{1},y)-b(t,x_{2},y), x_{1}-x_{2})  \leq -C_{1}|x_{1}-x_{2}|^{2},\\
		& \sup_{t\geq0}\sup_{y\in \mathbb{R}^{d_{2}}}H(t,0,y)<\infty, \ ( H(t,x_{1},y)-H(t,x_{2},y), x_{1}-x_{2}) \leq -C_{1}|x_{1}-x_{2}|^{2},
	\end{split}
\end{equation}
\eqref{2.2} and \eqref{2.2-2} imply that, $\exists C>0$ s.t.,
\begin{equation}\label{2.4}
	\begin{split}
		&( f(x,y),y )=( f(x,y)-f(x,0), y ) + (f(x,0),y)  \leq C_{3}|y|-C_{1}|y|^{2},\\
		&( c(x,y),y )=( c(x,y)-c(x,0), y ) +( c(x,0),y ) \leq C_{3}|y|-C_{1}|y|^{2},\\
		& (b(t,x,y),x) = (b(t,x,y)-b(t,0,y), x ) + (b(t,0,y),x)  \leq C_{3}|x|-C_{1}|x|^{2},\\
		& (H(t,x,y),x) = (H(t,x,y)-H(t,0,y), x) + (H(t,0,y),x)  \leq C_{3}|x|-C_{1}|x|^{2}.
	\end{split}
\end{equation}

\begin{remark}
We apply dissipative condition  of $c(x,y)$ to the estimate $\mathbb{E}(\underset{t\in [0,T]}{\sup} |Y_{t}^{\varepsilon}|^{p})$, which is necessary for strong convergence analysis, see Lemma $\ref{L43}$.
\end{remark}

\noindent\textbf{Growth condition and boundedness condition}: for
$\forall x\in \mathbb{R}^{d_{1}}$, $y\in \mathbb{R}^{d_{2}}$,  $t\geq0$, $\exists C_{4},C_{5}>0$ s.t.,
\begin{equation}\label{2.77}
	\begin{split}	
		&|b(t,x,y)| \leq C_{4}(1+K_{t}),\  |H(t,x,y)| \leq C_{4}(1+K_{t}),\\
		&  |f(x,y)| \leq C_{5}(|x|+|y|),\ |c(x,y)|_{\infty} \leq C_{5}.
	\end{split}
\end{equation}
\begin{remark}
Due to the restrictions on the orders of moment estimates in Theorem $\ref{T32}$, \eqref{kt}, growth conditions of $b$ and $H$, with boundedness condition of $c$ are crucial to obtain \eqref{5.50},\eqref{5.51} and \eqref{5.52} respectively in Theorem $\ref{T52}$.
\end{remark}

\noindent\textbf{Lipschitz condition}: $\forall x\in \mathbb{R}^{d_{1}},\ y\in \mathbb{R}^{d_{2}}$, $t_{1},t_{2}\in [0,T],$ $\exists \theta_{1},\theta_{2},\in (0,1]$, $\exists C_{T},C_{6}>0$ s.t.,
\begin{equation}\label{2.111}
	\begin{split}
		&|b(t_{1},x_{1},y_{1})-b(t_{2},x_{2},y_{2})| \leq C_{T}(|t_{1}-t_{2}|^{\theta_{1}}+|x_{1}-x_{2}|^{\theta_{2}}+|y_{1}-y_{2}|),\\		&|H(t_{1},x_{1},y_{1})-H(t_{2},x_{2},y_{2})| \leq C_{T}(|t_{1}-t_{2}|^{\theta_{1}}+|x_{1}-x_{2}|^{\theta_{2}}+|y_{1}-y_{2}|),\\
	\end{split}
\end{equation}
\begin{equation}\label{2.112}
	\begin{split}
			&|f(x_{1},y_{1})-f(x_{2},y_{2})| \leq C_{6}(|x_{1}-x_{2}|^{\theta_{2}}+|y_{1}-y_{2}|),\\
			&|c(x_{1},y_{1})-c(x_{2},y_{2})| \leq C_{6}(|x_{1}-x_{2}|^{\theta_{2}}+|y_{1}-y_{2}|).
	\end{split}
\end{equation}

\noindent\textbf{Centering condition}: $\forall t\geq0, x\in \mathbb{R}^{d_{1}}, y\in \mathbb{R}^{d_{2}}$, we have for  $H(t,x,y)$,
\begin{equation}\label{2.13}
	\begin{split}
		\int_{\mathbb{R}^{ d_{2}}}H(t,x,y)\mu^{x}(dy)&=0,
	\end{split}
\end{equation}
here $\mu^{x}$ is invariant measures defined by \eqref{2.23}.

\subsection{Main results}
Next we state the main resluts of this paper.
\begin{theorem}\label{SCRJ}(Strong convergence rates)
	Assume that above conditions hold, let $p\in [1,\alpha_{1}\wedge\alpha_{2})$, $b(\cdot,\cdot,\cdot)\in C_{p}^{\frac{v}{\alpha_{1}},v, 2+\gamma}\cap C_{b}^{2}(\mathbb{R}^{d_{2}})$, $c(x,y)\in C_{b}^{v, 2+\gamma}$, $f(\cdot,\cdot)\in C_{b}^{v, 2+\gamma}$,  $\gamma\in (0,1)$, for any initial data $x\in \mathbb{R}^{d_{1}}$, $y\in \mathbb{R}^{d_{2}}$, $T>0$, $t\in[0,T]$, additionally assume that $	 \lim\limits_{\varepsilon \rightarrow0}\frac{\eta_{\varepsilon}^{\left[ \left( \frac{v}{\alpha_{2}}\right) \wedge \left( 1-\frac{1\vee(\alpha_{1}-v)}{\alpha_{2}}\right) \right] }}{\gamma_{\varepsilon} } =0$,   $v\in((\alpha_{1}-\alpha_{2})^{+},\alpha_{1}]$, we have:
	
	Regime 1:  $H(t,x,y)\in C_{p}^{\frac{v}{\alpha_{1}},v, 2+\gamma}\cap C_{b}^{2}(\mathbb{R}^{d_{2}})$, 
	\begin{equation}\nonumber
		\mathbb{E}\left( \sup_{t\in [0,T]}|X_{t}^{\varepsilon}-\bar{X}^{1}_{t}|^{p}\right) \leq   C_{T,p}\left(\left(  \frac{\eta_{\varepsilon}}{\gamma_{\varepsilon}\beta_{\varepsilon}}\right) ^{p}+\left(  \frac{\eta_{\varepsilon}^{1-\frac{1-(1\wedge v)}{\alpha_{2}}}}{\gamma_{\varepsilon}^{2}}\right) ^{p}+\left( \frac{\eta_{\varepsilon}^{\left[ \left( \frac{v}{\alpha_{2}}\right) \wedge \left( 1-\frac{1\vee(\alpha_{1}-v)}{\alpha_{2}}\right) \right] }}{\gamma_{\varepsilon} } \right) ^{p}\right) ,
	\end{equation}
	here 
	\begin{equation}\label{S1}
		d\bar{X}^{1}_{t}=\bar{b}(t,\bar{X}^{1}_{t})dt+dL_{t}^{1};
	\end{equation}

	Regime 2: let $H(\cdot,\cdot,\cdot)\in C_{p}^{\frac{v}{\alpha_{1}},v, 2+\gamma}\cap C_{b}^{2}(\mathbb{R}^{d_{2}})$,
	\begin{equation}\nonumber
		\mathbb{E}\left( \sup_{t\in [0,T]}|X_{t}^{\varepsilon}-\bar{X}^{2}_{t}|^{p}\right) \leq  C_{T,p}\left(                                                                                                                                                                                                                                                                           \left(  \frac{\eta_{\varepsilon}^{1-\frac{1-(1\wedge v)}{\alpha_{2}}}}{\gamma_{\varepsilon}^{2}}\right) ^{p}+ \left(  \frac{\eta_{\varepsilon}^{\left[ \left( \frac{v}{\alpha_{2}}\right) \wedge \left( 1-\frac{1\vee(\alpha_{1}-v)}{\alpha_{2}}\right) \right] }}{\gamma_{\varepsilon} } \right) ^{p}+\gamma_{\varepsilon}^{p}\right) ,
	\end{equation}
	we have 
	\begin{equation}\nonumber
		d\bar{X}^{2}_{t}=(\bar{b}(t,\bar{X}^{2}_{t})+\bar{c}(t,\bar{X}^{2}_{t}))dt+dL_{t}^{1};
	\end{equation}
	we mention that  $\bar{b}(t,x)=\int_{\mathbb{R}^{d_{2}}}b(t,x,y)\mu^{x}(dy)$, $\mu^{x}(dy)$ is the unique invariant measure for the transition semigroup of the corresponding frozen equation, 
	\begin{equation}\label{2.23}
		dY^{x,y}_{t}=f(x,Y_{t})dt+dL^{2}_{t},\ Y_{0}=y\in \mathbb{R}^{d_{2}},
	\end{equation}
	$\bar{c}(t,x)$ is defined as follows
	\begin{equation}\label{2.24}
		\bar{c}(t,x)=\int_{\mathbb{R}^{ d_{2}}}c(x,y)\nabla_{y}u(t,x,y)\mu^{x}(dy),\nonumber
	\end{equation}
	here $u(t,x,y)$ is the solution the  following nonlocal Poisson equation
	\begin{equation}\label{2.26}
		\mathcal{L}_{2}(x,y)u(t,x,y)+ H(t,x,y)=0.
	\end{equation}
\end{theorem}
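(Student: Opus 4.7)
The plan is the nonlocal corrector method: we introduce two Poisson equations on the fast space, with $t$ and $x$ as parameters, which rewrite the troublesome $\gamma_{\varepsilon}^{-1}H$ forcing and the standard averaging gap $b-\bar{b}$ as explicit $\varepsilon$-small remainders; then a Gronwall argument closes the estimate. The first corrector $u(t,x,y)$ is the solution of \eqref{2.26}; it is well-defined thanks to the centering condition \eqref{2.13} and the ergodicity of the frozen equation \eqref{2.23} established in Section 4. The second corrector $\wt{u}(t,x,y)$ solves $\mc{L}_{2}\wt{u}+(b-\bar{b})=0$ and is well-defined after subtracting the invariant mean. Both inherit their smoothness in $t,x$ from the hypotheses on $b,H$ via the regularity estimates of Section 5.

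The core computation applies It\^o's formula to $u(s,X^{\varepsilon}_{s},Y^{\varepsilon}_{s})$. The generator of $Y^{\varepsilon}$ at frozen $x$ has leading term $\eta_{\varepsilon}^{-1}\mc{L}_{2}$, and $\mc{L}_{2}u=-H$, so solving for the forcing integral produces
\begin{equation}\nonumber
\frac{1}{\gamma_{\varepsilon}}\int_{0}^{t}H\,ds=\frac{\eta_{\varepsilon}}{\gamma_{\varepsilon}}\bigl[u(0,x,y)-u(t,X^{\varepsilon}_{t},Y^{\varepsilon}_{t})\bigr]+\frac{\eta_{\varepsilon}}{\gamma_{\varepsilon}}\int_{0}^{t}[\pa_{s}u+\mc{L}_{1}u]\,ds+\frac{\eta_{\varepsilon}}{\gamma_{\varepsilon}^{2}}\int_{0}^{t}H\nabla_{x}u\,ds+\frac{\eta_{\varepsilon}}{\gamma_{\varepsilon}\beta_{\varepsilon}}\int_{0}^{t}c\nabla_{y}u\,ds+\frac{\eta_{\varepsilon}}{\gamma_{\varepsilon}}M_{t},
\end{equation}
where $M_{t}$ is a martingale built from the compensated jumps of $L^{1}$ and $L^{2}$. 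The same manipulation with $\wt{u}$ rewrites $\int_{0}^{t}[b-\bar{b}]\,ds$ with prefactor $\eta_{\varepsilon}$ in place of $\eta_{\varepsilon}/\gamma_{\varepsilon}$. Combining the two identities gives $X^{\varepsilon}_{t}-\bar{X}_{t}=\int_{0}^{t}[\bar{b}(s,X^{\varepsilon}_{s})-\bar{b}(s,\bar{X}_{s})]\,ds+\mathrm{Remainder}$, which is ready for Gronwall.

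For Regime 1 the two stated scalings force every remainder to zero and Gronwall directly produces the three-term bound. Regime 2 is the subtler case: the balance $\eta_{\varepsilon}=\gamma_{\varepsilon}\beta_{\varepsilon}$ makes $\frac{\eta_{\varepsilon}}{\gamma_{\varepsilon}\beta_{\varepsilon}}\int_{0}^{t}c\nabla_{y}u\,ds$ exactly $O(1)$ rather than vanishing, and it must itself be averaged to $\int_{0}^{t}\bar{c}(s,X^{\varepsilon}_{s})\,ds$. We handle this with a third Poisson equation $\mc{L}_{2}\wh{u}+(c\nabla_{y}u-\bar{c})=0$, which explains the upgrade $H\in C_{b}^{v/\alpha_{1},v,3+\gamma}$: forming $c\nabla_{y}u-\bar{c}$ and inverting $\mc{L}_{2}$ one more time consumes an extra order of spatial smoothness of $u$, hence of $H$. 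The residual of this third corrector is what supplies the extra $\gamma_{\varepsilon}^{p}$ term in the Regime-2 rate.

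The hardest part will be the quantitative regularity of the correctors in the parameters $t,x$, since this is what controls the deterministic integrands $\pa_{s}u$, $b\nabla_{x}u$ and $(-\Delta_{x})^{\alpha_{1}/2}u$ and what allows a sharp stable-process BDG estimate (valid for $p<\alpha_{1}\wedge\alpha_{2}$) to be applied to $M_{t}$. The composite exponent $[(v/\alpha_{2})\wedge(1-(1\vee(\alpha_{1}-v))/\alpha_{2})]$ appearing in the third rate is precisely the output of this regularity budget: its first alternative reflects the $v/\alpha_{2}$-in-time H\"older loss inherited from $\pa_{s}u$, whereas its second alternative tracks how much of the $v$-H\"older regularity of $u$ in $x$ survives after $(-\Delta_{x})^{\alpha_{1}/2}$ consumes $\alpha_{1}-v$ orders, both converted via the $\eta_{\varepsilon}^{1/\alpha_{2}}$ time scale of the fast noise. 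Once the regularity results of Section 5, the moment estimates of Theorem \ref{T32}, and the Lipschitz properties of $\bar{b}$ (and $\bar{c}$ in Regime 2) are combined, the Gronwall loop closes and delivers the stated strong convergence rates in both regimes.
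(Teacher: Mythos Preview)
Your overall strategy matches the paper's: introduce the Poisson corrector $u$ for $H$, a second one $\wt u$ for $b-\bar b$, and in Regime~2 a third one for $c\nabla_{y}u-\bar c$; rewrite the forcing via It\^o, then close with Gronwall using the Lipschitz property of $\bar b$ (and $\bar c$). The paper's Section~7.1 does exactly this, citing Theorems~\ref{T52} and~\ref{T53} for the two core estimates.

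There is, however, a real gap in your It\^o step. You write the identity as if It\^o's formula applies directly to $u(s,X^{\varepsilon}_{s},Y^{\varepsilon}_{s})$, with $\partial_{s}u$, $\nabla_{x}u$ and $(-\Delta_{x})^{\alpha_{1}/2}u$ appearing as bona fide integrands. But under the hypotheses $u\in C_{b}^{v/\alpha_{1},v,2+\gamma}$ with $v\leq\alpha_{1}$, the corrector is only $v/\alpha_{1}$-H\"older in $t$ and $v$-H\"older in $x$: for $v<1$ the time derivative $\partial_{s}u$ does not exist, and for $v<\alpha_{1}$ the fractional Laplacian $(-\Delta_{x})^{\alpha_{1}/2}u$ is not defined. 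Your last paragraph shows you sense this (``how much of the $v$-H\"older regularity survives after $(-\Delta_{x})^{\alpha_{1}/2}$ consumes $\alpha_{1}-v$ orders''), but you have not named the mechanism that makes the computation legal.

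The paper's device is to mollify $u$ in $(t,x)$ at scale $n^{-1}$ to obtain $u_{n}$ (see \eqref{5.56} and Lemma~\ref{L51}), apply It\^o's formula to $u_{n}$, and then balance two competing errors: the mollification error $\|\mathcal{L}_{2}u_{n}-\mathcal{L}_{2}u\|\lesssim n^{-v}$ against the blow-up $\|\partial_{t}u_{n}\|,\|(-\Delta_{x})^{\alpha_{1}/2}u_{n}\|\lesssim n^{\alpha_{1}-v}$ and $\|\nabla_{x}u_{n}\|\lesssim n^{1-(1\wedge v)}$. Choosing $n=\eta_{\varepsilon}^{-1/\alpha_{2}}$ is exactly what produces the composite exponent $\bigl[(v/\alpha_{2})\wedge(1-(1\vee(\alpha_{1}-v))/\alpha_{2})\bigr]$ and the factor $\eta_{\varepsilon}^{1-(1-(1\wedge v))/\alpha_{2}}/\gamma_{\varepsilon}^{2}$ in the rate. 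Without this mollify-and-optimize step the displayed It\^o identity is formal and the specific exponents have no derivation; this is the missing idea you need to insert between ``apply It\^o'' and ``collect remainders''.
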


\begin{remark}\label{R21}
The averaged equations are typically assumed to take the form as
	\begin{equation}\label{2.R1}
	d\bar{X}^{3}_{t}=(\bar{b}(t,\bar{X}^{3}_{t})+\bar{H}(t,\bar{X}^{3}_{t}))dt+dL_{t}^{1},
\end{equation}
and
\begin{equation}\label{2.R2}
	d\bar{X}^{4}_{t}=(\bar{b}(t,\bar{X}^{4}_{t})+\bar{c}(t,\bar{X}^{4}_{t})+\bar{H}(t,\bar{X}^{4}_{t}))dt+dL_{t}^{1},
\end{equation}
where \begin{equation}
	\bar{H}(t,x)=\int_{\mathbb{R}^{ d_{2}}}H(t,x,y)\nabla_{x}u(t,x,y)\mu^{x}(dy),
	\nonumber
\end{equation}
$u(t,x,y)$ is the solution of \eqref{2.26}, which necessitates the scaling conditions $ \lim\limits_{\varepsilon \rightarrow0}\dfrac{\gamma_{\varepsilon}}{\beta_{\varepsilon} }=0,\ \eta_{\varepsilon}=\gamma_{\varepsilon}^{2},$
 and $\eta_{\varepsilon}=\gamma_{\varepsilon}^{2}=\gamma_{\varepsilon}\beta_{\varepsilon}$ respectively. However, these conditions lead to contradictions with our basic  assumption  $1<\alpha_{2}<2$, a detailed discussion of this inconsistency will be provided in Remark $\ref{R51}$.
\end{remark}

\begin{remark}\label{R22}
	In contrast to strong convergence analysis in the
	$L^{p}$ norm, where martingale terms and expectation of maximal values obstruct the derivations of \eqref{2.R1} and \eqref{2.R2}, weak convergence offers following distinct advantages: 
	
	$(i).$these martingale terms associated with  $ Y^{\varepsilon}_{t}$ vanish upon taking expectation;
	
	$(ii).$instead of using $\mathbb{E}(\underset{t\in [0,T]}{\sup} |Y_{t}^{\varepsilon}|^{p})$, we may adopt the weaker estimate  $\underset{\varepsilon\in (0,1)}{\sup}\underset{t\geq0}{\sup}\mathbb{E} |Y_{t}^{\varepsilon}|^{p}$, which imposes  less stringent requirement. This substitution avoids the need to control the uniform-in-time moment bounds within the expectation, thereby broadening the applicability of the result.
	
	These advantages enable the successful derivations of the above two averaged equations in weak convergence scenarios.
\end{remark}

The following theorem is about the weak convergence rates.

\begin{theorem}\label{WCRD}(Weak convergence rates)
	Assume that above conditions hold, and   $x\in \mathbb{R}^{d_{1}}$, $y\in \mathbb{R}^{d_{2}}$,   $T>0$, $t\in[0,T]$,   $\forall \phi(x)\in C^{2+\gamma}_{b}$, 
	we have
	
	Regime 1:  $H(t,x,y)\in C_{p}^{\frac{v}{\alpha_{1}},v,2+\gamma}\cap C_{b}^{1,1+\gamma,2}$, $v\in((\alpha_{1}-\alpha_{2})^{+},\alpha_{1}]$, and  $b(\cdot,\cdot,\cdot)\in C_{p}^{\frac{v}{\alpha_{1}},v,2+\gamma}\cap C_{b}^{1,1+\gamma,2}$, $c(\cdot,\cdot)\in C_{b}^{v,2+\gamma}$, $f(\cdot,\cdot)\in C_{b}^{v,2+\gamma}$,   $\gamma\in (0,1)$,  assume additionally $ \lim\limits_{\varepsilon \rightarrow0}\frac{ \eta_{\varepsilon}^{\left[ \frac{v}{\alpha_{2}}\wedge \left( 1-\frac{\alpha_{1}-v}{\alpha_{2}} \right)\right]  }}{\gamma_{\varepsilon}}=0$, 
	\begin{equation}\nonumber
		\sup_{t\in [0,T]}|\mathbb{E}\phi(X_{t}^{\varepsilon})-\mathbb{E}\phi(\bar{X}^{1}_{t})|\leq C_{T,x,y} \left( \frac{ \eta_{\varepsilon}^{\left[ \frac{v}{\alpha_{2}}\wedge \left( 1-\frac{\alpha_{1}-v}{\alpha_{2}} \right)  \right]  }}{\gamma_{\varepsilon}}+\frac{ \eta_{\varepsilon}^{1-\frac{1-(1\wedge v)}{\alpha_{2}}}}{\gamma^{2}_{\varepsilon}}+\frac{\eta_{\varepsilon}}{\gamma_{\varepsilon}\beta_{\varepsilon}}\right) ,
	\end{equation}
	here
	\begin{equation}\nonumber
		d\bar{X}^{1}_{t}=\bar{b}(t,\bar{X}^{1}_{t})dt+dL_{t}^{1};
	\end{equation}

	Regime 2: $H(t,x,y)\in C_{p}^{\frac{v}{\alpha_{1}},v,2+\gamma}\cap C_{b}^{1,1+\gamma,2}$, $v\in((\alpha_{1}-\alpha_{2})^{+},\alpha_{1}]$, and  $b(\cdot,\cdot,\cdot)\in C_{p}^{\frac{v}{\alpha_{1}},v,2+\gamma}\cap C_{b}^{1,1+\gamma,2}$, $c(\cdot,\cdot)\in C_{b}^{v,2+\gamma}$, $f(\cdot,\cdot)\in C_{b}^{v,2+\gamma}$,   $\gamma\in (0,1)$,  we  further suppose that  $ \lim\limits_{\varepsilon \rightarrow0}\frac{ \eta_{\varepsilon}^{\left[ \frac{v}{\alpha_{2}}\wedge \left( 1-\frac{\alpha_{1}-v}{\alpha_{2}} \right)  \right]  }}{\gamma_{\varepsilon}}=0$,  
	\begin{equation}\nonumber
		\sup_{t\in [0,T]}|\mathbb{E}\phi(X_{t}^{\varepsilon})-\mathbb{E}\phi(\bar{X}^{2}_{t})|\leq  C_{T,x,y}\left(\frac{ \eta_{\varepsilon}^{\left[ \frac{v}{\alpha_{2}}\wedge \left( 1-\frac{\alpha_{1}-v}{\alpha_{2}} \right)  \right]  }}{\gamma_{\varepsilon}}+\frac{ \eta_{\varepsilon}^{1-\frac{1-(1\wedge v)}{\alpha_{2}}}}{\gamma^{2}_{\varepsilon}}+\gamma_{\varepsilon}\right)  ,
	\end{equation}
	and 
	\begin{equation}\nonumber
		d\bar{X}^{2}_{t}=(\bar{b}(t,\bar{X}^{2}_{t})+\bar{c}(t,\bar{X}^{2}_{t}))dt+dL_{t}^{1};
	\end{equation}
	
	Regime 3:  $H(t,x,y)\in C_{p}^{\frac{v}{\alpha_{1}},2+\gamma,2+\gamma}\cap C_{b}^{1,2+\gamma,2+\gamma}$, $v\in ( \frac{\alpha_{2}}{2}\vee \frac{2\alpha_{1}-\alpha_{2}}{2}  ,\alpha_{1}]$, and specially we have $b(\cdot,\cdot,\cdot)\in C_{p}^{\frac{v}{\alpha_{1}},v,2+\gamma}\cap C_{b}^{1,1+\gamma,2+\gamma}$, $c(\cdot,\cdot)\in C_{b}^{1+\gamma,2+\gamma}$, $f(\cdot,\cdot)\in C_{b}^{1+\gamma,2+\gamma}$,   $\gamma\in (\alpha_{1}-1,1)$, 
	\begin{equation}\nonumber
		\sup_{t\in [0,T]}|\mathbb{E}\phi(X_{t}^{\varepsilon})-\mathbb{E}\phi(\bar{X}^{3}_{t})| \leq  C_{T,x,y}\left( \gamma_{\varepsilon}^{\frac{2v}{\alpha_{2}}-\left[ 1\vee \left( \frac{2\alpha_{1}}{\alpha_{2}}-1\right)   \right] }+\frac{\gamma_{\varepsilon}}{\beta_{\varepsilon}}\right)  ,
	\end{equation}
	here
	\begin{equation}\nonumber
		d\bar{X}^{3}_{t}=(\bar{b}(t,\bar{X}^{3}_{t})+\bar{H}(t,\bar{X}^{3}_{t}))dt+dL_{t}^{1};
	\end{equation}

	Regime 4:  $H(t,x,y)\in C_{p}^{\frac{v}{\alpha_{1}},2+\gamma,2+\gamma}\cap C_{b}^{1,2+\gamma,2+\gamma}$,  $v\in ( \frac{\alpha_{2}}{2}\vee \frac{2\alpha_{1}-\alpha_{2}}{2}  ,\alpha_{1}]$, and  $b(\cdot,\cdot,\cdot)\in C_{p}^{\frac{v}{\alpha_{1}},v,2+\gamma}\cap C_{b}^{1,1+\gamma,2+\gamma}$, $c(\cdot,\cdot)\in C_{b}^{1+\gamma,2+\gamma}$, $f(\cdot,\cdot)\in C_{b}^{1+\gamma,2+\gamma}$,   $\gamma\in (\alpha_{1}-1,1)$, 
	\begin{equation}\nonumber
		\begin{split}
			\sup_{t\in [0,T]}|\mathbb{E}\phi(X_{t}^{\varepsilon})-\mathbb{E}\phi(\bar{X}^{4}_{t})|\leq C_{T,x,y}\cdot \gamma_{\varepsilon}^{\frac{2v}{\alpha_{2}}-\left[ 1\vee \left( \frac{2\alpha_{1}}{\alpha_{2}}-1\right)   \right] },
		\end{split}
	\end{equation}
	in this case
	\begin{equation}\nonumber
		d\bar{X}^{4}_{t}=(\bar{b}(t,\bar{X}^{4}_{t})+\bar{c}(t,\bar{X}^{4}_{t})+\bar{H}(t,\bar{X}^{4}_{t}))dt+dL_{t}^{1},
	\end{equation} 
	here we have  $\bar{b}(t,x)=\int_{\mathbb{R}^{d_{2}}}b(t,x,y)\mu^{x}(dy)$, $\mu^{x}(dy)$ is the unique invariant measure for the transition semigroup of the frozen equation $Y^{x,y}_{t}$ in \eqref{2.23}.
	$\bar{c}(t,x)$, $\bar{H}(t,x)$ are defined as follows
	\begin{equation}\label{2.36}
			\bar{c}(t,x)=\int_{\mathbb{R}^{ d_{2}}}c(x,y)\nabla_{y}\Phi(t,x,y)\mu^{x}(dy),
	\end{equation}
	\begin{equation}\label{2.37}
\bar{H}(t,x)=\int_{\mathbb{R}^{ d_{2}}}H(t,x,y)\nabla_{x}\Phi(t,x,y)\mu^{x}(dy),
	\end{equation}
	here $\Phi(t,x,y)$ is the solution the  following nonlocal Poisson equation
	\begin{equation}\label{2.26-2}
		\mathcal{L}_{2}(x,y)\Phi_{}(t,x,y)+H(t,x,y)=0.
	\end{equation}
\end{theorem}

\begin{remark}
	Exponential ergodicity, regularity estimates rely on H\"{o}lder regularity and boundedness of the coefficients $b(t,x,y)$ and $H(t,x,y)$ in $C^{\gamma,\eta,\delta}_b$ spaces, see Lemma $\ref{L42}$, Theorem $\ref{T51}$, Theorem $\ref{T51-1}$, Theorem $\ref{T61}$ and Theorem $\ref{T61-1}$ for details, while molification estimates need we employ $C^{\gamma,\eta,\delta}_p$ spcaes in Lemma $\ref{L51}$, so we must take intersections of $C^{\gamma,\eta,\delta}_p$ and $C^{\gamma,\eta,\delta}_b$.
\end{remark}

\begin{remark}
We may consider the weak convergence for diffusive scaling when  $\alpha_{2}=2$ and $\eta_{\varepsilon}=\varepsilon$, inspiring us to  employ the ``corrector equation" from homogenization theory to eliminate the difference between $X_{t}^{\varepsilon}$ and averaged equation driven by Brownian process,  however, this cannot be solved in our method due to some essential issues, we will explain it in Remark $\ref{R61}$.
\end{remark}

\section{Well-posedness and some moment estimates of $(X_{t}^{\varepsilon},Y_{t}^{\varepsilon})$ }

Recall that $ L_{t}^{i},\ i=1,2,$ denote the isotropic $\alpha$-stable processes associated with $X_{t}^{\varepsilon}$ and $Y_{t}^{\varepsilon}$ respectively, the corresponding Poission random measures are defined by \cite{DA},
$$ N^{i}(t,A)=\sum_{s\leq t}1_{A}(L_{s}^{i}-L_{s-}^{i}),\ \forall A\in \mathcal{B}(\mathbb{R}^{d_{i} }),$$
then compensated Poisson measures will be 
$$ \tilde{N}^{i}(t,A)=N^{i}(t,A)-t\nu_{i}(A),$$
where $\nu_{i}(dz)=\frac{c_{\alpha_{i},d_{i}}}{|z|^{d_{i}+\alpha_{i}}}dz$ is symmetric L\'{e}vy measure, $c_{\alpha_{i},d_{i}}>0$ is constant. By L\'{e}vy-It\^{o} decomposition and symmetry of $\nu_{i}(dz)$, we have
\begin{equation}\label{3.1}
	 L^{i}_{t}=\int_{|z|\leq1}z \tilde{N}^{i}(t,dz)+\int_{|z|>1}zN^{i}(t,dz),
\end{equation}	 
so \eqref{1.1} with initial data  $X^{\varepsilon}_{0}=x\in \mathbb{R}^{d_{1} }$, $ Y^{\varepsilon}_{0}=y\in \mathbb{R}^{d_{2} }$ can be rewritten in Poisson processes form as 
\begin{equation}\label{3.2}
	\left\{
\begin{aligned}
	&dX_{t}^{\varepsilon}=b(t,X_{t}^{\varepsilon},Y_{t}^{\varepsilon})dt+\frac{1}{\gamma_{\varepsilon}}H(t,X_{t}^{\varepsilon},Y_{t}^{\varepsilon})dt+\int_{|z|\leq1}z \tilde{N}^{1}(dt,dz)+\int_{|z|>1}zN^{1}(dt,dz),\\
	&dY_{t}^{\varepsilon}=\frac{1}{\eta_{\varepsilon}}f(X_{t}^{\varepsilon},Y_{t}^{\varepsilon})dt+\frac{1}{\beta_{\varepsilon}}c(X_{t}^{\varepsilon},Y_{t}^{\varepsilon})dt+\frac{1}{\eta_{\varepsilon}^{\frac{1}{\alpha_{2}}}}\left( \int_{|z|\leq1}z \tilde{N}^{2}(dt,dz)+\int_{|z|>1}zN^{2}(dt,dz)\right).
\end{aligned}
\right.
\end{equation}

\begin{theorem}\label{T31}
(Well-posedness of \eqref{1.1}) Assume that above conditions hold, $\forall \varepsilon >0$, given any initial data $x\in \mathbb{R}^{d_{1} }$, $y \in \mathbb{R}^{d_{2} }$, there exists unique solution $(X_{t}^{\varepsilon},Y_{t}^{\varepsilon})$ to \eqref{1.1}.
\end{theorem}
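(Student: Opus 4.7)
The plan is to prove strong existence and pathwise uniqueness for the coupled system \eqref{1.1} by combining classical SDE theory for Lipschitz-in-$y$, H\"{o}lder-in-$x$ coefficients with the regularization-by-noise effect of the $\alpha$-stable driver $L^{1}_{t}$. Under \eqref{2.77}--\eqref{2.112}, the coefficients are Lipschitz in $y$ but only $\theta_{2}$-H\"{o}lder in $x$, so the natural route is to first obtain \emph{a priori} moment bounds that prevent finite-time explosion, then establish pathwise uniqueness, and finally deduce strong existence via a Yamada--Watanabe type argument.

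For the \emph{a priori} estimates, I would apply It\^{o}'s formula for L\'{e}vy-driven SDEs to $|X_{t}^{\varepsilon}|^{p}+|Y_{t}^{\varepsilon}|^{p}$ with $p\in[1,\alpha_{1}\wedge\alpha_{2})$, use the dissipative inequalities \eqref{2.4} together with the linear growth \eqref{2.77}, Kunita and BDG-type inequalities to control the jump martingales, and invoke Gronwall's inequality. The dissipativity of $f$ and $c$ is essential here since the $Y^{\varepsilon}$-drifts carry the singular prefactors $1/\eta_{\varepsilon}$ and $1/\beta_{\varepsilon}$; nevertheless the negative-definite contributions dominate and yield moment bounds that are uniform on $[0,T]$, precluding explosion and foreshadowing the sharper estimates of Theorem \ref{T32}.

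For existence, I would construct mollified coefficients $b_{n},H_{n},f_{n},c_{n}$ that are globally Lipschitz in $(x,y)$. Standard Picard iteration delivers a unique strong solution $(X^{\varepsilon,n},Y^{\varepsilon,n})$ for each $n$, and the uniform moment bounds from the previous step furnish tightness of their laws on the Skorokhod space $D([0,T];\mathbb{R}^{d_{1}+d_{2}})$; any limit point is a weak solution of \eqref{1.1}. For pathwise uniqueness, given two solutions driven by the same noises with a common initial condition, I would apply It\^{o}'s formula to a suitable convex function of the difference $(X^{\varepsilon,1}-X^{\varepsilon,2},Y^{\varepsilon,1}-Y^{\varepsilon,2})$. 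The $Y$-component closes via Lipschitz-in-$y$ and \eqref{2.2}, while the $X$-component, with its merely H\"{o}lder dependence on $x$, is handled by invoking strong well-posedness for additive $\alpha$-stable SDEs with H\"{o}lder drifts (Priola-type regularization by noise). A Yamada--Watanabe argument for L\'{e}vy-driven SDEs then upgrades weak existence plus pathwise uniqueness into a unique strong solution.

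The main obstacle is the $X$-equation with its $\theta_{2}$-H\"{o}lder-only drift: without noise regularization, a direct Gronwall argument in the $X$-variable cannot be closed, since differences like $|b(t,X^{1},Y)-b(t,X^{2},Y)|\leq C|X^{1}-X^{2}|^{\theta_{2}}$ are sub-linear. This is precisely where the additive $\alpha$-stable noise $L^{1}_{t}$ plays a decisive role, effectively lifting the H\"{o}lder drift into the well-posed regime, subject to the standard compatibility $\theta_{2}>1-\alpha_{1}/2$, which is a mild constraint since $\alpha_{1}\in(1,2)$.
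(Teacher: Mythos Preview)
The paper's own proof is a one-sentence citation: it invokes Theorems 6.2.3 and 6.2.9 of Applebaum \cite{DA} under ``Lipschitz conditions, growth conditions,'' treating the coupled system as a standard Lipschitz SDE driven by L\'evy noise. No moment bounds, no mollification, no regularization-by-noise argument appears; the well-posedness is simply deferred to the textbook.

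Your proposal is considerably more elaborate and, in one respect, more honest: you correctly observe that \eqref{2.111}--\eqref{2.112} only give $\theta_{2}$-H\"older continuity in $x$, not Lipschitz, so a bare citation to Lipschitz theory does not literally cover the stated generality. Addressing this via Priola-type regularization by the additive $\alpha_{1}$-stable noise is the right instinct. Two caveats, however. First, the compatibility constraint $\theta_{2}>1-\alpha_{1}/2$ you invoke is an extra hypothesis not among the paper's standing assumptions, so you would be proving a slightly narrower statement. Second, for the \emph{coupled} system the regularization argument is more delicate than in the scalar case: the H\"older-in-$x$ dependence of $f$ and $c$ sits in the $Y$-equation, which is driven by $L^{2}_{t}$, not $L^{1}_{t}$; one typically performs a Zvonkin transform on the $X$-component alone and then closes a Gronwall estimate on the $Y$-difference using Lipschitz-in-$y$, rather than treating both components symmetrically. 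In practice the paper's main theorems (Theorems \ref{SCRJ}--\ref{WCRD}) impose $C_{b}^{v,2+\gamma}$ or higher regularity in $x$ on all coefficients, so the coefficients are in fact Lipschitz in $x$ where it matters, and the paper's citation is adequate for its actual purposes; your program is technically sounder for the stated generality of Theorem \ref{T31} but goes well beyond what the paper does or needs.
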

Under Lipschitz conditions, growth conditions of $b$ , $f$, $H$ and boundedness condition of $c$,	well-posedness of \eqref{3.2} can be  established following the same procedures outlined in  \cite[Theorem 6.2.3, Theorem 6.2.9, Theorem 6.2.11]{DA}, which leads to well-posedness of \eqref{1.1}.
\begin{theorem}\label{T32}
	For any solution $(X_{t}^{\varepsilon},Y_{t}^{\varepsilon})$ to \eqref{1.1}, $\forall p\in [1,\alpha_{1}\wedge\alpha_{2})$,  $t\geq0$, $\exists C_{p}>0$ s.t.,
	\begin{equation}\label{3.4}
		\sup_{\varepsilon\in (0,1)}\sup_{t\geq0} \mathbb{E}|X_{t}^{\varepsilon}|^{p}\leq C_{p}(1+|x|^{p}),
	\end{equation}
	\begin{equation}\label{3.6}
		\sup_{\varepsilon\in (0,1)}\sup_{t\geq0}\mathbb{E} |Y_{t}^{\varepsilon}|^{p}\leq  C_{p}(1+|y|^{p}).
	\end{equation}
\end{theorem}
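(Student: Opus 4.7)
The plan is to prove the two bounds separately: the $Y$-estimate relies solely on the dissipative structure of the fast dynamics, while the $X$-estimate must exploit the centering condition \eqref{2.13} via a Poisson corrector to absorb the singular drift $\gamma_\varepsilon^{-1}H$.

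For \eqref{3.6}, I would apply It\^o's formula to the smooth function $V(y)=(1+|y|^2)^{p/2}$, for which $|\nabla V(y)|\leq C(1+|y|)^{p-1}$ and $|D^2V(y)|\leq C(1+|y|)^{p-2}$. The drift contribution $\eta_\varepsilon^{-1}\nabla V\cdot f+\beta_\varepsilon^{-1}\nabla V\cdot c$ is dominated, via \eqref{2.4}, by $C(\eta_\varepsilon^{-1}+\beta_\varepsilon^{-1})-c(\eta_\varepsilon^{-1}+\beta_\varepsilon^{-1})V(y)$ modulo lower order. The jump contribution, after the self-similarity rescaling $\nu_2(\eta_\varepsilon^{1/\alpha_2}du)=\eta_\varepsilon^{-1}\nu_2(du)$ (using symmetry of $\nu_2$ to replace the rescaled cutoff with $|u|\leq 1$ at no cost), reduces to $\eta_\varepsilon^{-1}$ times the fractional Laplacian applied to $V$, which for $p<\alpha_2$ is bounded in absolute value by $C(1+|y|)^{p-1}$ (Taylor on small jumps uses $\int_{|z|\leq 1}|z|^2\nu_2(dz)<\infty$; large jumps use $\int_{|z|>1}|z|^p\nu_2(dz)<\infty$ together with $\int_{|z|>1}|z|\nu_2(dz)<\infty$ since $\alpha_2>1$). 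Young's inequality absorbs the $(1+|y|)^{p-1}$ into $V(y)$, giving an ODE $\tfrac{d}{dt}\mathbb{E}V(Y_t^\varepsilon)\leq-\lambda_\varepsilon\mathbb{E}V+K_\varepsilon$ with $K_\varepsilon/\lambda_\varepsilon$ uniformly bounded in $\varepsilon$, and Gronwall yields \eqref{3.6}.

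For \eqref{3.4} the naive argument fails because of the $\gamma_\varepsilon^{-1}$ factor in front of $H$. My plan is to introduce the corrector $\Xi(t,x,y)$ solving the nonlocal Poisson equation $\mathcal{L}_2(x,y)\Xi(t,x,y)+\nabla V(x)\cdot H(t,x,y)=0$, which is solvable because the centering condition $\int H(t,x,y)\mu^x(dy)=0$ (together with ergodicity of the frozen process $Y^{x,y}$) forces the source to have zero $\mu^x$-mean. Applying It\^o's formula to $V(X_t^\varepsilon)+\tfrac{\eta_\varepsilon}{\gamma_\varepsilon}\Xi(t,X_t^\varepsilon,Y_t^\varepsilon)$, the singular $\gamma_\varepsilon^{-1}\nabla V\cdot H$ in $dV(X_t^\varepsilon)$ is exactly cancelled by $\tfrac{\eta_\varepsilon}{\gamma_\varepsilon}\cdot\eta_\varepsilon^{-1}\mathcal{L}_2\Xi=-\gamma_\varepsilon^{-1}\nabla V\cdot H$ coming from $d\Xi$. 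What survives is the non-singular drift $\nabla V\cdot b$ (controlled by the linear growth of $b$, the $Y$-moment estimate just proved, and Young), the bounded jump-part action $-(-\Delta_x)^{\alpha_1/2}V$, and the auxiliary terms $\tfrac{\eta_\varepsilon}{\gamma_\varepsilon}(\partial_t\Xi+\mathcal{L}_1\Xi+\beta_\varepsilon^{-1}c\cdot\nabla_y\Xi)+\tfrac{\eta_\varepsilon}{\gamma_\varepsilon^2}H\cdot\nabla_x\Xi$, which are controlled using the regularity of $\Xi$ and the moment bound on $Y$. Taking expectation and applying Gronwall closes the estimate and produces \eqref{3.4}.

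The hard part will be verifying the $\varepsilon$-uniform regularity of the corrector $\Xi$: one needs $\Xi,\partial_t\Xi,\nabla_x\Xi,\nabla_y\Xi$ bounded with bounds independent of $\varepsilon$, plus a bounded action of the nonlocal operator $-(-\Delta_x)^{\alpha_1/2}$ on $\Xi$, so that the residual coefficients $\tfrac{\eta_\varepsilon}{\gamma_\varepsilon},\tfrac{\eta_\varepsilon}{\gamma_\varepsilon^2},\tfrac{\eta_\varepsilon}{\gamma_\varepsilon\beta_\varepsilon}$ can be absorbed under the paper's scaling assumptions (notably $\eta_\varepsilon/\beta_\varepsilon<1$). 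These estimates for $\Xi$ mirror the ones to be developed in Section 5 for the Poisson corrector $u$, and rest on dissipativity of $f$, ergodicity of the frozen dynamics, and the $C_b$-regularity of $H$. A secondary but standard technical point, the cutoff-vs-compensator discrepancy for the rescaled $\alpha_2$-stable jumps in It\^o's formula, is dispatched by the symmetry of $\nu_2$.
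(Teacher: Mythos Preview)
Your treatment of the $Y$-bound \eqref{3.6} matches the paper's: It\^o on $(1+|y|^2)^{p/2}$, dissipativity of $f$ and $c$ from \eqref{2.4}, self-similar rescaling of $\nu_2$, and Gronwall.

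For the $X$-bound \eqref{3.4} your plan diverges substantially from the paper, and your premise that ``the naive argument fails because of the $\gamma_\varepsilon^{-1}$ factor'' is incorrect. The dissipative hypothesis \eqref{2.2-2} on $b$ and $H$ is intended (and is used in the paper's proof, see \eqref{3.10} and \eqref{3.13}) as dissipativity in the $x$-variable, yielding $(H(t,x,y),x)\leq C-C_1|x|^2$ uniformly in $t,y$. Applying It\^o directly to $U(t,x)=e^{-\frac{p}{2}\alpha_t}(1+|x|^2)^{p/2}$ then gives
\[
\frac{1}{\gamma_\varepsilon}\,\mathbb{E}\big(H(t,X_t^\varepsilon,Y_t^\varepsilon),\nabla_x U\big)\ \leq\ \frac{C_p}{\gamma_\varepsilon}-\frac{C_p}{\gamma_\varepsilon}\,\mathbb{E}U(t,X_t^\varepsilon),
\]
so the singular $\gamma_\varepsilon^{-1}$ appears \emph{symmetrically} as a large restoring coefficient and a large constant; their ratio is $O(1)$ and Gronwall delivers the $\varepsilon$-uniform equilibrium bound. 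No corrector is needed.

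Your corrector route has two concrete problems. First, the residual terms after cancellation carry the coefficients $\eta_\varepsilon/\gamma_\varepsilon^2$ and $\eta_\varepsilon/(\gamma_\varepsilon\beta_\varepsilon)$, whose smallness is a \emph{regime} hypothesis of Theorems~\ref{SCRJ}--\ref{WCRD}, not of Theorem~\ref{T32}; you would be importing assumptions the statement does not make. Second, you propose to control $\nabla V\cdot b$ via ``linear growth of $b$ and Young'', but this only yields $\tfrac{d}{dt}\mathbb{E}V\leq C(1+\mathbb{E}V)$ with no negative term, hence at best an $e^{CT}$ bound and not the uniform $\sup_{t\geq 0}$ asserted in \eqref{3.4}; a negative coefficient must come from dissipativity in $x$, which is exactly what the paper exploits. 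The Poisson-corrector machinery is the right tool in Section~5, where one extracts a \emph{rate}; for the a priori moment bound the dissipative structure alone suffices and is what the paper actually uses.
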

\begin{proof}
	Our methods are based on \cite{LRSX} and \cite{SXX}.	We observe that for $X_{t}^{\varepsilon} $,
	\begin{equation*}
			X_{t}^{\varepsilon}=x+\int_{0}^{t}b(s,X_{s}^{\varepsilon},Y_{s}^{\varepsilon})ds+\int_{0}^{t}\left( \int_{|z|\leq 1}z \tilde{N}^{1}(ds,dz)+\int_{|z|> 1}zN^{1}(ds,dz)\right)+\int_{0}^{t}\frac{1}{\gamma_{\varepsilon}}H(s,X_{s}^{\varepsilon},Y_{s}^{\varepsilon})ds,
	\end{equation*}
due to the fact that $p<\alpha_{1}\wedge\alpha_{2}<2$, here we can not use It\^{o} formula directly, however,  we observe that $ |x|^{2\cdot \frac{p}{2}}<(|x|+1)^{2\cdot \frac{p}{2}}<(|x|^{2}+1)^{\frac{p}{2}}$,  $ |y|^{2\cdot \frac{p}{2}}<(|y|+1)^{2\cdot \frac{p}{2}}<(|y|^{2}+1)^{\frac{p}{2}}$, so we define 
	\begin{equation}
		\begin{split}
			U(t,x)&=e^{-\frac{p}{2}\alpha_{t}}(|x|^{2}+1)^{\frac{p}{2}},\	U(y)=(|y|^{2}+1)^{\frac{p}{2}},
		\end{split}\nonumber
	\end{equation}
	we can see that  $U(t,x)>0$, $U(y)>0$, and 
	\begin{equation}\label{du}
		\begin{split}
			&|DU(t,x)|=\left| e^{-\frac{p}{2}\alpha_{t}} \frac{px}{(|x|^{2}+1)^{1-\frac{p}{2}}}\right|\leq C_{p} e^{-\frac{p}{2}\alpha_{t}}|x|^{p-1},\\
			& |DU(y)|=\left|  \frac{py}{(|y|^{2}+1)^{1-\frac{p}{2}}}\right|\leq C_{p} |y|^{p-1},
				\end{split}
		\end{equation}
	\begin{equation}\label{d2u}
	\begin{split}
			&|D^{2}U(t,x)|=\left| e^{-\frac{p}{2}\alpha_{t}}\left(  \frac{pI_{d_{2}\times d_{2} }}{(|x|^{2}+1)^{1-\frac{p}{2}}}-\frac{p(p-2)x\otimes x}{(|x|^{2}+1)^{2-\frac{p}{2}}}\right) \right|\leq \frac{C_{p}e^{-\frac{p}{2}\alpha_{t}}}{(|x|^{2}+1)^{1-\frac{p}{2}}}\leq C_{p}e^{-\frac{p}{2}\alpha_{t}},\\
			&|D^{2}U(y)|=\left|   \frac{pI_{d_{2}\times d_{2} }}{(|y|^{2}+1)^{1-\frac{p}{2}}}-\frac{p(p-2)y\otimes y}{(|y|^{2}+1)^{2-\frac{p}{2}}} \right|\leq \frac{C_{p}}{(|y|^{2}+1)^{1-\frac{p}{2}}}\leq C_{p}.
		\end{split}
	\end{equation}
	
	Applying It\^{o} formula, and taking expectation on both sides, with the fact that $\mathbb{E}\tilde{N}^{1}(ds,dz)=0$,
	\begin{equation}\label{3.9}
		\begin{split}
			&	\frac{d\mathbb{E}U(t,X_{t}^{\varepsilon})}{dt}=-\frac{p}{2}\mathbb{E}K_{t}U(t,X_{t}^{\varepsilon})+\mathbb{E}( b(t,X_{t}^{\varepsilon},Y_{t}^{\varepsilon}),DU(t,X_{t}^{\varepsilon})) \\
			&+\mathbb{E} \int_{|z|\leq 1}\left( U(t,X_{t}^{\varepsilon}+z)-U(t,X_{t}^{\varepsilon})-( DU(t,X_{t}^{\varepsilon}), z)  \right) \nu_{1}(dz)\\
			&+\mathbb{E} \int_{|z|> 1}\left( U(t,X_{t}^{\varepsilon}+z)-U(t,Y_{t}^{\varepsilon}) \right) \nu_{1}(dz)+\mathbb{E} \frac{1}{\gamma_{\varepsilon}}( H(t,X_{t}^{\varepsilon},Y_{t}^{\varepsilon}),DU(t,Y_{t}^{\varepsilon})) \\
			&\leq \mathbb{E} (b(t,X_{t}^{\varepsilon},Y_{t}^{\varepsilon}),DU(t,X_{t}^{\varepsilon})) +\mathbb{E} \int_{|z|> 1}\left( U(t,X_{t}^{\varepsilon}+z)-U(t,X_{t}^{\varepsilon}) \right) \nu_{1}(dz)\\
			&+\mathbb{E} \int_{|z|\leq 1}\left( U(t,X_{t}^{\varepsilon}+z)-U(t,X_{t}^{\varepsilon})- (DU(t,X_{t}^{\varepsilon}), z)  \right) \nu_{1}(dz)\\
			&+\mathbb{E} \frac{1}{\gamma_{\varepsilon}} (H(t,X_{t}^{\varepsilon},Y_{t}^{\varepsilon}),DU(t,Y_{t}^{\varepsilon}))=I_{1}+I_{2}+I_{3}+I_{4}.
		\end{split}
	\end{equation}
	
	For $I_{1}$, by dissipative condition  \eqref{2.2-2},  \eqref{2.4},
	\begin{equation}\label{3.10}
		\begin{split}
			&I_{1}=\mathbb{E} ( b(t,X_{t}^{\varepsilon},Y_{t}^{\varepsilon}),DU(t,X_{t}^{\varepsilon}))\\
			& \leq \mathbb{E} e^{-\frac{p}{2}\alpha_{t}} \frac{( b(t,X_{t}^{\varepsilon},Y_{t}^{\varepsilon})-b(t,0,Y_{t}^{\varepsilon}),pX_{t}^{\varepsilon}) +( b(t,0,Y_{t}^{\varepsilon}),pX_{t}^{\varepsilon})}{(|X_{t}^{\varepsilon}|^{2}+1)^{1-\frac{p}{2}}}\\
			&\leq C_{p}\mathbb{E}e^{-\frac{p}{2}\alpha_{t}} \frac{C_{3}|X_{t}^{\varepsilon}|-C_{1}|X_{t}^{\varepsilon}|^{2}}{(|X_{t}^{\varepsilon}|^{2}+1)^{1-\frac{p}{2}}}\leq C_{p,C_{B}} \mathbb{E}\left( 1-(|X_{t}^{\varepsilon}|^{2}+1)^{\frac{p}{2}}\right)=C_{p}-C_{p}\mathbb{E}U(t,X_{t}^{\varepsilon}), 
		\end{split}
	\end{equation}
	thus for $I_{2}$, by \eqref{du}, and young inequality,
	\begin{equation}\label{3.11}
		\begin{split}
		&I_{2}=\mathbb{E} \int_{|z|> 1}\left( U(t,X_{t}^{\varepsilon}+z)-U(t,X_{t}^{\varepsilon}) \right) \nu_{1}(dz)\\
	& \leq C_{p}\mathbb{E}e^{-\frac{p}{2}\alpha_{t}}\int_{|z|> 1}\left(|X_{t}^{\varepsilon}|^{p-1}+|z|^{p-1} \right) \nu_{1}(dz) \leq C_{p} +C_{p}\mathbb{E}U(t,X_{t}^{\varepsilon}),
		\end{split}
	\end{equation}
	we derive the last inequality from $1\leq p<\alpha$ and H\"{o}lder inequality. Similarly, 
	\begin{equation}\label{3.12}
		\begin{split}
			&I_{3}=\mathbb{E} \int_{|z|\leq 1}\left( U(t,X_{t}^{\varepsilon}+z)-U(t,X_{t}^{\varepsilon})-( DU(t,X_{t}^{\varepsilon}),z )\right)  \nu_{1}(dz)\leq C_{p},
		\end{split}
	\end{equation}
	and for $I_{4}$,  by  \eqref{2.2-2},  \eqref{2.4},
	\begin{equation}\label{3.13}
		\begin{split}
			&I_{4}=\mathbb{E} \frac{1}{\gamma_{\varepsilon}}( H(t,X_{t}^{\varepsilon},Y_{t}^{\varepsilon}),DU(t,X_{t}^{\varepsilon}))\\
			& \leq \frac{1}{\gamma_{\varepsilon}} \mathbb{E} e^{-\frac{p}{2}\alpha_{t}} \frac{( H(t,X_{t}^{\varepsilon},Y_{t}^{\varepsilon})-H(t,0,Y_{t}^{\varepsilon}),pX_{t}^{\varepsilon}) +( H(t,0,Y_{t}^{\varepsilon}),pX_{t}^{\varepsilon})}{(|Y_{t}^{\varepsilon}|^{2}+1)^{1-\frac{p}{2}}}\\
			&\leq  \frac{C_{p}}{\gamma_{\varepsilon}}\mathbb{E}e^{-\frac{p}{2}\alpha_{t}} \frac{C_{3}|X_{t}^{\varepsilon}|-|X_{t}^{\varepsilon}|^{2}}{(|X_{t}^{\varepsilon}|^{2}+1)^{1-\frac{p}{2}}}\leq \frac{C_{p}}{\gamma_{\varepsilon}}\mathbb{E}\left( 1-(|X_{t}^{\varepsilon}|^{2}+1)^{\frac{p}{2}}\right)=\frac{C_{p}}{\gamma_{\varepsilon}}-\frac{C_{p}\mathbb{E}U(t,X_{t}^{\varepsilon})}{\gamma_{\varepsilon}},   
		\end{split}
	\end{equation}
	combining \eqref{3.9}-\eqref{3.13}, we obtain
	\begin{equation}
		\begin{split}
			&\frac{d\mathbb{E}U(t,X_{t}^{\varepsilon})}{dt}\leq\frac{C_{p}}{\gamma_{\varepsilon}}+C_{p}-C_{p}\mathbb{E}U(t,Y_{t}^{\varepsilon})-\frac{C_{p}\mathbb{E}U(t,Y_{t}^{\varepsilon})}{\gamma_{\varepsilon}},
		\end{split}\nonumber
	\end{equation}
	by Gronwall inequality we have 
	\begin{equation}
		\mathbb{E}U(t,X_{t}^{\varepsilon})\leq e^{-C_{p}(\frac{1}{\gamma_{\varepsilon}}+1)t}(|x|^{2}+1)^{\frac{p}{2}}+C_{p}(\frac{1}{\gamma_{\varepsilon}}+1)\int^{t}_{0}e^{-C_{p}(\frac{1}{\gamma_{\varepsilon}}+1)(t-s)}ds,\nonumber
	\end{equation}
	which means
	$$\mathbb{E}(|X_{t}^{\varepsilon}|^{2}+1)^{\frac{p}{2}}\leq \mathbb{E}e^{-C_{p}(\frac{1}{\gamma_{\varepsilon}}+1)t}(|x|^{2}+1)^{\frac{p}{2}}+\mathbb{E}(1-e^{-C_{p}(\frac{1}{\gamma_{\varepsilon}}+1)t}),$$
	so we yield, 
	\begin{equation}\label{3.14}
		\begin{split}
			\sup\limits_{\varepsilon\in (0,1)}\sup\limits_{t\geq0} \mathbb{E}\left(|X_{t}^{\varepsilon}|^{p}\right)\leq C_{p}(1+|x|^{p}),
		\end{split}
	\end{equation}
	we get \eqref{3.4}. Next we need to estimate $\sup\limits_{\varepsilon\in (0,1)}\sup\limits_{t\geq0}\mathbb{E}\left(|Y_{t}^{\varepsilon}|^{p}\right) $.

	From \eqref{3.2} we deduce that 
	\begin{equation}
		\begin{split}
			Y_{t}^{\varepsilon}=&y+\int_{0}^{t}\frac{1}{\eta_{\varepsilon}}f(X_{s}^{\varepsilon},Y_{s}^{\varepsilon})ds+\int_{0}^{t}\frac{1}{\eta_{\varepsilon}^{\frac{1}{\alpha_{2}}}}\left( \int_{|z|\leq \eta_{\varepsilon}^{\frac{1}{\alpha_{2}}}}z \tilde{N}^{2}(ds,dz)+\int_{|z|> \eta_{\varepsilon}^{\frac{1}{\alpha_{2}}}}zN^{2}(ds,dz)\right)\\
		&+\int_{0}^{t}\frac{1}{\beta_{\varepsilon}}c(X_{s}^{\varepsilon},Y_{s}^{\varepsilon})ds, 
		\end{split}\nonumber
	\end{equation}
	applying It\^{o} formula and taking expectation on both sides, with $\mathbb{E}\tilde{N}^{2}(ds,dz)=0$ we derive,
	\begin{align}
		&\frac{d\mathbb{E}U(Y_{t}^{\varepsilon})}{dt}=\mathbb{E} \frac{1}{\eta_{\varepsilon}}( f(X_{t}^{\varepsilon},Y_{t}^{\varepsilon}),DU(Y_{t}^{\varepsilon}))\nonumber \\
		&+\mathbb{E} \int_{|z|\leq \eta_{\varepsilon}^{\frac{1}{\alpha}}}\left( U(Y_{t}^{\varepsilon}+\eta_{\varepsilon}^{-\frac{1}{\alpha_{2}}}z)-U(Y_{t}^{\varepsilon})-( DU(Y_{t}^{\varepsilon}), \eta_{\varepsilon}^{-\frac{1}{\alpha_{2}}}z) \right) \nu_{2}(dz)\nonumber\\
		&+\mathbb{E} \int_{|z|> \eta_{\varepsilon}^{\frac{1}{\alpha}}}\left( U(Y_{t}^{\varepsilon}+\eta_{\varepsilon}^{-\frac{1}{\alpha_{2}}}z)-U(Y_{t}^{\varepsilon}) \right) \nu_{2}(dz)+\mathbb{E} \frac{1}{\beta_{\varepsilon}}( c(X_{t}^{\varepsilon},Y_{t}^{\varepsilon}),DU(t,Y_{t}^{\varepsilon}))\nonumber \\
		&\leq \mathbb{E} \frac{1}{\eta_{\varepsilon}}( f(X_{t}^{\varepsilon},Y_{t}^{\varepsilon}),DU(Y_{t}^{\varepsilon})) +\mathbb{E} \int_{|z|> \eta_{\varepsilon}^{\frac{1}{\alpha}}}\left( U(Y_{t}^{\varepsilon}+\eta_{\varepsilon}^{-\frac{1}{\alpha_{2}}}z)-U(Y_{t}^{\varepsilon}) \right) \nu_{2}(dz)\nonumber\\
		&+\mathbb{E} \int_{|z|\leq \eta_{\varepsilon}^{\frac{1}{\alpha}}}\left( U(Y_{t}^{\varepsilon}+\eta_{\varepsilon}^{-\frac{1}{\alpha_{2}}}z)-U(Y_{t}^{\varepsilon})-(DU(Y_{t}^{\varepsilon}), \eta_{\varepsilon}^{-\frac{1}{\alpha_{2}}}z ) \right) \nu_{2}(dz)\nonumber\\
		&+\mathbb{E} \frac{1}{\beta_{\varepsilon}}\left(  c(X_{t}^{\varepsilon},Y_{t}^{\varepsilon}),DU(Y_{t}^{\varepsilon})\right) =I_{1}+I_{2}+I_{3}+I_{4},\label{3.15}
	\end{align}
	we then estimate four terms respectively.
	
	For $I_{1}$, by   dissipative condition \eqref{2.2}, \eqref{2.4}, 
	\begin{equation}\label{3.16}
		\begin{split}
			&I_{1}=\mathbb{E} \frac{1}{\eta_{\varepsilon}}( f(X_{t}^{\varepsilon},Y_{t}^{\varepsilon}),DU(Y_{t}^{\varepsilon})) \leq\frac{1}{\eta_{\varepsilon}} \mathbb{E}  \frac{( f(X_{t}^{\varepsilon},Y_{t}^{\varepsilon})-f(X_{t}^{\varepsilon},0),pY_{t}^{\varepsilon}) +( f(X_{t}^{\varepsilon},0),pY_{t}^{\varepsilon})}{(|Y_{t}^{\varepsilon}|^{2}+1)^{1-\frac{p}{2}}}\\
			&\leq  \frac{C_{p}}{\eta_{\varepsilon}}\mathbb{E} \frac{C_{1}|Y_{t}^{\varepsilon}|-C_{1}|Y_{t}^{\varepsilon}|^{2}}{(|Y_{t}^{\varepsilon}|^{2}+1)^{1-\frac{p}{2}}}\leq \frac{C_{p,C_{1}}}{\eta_{\varepsilon}}\mathbb{E}\left( 1-(|Y_{t}^{\varepsilon}|^{2}+1)^{\frac{p}{2}}\right)=\frac{C_{p,C_{1}}}{\eta_{\varepsilon}}-\frac{C_{p}\mathbb{E}U(t,Y_{t}^{\varepsilon})}{\eta_{\varepsilon}},  
		\end{split}
	\end{equation}
	in addition, taking $y=\eta_{\varepsilon}^{-\frac{1}{\alpha_{2}}}z$,  we obtain 
	\begin{equation}\label{3.17-1}
	\begin{split}\nu_{2}(dz)=\frac{c}{|z|^{d_2+\alpha_{2}}}dz=\frac{c}{|\eta_{\varepsilon}^{\frac{1}{\alpha_{2}}}y|^{d_2+\alpha_{2}}}(\eta_{\varepsilon}^{\frac{1}{\alpha_{2}}})^{d_2}dy=\frac{1}{\eta_{\varepsilon}}\frac{c}{|y|^{d_2+\alpha_{2}}}dy=\frac{1}{\eta_{\varepsilon}}\nu_{2}(dy),		
	\end{split}
	\end{equation}
	thus for $I_{2}$, similar to \eqref{3.11},
	\begin{equation}\label{3.17}
		\begin{split}
			&I_{2}=\frac{1}{\eta_{\varepsilon}}\mathbb{E} \int_{|y|> 1}\left( U(Y_{t}^{\varepsilon}+y)-U(Y_{t}^{\varepsilon}) \right) \nu_{2}(dy) \\
			&\leq  \frac{C_{p}}{\eta_{\varepsilon}}\mathbb{E}\int_{|y|> 1}\left(|Y_{t}^{\varepsilon}|^{p-1}+|y|^{p-1} \right) \nu_{2}(dy) \leq\frac{C_{p}}{\eta_{\varepsilon}}+\frac{C_{p}\mathbb{E}U(Y_{t}^{\varepsilon})}{\eta_{\varepsilon}},  
		\end{split}
	\end{equation}
	then 
	\begin{equation}\label{3.18}
		\begin{split}
			&I_{3}=\frac{1}{\eta_{\varepsilon}}\mathbb{E} \int_{|y|\leq 1}\left( U(Y_{t}^{\varepsilon}+y)-U(Y_{t}^{\varepsilon})-( DU(Y_{t}^{\varepsilon}),y) \right) \nu_{2}(dy)\leq  \frac{C_{p}}{\eta_{\varepsilon}},  
		\end{split}
	\end{equation}
	and for $I_{4}$,
	\begin{equation}\label{3.19}
		\begin{split}
			&I_{4}=\mathbb{E} \frac{1}{\beta_{\varepsilon}}( c(X_{t}^{\varepsilon},Y_{t}^{\varepsilon}),DU(Y_{t}^{\varepsilon})) \leq \frac{1}{\beta_{\varepsilon}} \mathbb{E} \frac{( c(X_{t}^{\varepsilon},Y_{t}^{\varepsilon})-c(X_{t}^{\varepsilon},0),pY_{t}^{\varepsilon}) +( c(X_{t}^{\varepsilon},0),pY_{t}^{\varepsilon})}{(|Y_{t}^{\varepsilon}|^{2}+1)^{1-\frac{p}{2}}}\\
			&\leq  \frac{C_{p}}{\beta_{\varepsilon}}\mathbb{E} \frac{C_{1}|Y_{t}^{\varepsilon}|-|Y_{t}^{\varepsilon}|^{2}}{(|Y_{t}^{\varepsilon}|^{2}+1)^{1-\frac{p}{2}}}\leq \frac{C_{p}}{\beta_{\varepsilon}}\mathbb{E}\left( 1-(|Y_{t}^{\varepsilon}|^{2}+1)^{\frac{p}{2}}\right)=\frac{C_{p}}{\beta_{\varepsilon}}-\frac{C_{p}\mathbb{E}U(Y_{t}^{\varepsilon})}{\beta_{\varepsilon}},  
		\end{split}
	\end{equation}
	combining \eqref{3.15}-\eqref{3.19}, take $C_{1}$ in \eqref{2.2} large enough, we derive
	\begin{equation}\label{3.20}
		\begin{split}
			&\frac{d\mathbb{E}U(Y_{t}^{\varepsilon})}{dt}\leq\frac{C_{p}}{\beta_{\varepsilon}}+\frac{C_{p}}{\eta_{\varepsilon}}-\frac{C_{p}\mathbb{E}U(Y_{t}^{\varepsilon})}{\eta_{\varepsilon}},\nonumber
		\end{split}
	\end{equation}
	so that by Gronwall inequality we have 
	\begin{equation}\label{3.21}
		\begin{split}
			\mathbb{E}U(Y_{t}^{\varepsilon})\leq e^{-C_{p}(\frac{1}{\eta_{\varepsilon}}+\frac{1}{\beta_{\varepsilon}})t}(|y|^{2}+1)^{\frac{p}{2}}+C_{p}\left( \frac{1}{\eta_{\varepsilon}}+\frac{1}{\beta_{\varepsilon}}\right) \int^{t}_{0}e^{-C_{p}(\frac{1}{\eta_{\varepsilon}}+\frac{1}{\beta_{\varepsilon}})(t-s)}ds,
		\end{split}\nonumber
	\end{equation}
	which means
	$$\mathbb{E}(|Y_{t}^{\varepsilon}|^{2}+1)^{\frac{p}{2}}\leq \mathbb{E}e^{-C_{p}(\frac{1}{\eta_{\varepsilon}}+\frac{1}{\beta_{\varepsilon}})t}(|y|^{2}+1)^{\frac{p}{2}}+\mathbb{E}(1-e^{-C_{p}(\frac{1}{\eta_{\varepsilon}}+\frac{1}{\beta_{\varepsilon}})t}),$$
	so that,
	\begin{equation}\label{3.22}
		\begin{split}
			\sup\limits_{\varepsilon\in (0,1)}\sup\limits_{t\geq0} \mathbb{E}\left(|Y_{t}^{\varepsilon}|^{p}\right)\leq C_{p}(1+|y|^{p}),
		\end{split}
	\end{equation}
proof is complete.
\end{proof}

\section{The frozen equation for \ref{1.1}}
We state the frozen equation corresponding to the process $Y_{t}^{\varepsilon}$ in \eqref{1.1} for any fixed $x\in \mathbb{R}^{d_{1} }$,
\begin{equation}\label{4.1}
	dY_{t}=f(x,Y_{t})dt+dL^{2}_{t},\ Y_{0}=y\in \mathbb{R}^{d_{2} }.
\end{equation}

\subsection{Invariant measure of \eqref{4.1}}
If dissipative condition, growth condition, Lipschitz condition hold, for any fixed  $x\in\mathbb{R}^{d_{1}}$, and initial data $y\in\mathbb{R}^{d_{2}}$, \eqref{4.1} has unique solution $ \{Y_{t}^{x,y}\}_{t\geq 0}$, let $ \{P_{t}^{x}\}_{t\geq 0} $ be the transition semigroups of $ \{Y_{t}^{x,y}\}_{t\geq 0}$. We next state the existence and uniquness of invariant measure possesed by  $ \{Y_{t}^{x,y}\}_{t\geq 0}$.

\begin{lemma}\label{L41}
	Suppose that $ f(x,\cdot)\in C^{1}_{b}$, Lipschitz condition and dissipative condition hold, for any fixed  $ x\in \mathbb{R}^{d_{1}}$, $\forall  t\geq0, y_{1}, y_{2}\in \mathbb{R}^{d_{2}}$, we have $\exists \beta>0$ s.t.
	$$ | Y_{t}^{ x, y_{1}}-Y_{t}^{x, y_{2}}|\leq e^{-\frac{\beta t}{2}}| y_{1}- y_{2}|.$$
\end{lemma}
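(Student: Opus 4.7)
The plan is to exploit the fact that both processes $Y_t^{x,y_1}$ and $Y_t^{x,y_2}$ are driven by the \emph{same} realization of $L_t^2$, so that when we form the difference the stochastic integral disappears and we are left with a pathwise ODE. Concretely, set $Z_t = Y_t^{x,y_1} - Y_t^{x,y_2}$. Then, using the integral form of \eqref{4.1} for both initial conditions and subtracting, we obtain
\begin{equation}
Z_t = (y_1 - y_2) + \int_0^t \bigl[f(x, Y_s^{x,y_1}) - f(x, Y_s^{x,y_2})\bigr]\, ds,\nonumber
\end{equation}
so $t \mapsto Z_t$ is absolutely continuous (pathwise) and no jump calculus is required.

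Next I would differentiate $|Z_t|^2$ via the classical chain rule. This gives
\begin{equation}
\frac{d}{dt}|Z_t|^2 = 2\bigl( Z_t, f(x, Y_t^{x,y_1}) - f(x, Y_t^{x,y_2}) \bigr).\nonumber
\end{equation}
By the dissipative condition \eqref{2.2} applied with $y_1 \leftrightarrow Y_t^{x,y_1}$ and $y_2 \leftrightarrow Y_t^{x,y_2}$ (for the fixed $x$), the right-hand side is bounded above by $-2C_1 |Z_t|^2$. Gronwall's inequality then yields $|Z_t|^2 \leq e^{-2C_1 t}|y_1-y_2|^2$, and taking square roots with $\beta := 2C_1 > 0$ gives exactly
\begin{equation}
|Y_t^{x,y_1} - Y_t^{x,y_2}| \leq e^{-\beta t/2}|y_1 - y_2|,\nonumber
\end{equation}
uniformly in $t \geq 0$ and for every realization of $L_t^2$.

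There is essentially no obstacle here: the only subtlety is that one must resist the temptation to apply It\^o's formula to $|Z_t|^2$ with a jump compensator, which is unnecessary (and would complicate matters) precisely because the two solutions share the same driving noise, so $Z_t$ has no jump or martingale part. The regularity assumption $f(x,\cdot)\in C^1_b$ together with the Lipschitz hypothesis ensures that the ODE representation of $Z_t$ is justified, while the dissipativity constant $C_1$ in \eqref{2.2} provides the exponential decay rate directly, with no dependence on $x$.
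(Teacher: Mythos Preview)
Your argument is correct and is precisely the standard approach: the paper itself omits the proof, citing \cite[Lemma~3.1]{SXX}, whose argument is exactly the pathwise computation you give---subtract the two solutions so the common L\'evy noise cancels, differentiate $|Z_t|^2$, apply the dissipativity \eqref{2.2}, and conclude by Gronwall with $\beta = 2C_1$.
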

\begin{proof}
	The arguement directly follows from \cite[Lemma 3.1]{SXX}, we omit the details here.
\end{proof}

Considering the estimate provided in \eqref{4.4} in Theorem \ref{T41}, which is derived from  \eqref{3.6}, we naturally observe that the family $ \{P_{t}^{x}\}_{t\geq 0} $ depends continuously on the initial data $y$. The tightness with respect to  $y\in\mathbb{R}^{d_{2}} $ can be inferred from  Lemma \ref{L41}.  Subsequently, employing the Bogoliubov-Krylov theorem allows us to establish the existence of the invariant measure $ \mu^{x}$. Define
$$ \bar{f}(x)=\int_{\mathbb{R}^{d_{2}}}f(x,y)\mu^{x}(dy).$$

In addition, for $1\leq p<\alpha_{2}$,
\begin{equation}\label{4.3}
	\begin{split}
		\sup_{x\in \mathbb{R}^{d_{1}}}\int_{\mathbb{R}^{d_{2}}}|y|^{p}\mu^{x}(dy)&=\int_{\mathbb{R}^{d_{2}}}\mathbb{E}|Y_{t}^{x,y}|^{p} \mu^{x}(dy)\leq  \int_{\mathbb{R}^{d_{2}}}C_{p}(1+|y|^{p}) \mu^{x}(dy)\\
		&=\int_{\mathbb{R}^{d_{2}}}|y|^{p} \mu^{x}(dy)+C_{p} \leq C_{p}(1+|y|^{p}).
	\end{split}
\end{equation}

For any bounded measurable function $g:\mathbb{R}^{d_{2}}\rightarrow \mathbb{R}$, denote $ g(y)$, we have
$$ P_{t}^{x}g(y)=\mathbb{E}g(Y_{t}^{ x,y}),\ t\geq 0,\ y\in \mathbb{R}^{d_{2}}.$$
\begin{lemma}\label{L42}
	Suppose that $ f(x,\cdot)\in C^{1}_{b}$, dissipative condition  is valid. $\forall  t\geq0$,  we have for any $g(y)\in C^{1}_{b}$, fixed $x\in \mathbb{R}^{d_{1}},  y_{1}, y_{2}\in \mathbb{R}^{d_{2}}$, $\exists \beta>0$ s.t., 
	$$ \sup_{x\in \mathbb{R}^{d_{1}}}| P_{t}^{x}g(x,y)-\bar{g}(x)|\leq C\cdot Lip(g)e^{-\frac{\beta t}{2}}(1+|y|),$$
	here $ Lip(g)=\sup_{x\neq y}\frac{|g(x)-g(y)|}{|x-y|}.$
\end{lemma}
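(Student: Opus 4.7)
\medskip
\noindent\textbf{Proof plan for Lemma 4.2.}
The plan is to exploit the exponential contraction of trajectories from Lemma \ref{L41} together with the invariance of $\mu^{x}$ in a standard ``coupling by invariant measure'' argument. The notation $P_{t}^{x}f(x,y)$ in the statement is understood as $P_{t}^{x}f(y)=\mathbb{E}f(Y_{t}^{x,y})$ for a Lipschitz test function $f$ (the extra $x$ is a harmless typographical artifact since $f$ is fixed).

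First I would rewrite the difference using the invariance of $\mu^{x}$ under $P_{t}^{x}$. Because $\mu^{x}$ is invariant for the semigroup $\{P_{t}^{x}\}_{t\geq 0}$, we have
\begin{equation}
\bar{f}(x)=\int_{\mathbb{R}^{d_{2}}}f(z)\,\mu^{x}(dz)=\int_{\mathbb{R}^{d_{2}}} P_{t}^{x}f(z)\,\mu^{x}(dz),\nonumber
\end{equation}
so that
\begin{equation}
P_{t}^{x}f(y)-\bar{f}(x)=\int_{\mathbb{R}^{d_{2}}}\bigl(\mathbb{E}f(Y_{t}^{x,y})-\mathbb{E}f(Y_{t}^{x,z})\bigr)\mu^{x}(dz).\nonumber
\end{equation}

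Next I would plug in the Lipschitz bound on $f$ and use Lemma \ref{L41} to control the integrand pointwise in $z$. Namely,
\begin{equation}
\bigl|\mathbb{E}f(Y_{t}^{x,y})-\mathbb{E}f(Y_{t}^{x,z})\bigr|\leq \mathrm{Lip}(f)\,\mathbb{E}\bigl|Y_{t}^{x,y}-Y_{t}^{x,z}\bigr|\leq \mathrm{Lip}(f)\,e^{-\frac{\beta t}{2}}|y-z|,\nonumber
\end{equation}
uniformly in $x\in\mathbb{R}^{d_{1}}$. Integrating against $\mu^{x}$ and using the triangle inequality $|y-z|\leq |y|+|z|$ then gives
\begin{equation}
\bigl|P_{t}^{x}f(y)-\bar{f}(x)\bigr|\leq \mathrm{Lip}(f)\,e^{-\frac{\beta t}{2}}\Bigl(|y|+\int_{\mathbb{R}^{d_{2}}}|z|\,\mu^{x}(dz)\Bigr).\nonumber
\end{equation}

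Finally I would absorb the invariant-measure moment using the uniform bound \eqref{4.3} (with $p=1$, which lies in $[1,\alpha_{2})$ by the standing assumption $\alpha_{2}>1$), yielding $\sup_{x}\int|z|\mu^{x}(dz)\leq C$. Taking the supremum over $x$ on both sides then produces the claimed estimate with a constant $C$ independent of $x$. The only real subtlety is that the constants in Lemma \ref{L41} and in \eqref{4.3} must both be uniform in $x\in\mathbb{R}^{d_{1}}$, so the exponent $\beta>0$ is the one inherited from the dissipativity constant in \eqref{2.2}; no step requires more than the coupling estimate and the first-moment control of the invariant measure, so no genuine obstacle is expected.
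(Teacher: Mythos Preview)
Your proposal is correct and is precisely the standard coupling-by-invariant-measure argument; the paper itself does not spell this out but simply cites \cite[Proposition 3.8]{SXX}, which proves the result by the same steps you outline. Your observation that uniformity in $x$ of both the contraction constant $\beta$ from Lemma~\ref{L41} and the first moment bound \eqref{4.3} is what delivers the $\sup_{x}$ is exactly the point, and there is no gap.
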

\begin{proof}
	See details in \cite[Proposition 3.8]{SXX}.
\end{proof}

From Lemma \ref{L42} we derive the exponential ergodicity of invarinat measure.

\subsection{Moment estimates of $ Y_{t}^{ x,y}$}

\begin{theorem}\label{T41}
	Suppose that \eqref{2.2}, \eqref{2.4} hold, we have for $1\leq p<\alpha_{2}$, for $T\geq 1$,
	\begin{equation}\label{4.4}
	\sup_{t\geq0} \mathbb{E}|Y_{t}^{ x,y}|^{p}\leq C_{p}(1+|y|^{p}),
	\end{equation}
	\begin{equation}\label{4.5}
		\mathbb{E}\left(\sup_{t\in [0,T]} |Y_{t}^{ x,y}|^{p}\right)\leq C_{p}(T^{\frac{p}{\alpha_{2}}}+|y|^{p}).
	\end{equation}
\end{theorem}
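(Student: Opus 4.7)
The two bounds are handled by complementary methods, with the sup-in-time estimate being the substantive one.

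For \eqref{4.4}, I would adapt verbatim the computation carried out for the bound \eqref{3.6} in Theorem \ref{T32}, specialized to the frozen equation (no $\eta_\varepsilon$, no $\beta_\varepsilon$, no $c$-term). Set $U(y) = (|y|^2+1)^{p/2}$, so that $|DU|\leq C_p(|y|^{p-1}+1)$ and $|D^2 U|\leq C_p$; apply It\^o's formula to $U(Y_t^{x,y})$ and take expectations, which kills the compensated-Poisson martingale. The drift $\langle DU(Y_s), f(x, Y_s)\rangle$ is controlled by \eqref{2.4} to produce $C_p - c\,\mathbb{E}U(Y_s)$; the small-jump compensator is bounded using $\int_{|z|\leq 1}|z|^2\,\nu_2(dz)<\infty$ together with $|D^2 U|\leq C_p$; the large-jump compensator is bounded by Young's inequality and $\int_{|z|>1}|z|^p\,\nu_2(dz)<\infty$, which is valid precisely because $p<\alpha_2$. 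Gronwall then delivers $\sup_{t\geq 0}\mathbb{E}(|Y_t^{x,y}|^2+1)^{p/2}\leq C_p(1+|y|^p)$, uniformly in $x$ because $\sup_{x} |f(x,0)|<\infty$.

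The more delicate estimate is \eqref{4.5}. The key point is that the expected suprema of the driving $\alpha_2$-stable process scale as $T^{p/\alpha_2}$ rather than $T^p$; a naive It\^o+BDG attack would introduce a drift contribution $\mathbb{E}\sup_{t\leq T}\bigl|\int_0^t f(x,Y_s)\,ds\bigr|^p\lesssim T^p$ by Jensen, which is weaker than the claim for $T\geq 1$. To avoid this, introduce the pathwise change of variable
\begin{equation*}
\Phi_t := Y_t^{x,y} - y - L_t^2,
\end{equation*}
so that $\Phi_t$ is continuous (jumps of $Y^{x,y}$ cancel jumps of $L^2$) and $\dot\Phi_t = f(x,\Phi_t + y + L_t^2)$ with $\Phi_0=0$ between jumps of $L^2$. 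Splitting $f(x,Y_t) = [f(x,Y_t) - f(x,y+L_t^2)] + f(x,y+L_t^2)$, dissipativity \eqref{2.2} applied to the first bracket (with $y_1 = Y_t$, $y_2 = y+L_t^2$, whence $y_1-y_2=\Phi_t$) yields $-C_1|\Phi_t|^2$, while $|f(x,y+L_t^2)|\leq C(1+|y|+|L_t^2|)$ by \eqref{2.112} combined with $\sup_x|f(x,0)|<\infty$. A Young step gives
\begin{equation*}
\tfrac{d}{dt}|\Phi_t|^2 \leq -C_1|\Phi_t|^2 + C\bigl(1 + |y|^2 + |L_t^2|^2\bigr),
\end{equation*}
and deterministic Gronwall produces the pathwise bound $\sup_{t\leq T}|\Phi_t|^2 \leq C\bigl(1+|y|^2 + \sup_{t\leq T}|L_t^2|^2\bigr)$.

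Raising to the $p/2$-th power and using subadditivity of $x\mapsto x^{p/2}$ (valid since $p<\alpha_2<2$), then taking expectations, yields $\mathbb{E}\sup_{t\leq T}|\Phi_t|^p \leq C_p\bigl(1+|y|^p + \mathbb{E}\sup_{t\leq T}|L_t^2|^p\bigr)$. By self-similarity $(L_{Ts}^2)_{s\in[0,1]}\stackrel{d}{=}(T^{1/\alpha_2}L_s^2)_{s\in[0,1]}$, one has $\mathbb{E}\sup_{t\leq T}|L_t^2|^p = T^{p/\alpha_2}\,\mathbb{E}\sup_{s\leq 1}|L_s^2|^p$, and the latter is finite because $p<\alpha_2$ (by a tail estimate, or Doob applied to the submartingale $|L^2|^p$). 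Plugging into $Y_t = \Phi_t + y + L_t^2$ and a final triangle inequality gives \eqref{4.5}. The main obstacle is exactly the upgrade from the drift-driven $T^p$ to the noise-driven $T^{p/\alpha_2}$: the change of variable $Y\mapsto\Phi$ is what lets dissipativity act directly on $\Phi_t$ and thereby removes the $T^p$ factor.
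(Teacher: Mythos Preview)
Your proof of \eqref{4.4} matches the paper's: both simply specialize the computation behind \eqref{3.6} to the frozen equation.

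For \eqref{4.5} your argument is correct but genuinely different from the paper's. You subtract off the noise, setting $\Phi_t=Y_t-y-L_t^2=\int_0^t f(x,Y_s)\,ds$, and use dissipativity \eqref{2.2} to obtain a pathwise ODE inequality for $|\Phi_t|^2$; the $T^{p/\alpha_2}$ scaling then drops out of a single appeal to the self-similarity of the isotropic $\alpha_2$-stable process. The paper instead works directly with a $T$-scaled Lyapunov function $U_T(y)=(|y|^2+T^{2/\alpha_2})^{p/2}$, applies It\^o's formula to $U_T(Y_t^{x,y})$, and splits the Poisson integrals at the $T$-dependent radius $|z|=T^{1/\alpha_2}$; the scaling is then baked into the estimates $|D^2U_T|\leq C_pT^{(p-2)/\alpha_2}$ and $\int_{|z|\leq T^{1/\alpha_2}}|z|^2\nu_2(dz)\sim T^{(2-\alpha_2)/\alpha_2}$, and BDG handles the martingale terms. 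Your route is shorter and conceptually transparent, but it leans on self-similarity (so it would not transfer verbatim to more general L\'evy drivers) and uses linear growth of $f(x,\cdot)$ to bound $|f(x,y+L_t^2)|$; the paper's $U_T$ argument uses only the dissipativity bounds \eqref{2.2}--\eqref{2.4} stated in the hypotheses and is the template reused later for Lemma~\ref{L43}. One cosmetic point: $\Phi$ is absolutely continuous everywhere, not merely ``between jumps'', since it equals $\int_0^t f(x,Y_s)\,ds$; the differential identity $\tfrac{d}{dt}|\Phi_t|^2=2\langle\Phi_t,f(x,Y_t)\rangle$ holds a.e.\ without any caveat about jump times.
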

\begin{proof}
	\eqref{4.4} follows from \eqref{3.6} directly, so we just need to prove \eqref{4.5}.
	We define 
	\begin{equation}\label{4.6}
		U_{T}(y)=(|y|^{2}+T^{\frac{2}{\alpha_{2}}})^{\frac{p}{2}},
	\end{equation}
	so that similar to \eqref{du} and \eqref{d2u},
	\begin{equation}\label{4.7}
		\begin{split}
				|DU_{T}(y)|&=\left|  \frac{py}{(|y|^{2}+T^{\frac{2}{\alpha_{2}}})^{1-\frac{p}{2}}}\right|\leq C_{p} |y|^{p-1},\
		|D^{2}U_{T}(y)|=\left|   \frac{pI_{d_{2}\times d_{2} }}{(|y|^{2}+T^{\frac{2}{\alpha_{2}}})^{1-\frac{p}{2}}}-\frac{p(p-2)y\otimes y}{(|y|^{2}+T^{\frac{2}{\alpha_{2}}})^{2-\frac{p}{2}}} \right|\leq C_{p}T^{\frac{p-2}{\alpha_{2}}},
		\end{split}
	\end{equation}
by It\^{o} formula,
	\begin{equation}\label{4.8}
		\begin{split}
			&U_{T}(Y_{t}^{ x,y})=U_{T}(y)+ \int_{0}^{t} (f(x,Y_{r}^{ x,y}),DU_{T}(Y_{r}^{x,y})) dr \\
			&+ \int_{0}^{t}\int_{|z|\leq T^{\frac{1}{\alpha_{2}}}}\left( U_{T}(Y_{r}^{ x,y}+z)-U_{T}(Y_{r}^{ x,y})-(DU_{T}(Y_{r}^{ x,y}), z)  \right) \tilde{N}^{2}(dr,dz)\\
			&+\int_{0}^{t} \int_{|z|> T^{\frac{1}{\alpha_{2}}}}\left( U_{T}(Y_{r}^{x,y}+z)-U_{T}(Y_{r}^{ x,y}) \right) \nu_{2}(dz)dr\\
			&\leq  \int_{0}^{t} (f(x,Y_{r}^{x,y}),DU_{T}(Y_{r}^{ x,y})) dr+\mathbb{E} \int_{0}^{t}\int_{|z|> T^{\frac{1}{\alpha_{2}}}}\left( U_{T}(Y_{r}^{x,y}+z)-U_{T}(Y_{r}^{ x,y}) \right) \nu_{2}(dz)dr\\
			&+\int_{0}^{t} \int_{|z|\leq T^{\frac{1}{\alpha_{2}}}}\left( U_{T}(Y_{r}^{x,y}+z)-U_{T}(Y_{r}^{ x,y})- (DU_{T}(Y_{r}^{x,y}), z)\right) \nu_{2}(dz)dr\\
			&+\int_{0}^{t} \int_{|z|> T^{\frac{1}{\alpha_{2}}}}\left( U_{T}(Y_{r}^{x,y}+z)-U_{T}(Y_{r}^{ x,y}) \right) N^{2}(dr,dz)+U_{T}(y)=\hat{I}_{1}+\hat{I}_{2}+\hat{I}_{3}+\hat{I}_{4}+U_{T}(y),
		\end{split}
	\end{equation}
	so by dissipative condition of $f(x,y)$ in  \eqref{2.4} and $T\geq 1$, we have for $\hat{I}_{1}$,
	\begin{equation}\label{4.9}
		\mathbb{E}\left(\sup_{t\in [0,T]} |\hat{I}_{1}(t)|\right)\leq \int_{0}^{T} \frac{C_{p}}{(|Y_{r}^{ x,y}|^{2}+T^{\frac{2}{\alpha_{2}}})^{1-\frac{p}{2}}}dr\leq C_{p}T^{\frac{p}{\alpha_{2}}-\frac{2}{\alpha_{2}}+1}\leq C_{p}T^{\frac{p}{\alpha_{2}}},
	\end{equation}
	meanwhile for $\hat{I}_{3}$,
	\begin{equation}\label{4.10}
		\begin{split}
			&\mathbb{E}\left(\sup_{t\in [0,T]} |\hat{I}_{3}(t)|\right)\leq C_{p}T^{\frac{p}{\alpha_{2}}-\frac{2}{\alpha_{2}}}\int_{0}^{T}\int_{|z|\leq T^{\frac{1}{\alpha_{2}}}}|z|^{2}\nu_{2}(dz)dr\leq C_{p}T^{\frac{p}{\alpha_{2}}},\\
		\end{split}
	\end{equation}
	and for $\hat{I}_{2}$, Burkholder-Davies-Gundy's inequality and \eqref{4.4},
	\begin{equation}\label{4.11}
		\begin{split}
			&\mathbb{E}\left(\sup_{t\in [0,T]} |\hat{I}_{2}(t)|\right)\leq \mathbb{E}\left[ \int_{0}^{T} \int_{|z|\leq T^{\frac{1}{\alpha_{2}}}}\left|  U_{T}(Y_{r}^{ x,y}+z)-U_{T}(Y_{r}^{ x,y}) \right|^{2}  N_{2}(dz)dr\right] ^{\frac{1}{2}}\\
			&\leq \mathbb{E}\left[ \int_{0}^{T} \int_{|z|\leq T^{\frac{1}{\alpha_{2}}}}\left( \left| Y_{r}^{ x,y}  \right|^{2p-2}|z|^{2}+|z|^{2p} \right)  \nu_{2}(dz)dr\right] ^{\frac{1}{2}}\\
			&\leq \frac{1}{4}\mathbb{E} \left(\sup_{r\in [0,T]} |Y_{r}^{ x,y}|^{p}\right)+C_{p}\left(\int_{0}^{T} \int_{|z|\leq T^{\frac{1}{\alpha_{2}}}} |z|^{2}  \nu_{2}(dz)dr\right) ^{p}+\int_{0}^{T} \int_{|z|\leq T^{\frac{1}{\alpha_{2}}}} |z|^{p}  \nu_{2}(dz)dr\\
			&\leq \frac{1}{4}\mathbb{E} \left(\sup_{r\in [0,T]} |Y_{r}^{ x,y}|^{p}\right)+C_{p}T^{\frac{p}{\alpha_{2}}},
		\end{split}
	\end{equation}
	for $\hat{I}_{4}$,
	\begin{equation}\label{4.11-1}
	\begin{split}
		&\mathbb{E}\left(\sup_{t\in [0,T]} |\hat{I}_{4}(t)|\right)\leq \mathbb{E}\left[ \int_{0}^{T} \int_{|z|> T^{\frac{1}{\alpha_{2}}}}\left|  U_{T}(Y_{r}^{ x,y}+z)-U_{T}(Y_{r}^{ x,y}) \right|  N_{2}(dz)dr\right] \\
		&\leq \mathbb{E}\left[ \int_{0}^{T} \int_{|z|>T^{\frac{1}{\alpha_{2}}}}\left( \left| Y_{r}^{ x,y}  \right|^{p-1}|z|+|z|^{p} \right)  \nu_{2}(dz)dr\right]\\
		&\leq \frac{1}{4}\mathbb{E} \left(\sup_{r\in [0,T]} |Y_{r}^{ x,y}|^{p}\right)+C_{p}\left(\int_{0}^{T} \int_{|z|>T^{\frac{1}{\alpha_{2}}}} |z|^{2}  \nu_{2}(dz)dr\right) ^{p}+\int_{0}^{T} \int_{|z|> T^{\frac{1}{\alpha_{2}}}} |z|^{p}  \nu_{2}(dz)dr\\
		&\leq \frac{1}{4}\mathbb{E} \left(\sup_{r\in [0,T]} |Y_{r}^{ x,y}|^{p}\right)+C_{p}T^{\frac{p}{\alpha_{2}}},
	\end{split}
	\end{equation}
	where we used Young inequality in third inequality. From \eqref{4.9}-\eqref{4.11-1}, we derive \eqref{4.5}.
\end{proof}

Next we study $\mathbb{E}\left(\underset{t\in [0,T]}{\sup} |Y_{t}^{\varepsilon}|^{p}\right)$, which is essential to strong convergence estimates.

\begin{lemma}\label{L43}
	$\forall t\in [0,T]$,  $T\geq1$, 
		\begin{equation}
		\begin{split}\label{4L3}
			& \mathbb{E}\left(\sup_{t\in [0,T]} |Y_{t}^{\varepsilon}|^{p}\right)\leq C_{T,p}\left( \eta_{\varepsilon}^{-\frac{p}{\alpha_{2}}}+|y|^{p}\right) .
		\end{split}
	\end{equation}
\end{lemma}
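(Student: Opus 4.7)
The desired bound is the natural analogue of Theorem~\ref{T41} on the fast effective time horizon $T/\eta_\varepsilon$, since by $\alpha_2$-stable self-similarity the rescaled noise $\eta_\varepsilon^{-1/\alpha_2}L^2_t$ is distributionally $L^2_{t/\eta_\varepsilon}$. My plan is therefore to mimic the proof of Theorem~\ref{T41}, enlarging the offset in the test function to match the fast scale: set
$$
U_T(y)=\bigl(|y|^2+(T/\eta_\varepsilon)^{2/\alpha_2}\bigr)^{p/2},
$$
whose derivative bounds $|DU_T(y)|\leq C_p|y|^{p-1}$ and $|D^2U_T(y)|\leq C_p(T/\eta_\varepsilon)^{(p-2)/\alpha_2}$ follow as in~\eqref{4.7}. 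I would then apply It\^o's formula to $U_T(Y_t^\varepsilon)$ using the L\'evy--It\^o decomposition of $L^2$ at cutoff $|z|\leq T^{1/\alpha_2}$ (so that after scaling by $\eta_\varepsilon^{-1/\alpha_2}$ the small jumps of $Y^\varepsilon$ are bounded by $(T/\eta_\varepsilon)^{1/\alpha_2}$). This yields four contributions, in direct parallel with~\eqref{4.8}: a drift $\hat I_1$ combining $\frac{1}{\eta_\varepsilon}f$ and $\frac{1}{\beta_\varepsilon}c$, a small-jumps compensator $\hat K_1$, a small-jumps compensated martingale $\hat J_1$, and a big-jumps Poisson integral $\hat L_1$.

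The deterministic pieces are bounded in close parallel with~\eqref{4.9}--\eqref{4.10}. For $\hat I_1$, the dissipativity~\eqref{2.4} applied to both $f$ and $c$, together with the standing assumption $\eta_\varepsilon/\beta_\varepsilon<1$ which lets the $\frac{1}{\beta_\varepsilon}c$ contribution be absorbed into that of $\frac{1}{\eta_\varepsilon}f$, yields $|\hat I_1|\leq C_pT^{p/\alpha_2}\eta_\varepsilon^{-p/\alpha_2}$. For $\hat K_1$, Taylor expansion and the elementary computation $\int_{|z|\leq T^{1/\alpha_2}}|z|^2\nu_2(dz)\sim T^{(2-\alpha_2)/\alpha_2}$ deliver the same order. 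The martingale $\hat J_1$ is handled just as in~\eqref{4.11}: BDG and Jensen reduce it to $\bigl(\mathbb{E}\int_0^T\!\int|U_T(Y+\eta_\varepsilon^{-1/\alpha_2}z)-U_T(Y)|^2\nu_2(dz)\,ds\bigr)^{1/2}$, and the mean-value bound $|U_T(y+h)-U_T(y)|\leq C(|y|^{p-1}|h|+|h|^p)$ together with the scaled moment integrals produces
$$
\mathbb{E}\sup_{t\leq T}|\hat J_1(t)|\leq CT^{1/\alpha_2}\eta_\varepsilon^{-1/\alpha_2}\,\mathbb{E}\sup_{t\leq T}|Y^\varepsilon_t|^{p-1}+CT^{p/\alpha_2}\eta_\varepsilon^{-p/\alpha_2};
$$
Young's inequality with conjugate exponents $(p/(p-1),p)$ then absorbs the first term into $\tfrac14\mathbb{E}\sup|Y^\varepsilon|^p+CT^{p/\alpha_2}\eta_\varepsilon^{-p/\alpha_2}$.

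The principal obstacle is the big-jumps term $\hat L_1$, where the standard $L^2$-BDG is unavailable because $\int_{|z|>T^{1/\alpha_2}}|z|^2\nu_2=\infty$ when $\alpha_2<2$. I would avoid BDG entirely by exploiting the elementary observation that the signed partial sum $\hat L_1(t)=\sum_{s\leq t,\,|\Delta L^2_s|>T^{1/\alpha_2}}[U_T(Y_s)-U_T(Y_{s-})]$ satisfies
$$
\sup_{t\leq T}|\hat L_1(t)|\leq\int_0^T\!\int_{|z|>T^{1/\alpha_2}}|U_T(Y+\eta_\varepsilon^{-1/\alpha_2}z)-U_T(Y)|\,N^2(ds,dz),
$$
whose expectation replaces $N^2$ by the compensator $\nu_2(dz)\,ds$. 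Using the same mean-value bound together with the convergent tail integrals $\int_{|z|>T^{1/\alpha_2}}|z|\nu_2(dz)\sim T^{(1-\alpha_2)/\alpha_2}$ (valid because $\alpha_2>1$) and $\int_{|z|>T^{1/\alpha_2}}|z|^p\nu_2(dz)\sim T^{(p-\alpha_2)/\alpha_2}$ (valid because $p<\alpha_2$), one more application of Young yields $\mathbb{E}\sup|\hat L_1|\leq\tfrac14\mathbb{E}\sup|Y^\varepsilon|^p+CT^{p/\alpha_2}\eta_\varepsilon^{-p/\alpha_2}$. Summing all four contributions, absorbing $\tfrac12\mathbb{E}\sup|Y^\varepsilon|^p$ to the left-hand side, and using $U_T(y)\leq C_p(|y|^p+(T/\eta_\varepsilon)^{p/\alpha_2})$ delivers the claim.
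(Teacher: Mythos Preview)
Your proof is correct and follows essentially the same approach as the paper: both exploit $\alpha_2$-stable self-similarity to reduce the supremum bound on $[0,T]$ for $Y^\varepsilon$ to the estimate \eqref{4.5} on the effective horizon $T/\eta_\varepsilon$, handling the extra drift $\tfrac{1}{\beta_\varepsilon}c$ via dissipativity and $\eta_\varepsilon/\beta_\varepsilon<1$. The only cosmetic difference is packaging: the paper first performs the time change $\tilde Y^\varepsilon_s:=Y^\varepsilon_{s\eta_\varepsilon}$ (so that $\tilde Y^\varepsilon$ solves a unit-scale SDE driven by $\tilde L^2_s:=\eta_\varepsilon^{-1/\alpha_2}L^2_{s\eta_\varepsilon}$) and then quotes the argument of \eqref{4.5} on $[0,T/\eta_\varepsilon]$, whereas you embed the scaling directly into the offset of $U_T$ and compute on the original time axis.
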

\begin{proof}
	Denote $\tilde{L}^{2}_{t}=\frac{1}{\eta_{\varepsilon}^{\frac{1}{\alpha_{2}}}}L^{2}_{t\eta_{\varepsilon}}$, so that 
		\begin{equation}
		\begin{split}
			\tilde{Y}_{t}^{\varepsilon}&=y+\frac{1}{\eta_{\varepsilon}}\int ^{t\eta_{\varepsilon}}_{0}f(X_{s}^{\varepsilon},\tilde{Y}_{s}^{\varepsilon})ds+\frac{1}{\beta_{\varepsilon}}\int ^{t\eta_{\varepsilon}}_{0}c(X_{s}^{\varepsilon},\tilde{Y}_{s}^{\varepsilon})ds+\frac{1}{\eta_{\varepsilon}^{\frac{1}{\alpha_{2}}}}L^{2}_{t\eta_{\varepsilon}}\\
			&=y+\int ^{t}_{0}f(X_{s\eta_{\varepsilon}}^{\varepsilon},\tilde{Y}_{s\eta_{\varepsilon}}^{\varepsilon})ds+\frac{\eta_{\varepsilon}}{\beta_{\varepsilon}}\int ^{t}_{0}c(X_{s\eta_{\varepsilon}}^{\varepsilon},\tilde{Y}_{s\eta_{\varepsilon}}^{\varepsilon})ds+\tilde{L}^{2}_{t},\\
		\end{split}\nonumber
	\end{equation}
we can see that $\tilde{Y}_{t}^{\varepsilon}$ and $Y_{t}^{\varepsilon}$ have the same law, then similar to the proof of \eqref{4.5}, with the fact that $\frac{\eta_{\varepsilon}}{\beta_{\varepsilon}}<1$, and dissipative condition of $c(x,y)$,
	\begin{equation}
	\begin{split}
	\mathbb{E}\left(\sup_{t\in [0,T]} \frac{\eta_{\varepsilon}}{\beta_{\varepsilon}}\left| \int_{0}^{t}  (c(x,\tilde{Y}_{s}^{\varepsilon}),DU_{T}(\tilde{Y}_{s}^{\varepsilon})) ds \right| \right) \leq \int_{0}^{T} \frac{C_{p}}{(|Y_{s}^{ x,y}|^{2}+T^{\frac{2}{\alpha_{2}}})^{1-\frac{p}{2}}}ds\leq C_{p}T^{\frac{p}{\alpha_{2}}-\frac{2}{\alpha_{2}}+1}\leq C_{p}T^{\frac{p}{\alpha_{2}}},	\end{split}\nonumber
\end{equation}
then we have
	\begin{equation}
	\begin{split}
		& \mathbb{E}\left(\sup_{t\in [0,T]} |\tilde{Y}_{t}^{\varepsilon}|^{p}\right)\leq C_{p}\left( T^{\frac{p}{\alpha_{2}}}+|y|^{p}\right),
	\end{split}\nonumber
\end{equation}
from \eqref{2.2} and \eqref{4.5}, for any $T\geq 1$,
	\begin{equation}\label{4.12}
		\begin{split}
			&\mathbb{E}\left(\sup_{t\in [0,T]} |Y_{t}^{\varepsilon}|^{p}\right)= \mathbb{E}\left(\sup_{t\in [0,\frac{T}{\eta_{\varepsilon}}]} |\tilde{Y}_{t}^{\varepsilon}|^{p}\right)\leq C_{p}\left( \left( \frac{T}{\eta_{\varepsilon}}\right) ^{\frac{p}{\alpha_{2}}}+|y|^{p}\right) \leq C_{T,p}\left( \eta_{\varepsilon}^{-\frac{p}{\alpha_{2}}}+|y|^{p}\right).\\
		\end{split}
	\end{equation}
\end{proof}

\section{Strong convergence estimates for \ref{1.1}}

 Insipred by \cite{RX} and \cite{SXX}, we next consider the following associated nonlocal Poisson equation, which can be regarded as a corrector equation to eliminate the effects of drift term $b(t,x,y)$, $\frac{1}{\gamma_{\varepsilon}}H(t,X_{t}^{\varepsilon},Y_{t}^{\varepsilon})$, and effects of $Y^{\varepsilon}_{t}$
in $X^{\varepsilon}_{t}$ by the generator of $Y_{t}$, so we next construct the following nonlocal equation.

\subsection{Regularity estimates of nonlocal Poisson equation}

Let $g(\cdot,\cdot,\cdot)\in C_{b}^{\frac{v}{\alpha_{1}},1+\gamma, 2+\gamma}$  satisfies Lipschitz condition, growth condition, dissipative condition,
\begin{equation}\label{5.1}
	\mathcal{L}_{2}(x,y)u(t,x,y)+g(t,x,y)-\bar{g}(t,x)=0,
\end{equation}
 here $ \bar{g}(t,x)=\int_{\mathbb{R}^{d_{2}}}g(t,x,y)\mu^{x}(dy),$ some regularity estimates of $u(t,x,y)$ are necessary.

\begin{theorem}\label{T51}
	For any  $x\in \mathbb{R}^{d_{1}}, $  $y\in \mathbb{R}^{d_{2}} $, and $t\in [0,T]$, $g(t,\cdot,\cdot)\in C_{b}^{1+\gamma, 2+\gamma}$ we define 
	\begin{equation}\label{5.2}
		\begin{split}
			u(t,x,y)=\int^{\infty}_{0}\left( \mathbb{E}g(t,x,Y_{s}^{x,y})-\bar{g}(t,x)\right) ds,
		\end{split}
	\end{equation}
	then $u(t,x,y)$ is a solution of \eqref{5.1} and  $u(t,\cdot,y)\in C_{b}^{1}(\mathbb{R}^{d_{1}})$, $u(t,x,\cdot)\in C_{b}^{2}(\mathbb{R}^{d_{2}})$, $\exists C>0$ s.t.,
	\begin{equation}\label{5.3}
			\sup_{t\in [0,T]}\sup_{x\in \mathbb{R}^{d_{1}}}|u(t,x,y)|\leq C_{T}(1+|y|),
	\end{equation}
	\begin{equation}\label{5.4}
		\sup_{t\in [0,T]}\sup_{\substack{x\in \mathbb{R}^{d_{1}}\\ y\in \mathbb{R}^{d_{2}}}}|\nabla_{y}u(t,x,y)|\leq C,
	\end{equation}
	\begin{equation}\label{5.5}
		\sup_{t\in [0,T]}\sup_{x\in \mathbb{R}^{d_{1}}}|\nabla_{x}u(t,x,y)|\leq C(1+|y|),
	\end{equation}
	\begin{equation}\label{5.6}
		\sup_{t\in [0,T]}|\nabla_{x}u(t,x_{1},y)-\nabla_{x}u(t,x_{2},y)|\leq C|x_{1}-x_{2}|^{\gamma}(1+|x_{1}-x_{2}|^{1-\gamma})(1+|y|),
	\end{equation}
	here $\gamma\in (\alpha_{1}-1,1)$.
\end{theorem}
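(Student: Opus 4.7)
The plan is to analyze $u$ through its probabilistic representation \eqref{5.2}, combining exponential ergodicity (Lemma \ref{L42}) for decay at infinity with the flow contraction estimate (Lemma \ref{L41}) for $y$-derivatives. I will proceed in four steps: (i) well-definedness of $u$, verification that $u$ solves \eqref{5.1}, and the bound \eqref{5.3}; (ii) the $y$-derivative bound \eqref{5.4} together with the $C^2_y$ regularity; (iii) the $x$-derivative bound \eqref{5.5}; and (iv) the H\"older estimate \eqref{5.6}.

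Step (i) is routine. Since $g(t,\cdot,\cdot)\in C_b^{1+\gamma,2+\gamma}$ is Lipschitz in $y$, Lemma \ref{L42} gives $|\mathbb{E}g(t,x,Y_s^{x,y})-\bar g(t,x)|\leq C\,e^{-\beta s/2}(1+|y|)$, from which integrability on $[0,\infty)$ and \eqref{5.3} follow. To see that $u$ solves \eqref{5.1}, the Markov property produces $P_h^x u(t,x,\cdot)(y) = u(t,x,y)-\int_0^h(\mathbb{E}g(t,x,Y_s^{x,y})-\bar g(t,x))\,ds$, and dividing by $h$ and letting $h\downarrow 0$ yields $\mathcal{L}_2(x,y)u=-(g-\bar g)$. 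For step (ii), I differentiate under the integral to obtain $\nabla_y u(t,x,y)=\int_0^\infty\mathbb{E}[\nabla_y g(t,x,Y_s^{x,y})\nabla_y Y_s^{x,y}]\,ds$; the estimate $|\nabla_y Y_s^{x,y}|\leq e^{-\beta s/2}$ from Lemma \ref{L41} together with $\|\nabla_y g\|_\infty<\infty$ yields \eqref{5.4}, and the $C^2_y$ property follows either from a second differentiation using $\nabla_y^2 g\in C_b$ or from standard elliptic regularity applied to \eqref{5.1} (since $\mathcal{L}_2$ has order $\alpha_2>1$ and $g-\bar g$ is $C^{2+\gamma}$ in $y$).

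For step (iii), the Jacobian $\zeta_s:=\nabla_x Y_s^{x,y}$ satisfies the linear equation $\partial_s\zeta_s=\nabla_x f(x,Y_s^{x,y})+\nabla_y f(x,Y_s^{x,y})\zeta_s$ with $\zeta_0=0$ (the stable driver $L^2$ is $x$-independent), and the dissipative condition on $f$ forces the symmetric part of $\nabla_y f$ to be at most $-C_1$, so $\sup_{s\geq 0}|\zeta_s|<\infty$. The chain rule then gives $\nabla_x u(t,x,y)=\int_0^\infty\bigl(\mathbb{E}[\nabla_x g(t,x,Y_s^{x,y})+\nabla_y g(t,x,Y_s^{x,y})\zeta_s]-\overline{\nabla_x g}(t,x)\bigr)\,ds$, where the centring of the second term is handled via $\nabla_x\bar g(t,x)=\lim_{s\to\infty}\mathbb{E}\nabla_x g(t,x,Y_s^{x,y})$; a second application of Lemma \ref{L42} yields \eqref{5.5}, with the $(1+|y|)$ factor originating from the Lipschitz-in-$y$ clause in that lemma.

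The main obstacle is step (iv), the H\"older estimate \eqref{5.6} for $\nabla_x u$. My plan here combines three ingredients: the $C^\gamma$ H\"older continuity of $\nabla_x g$ (from $g\in C_b^{1+\gamma,2+\gamma}$); H\"older-in-$x$ control of $|Y_s^{x_1,y}-Y_s^{x_2,y}|$ obtained by a Gr\"onwall argument from the Lipschitz hypothesis on $f$; and a splitting of $\int_0^\infty ds$ into regions $[0,s_0]$ and $[s_0,\infty)$ with $s_0=s_0(|x_1-x_2|)$ chosen to balance a short-time direct Lipschitz bound against the exponential tail from Lemma \ref{L42}. Optimising $s_0$ produces the prefactor $|x_1-x_2|^\gamma$, while the additional factor $(1+|x_1-x_2|^{1-\gamma})$ reflects the fact that \eqref{5.5} is only linear in $|y|$, so for large $|x_1-x_2|$ the estimate degenerates to the crude Lipschitz-type bound inherited from step (iii). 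The restriction $\gamma\in(\alpha_1-1,1)$ does not appear to be needed for the regularity statements themselves; rather, it is a compatibility condition that will be invoked later, when \eqref{5.6} is paired with the nonlocal operator $\mathcal L_1$ of order $\alpha_1$ in the strong convergence analysis.
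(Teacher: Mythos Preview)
Your approach is essentially what the paper does: it defers to \cite[Proposition 3.3]{SXX}, and the argument there is precisely the probabilistic one you outline (exponential ergodicity from Lemma~\ref{L42} for \eqref{5.3}, the flow-derivative bound from Lemma~\ref{L41} for \eqref{5.4}, then Jacobian-in-$x$ estimates for \eqref{5.5}--\eqref{5.6}). Steps (i), (ii), (iv) and your remark on the role of $\gamma\in(\alpha_1-1,1)$ are all fine.

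There is, however, a bookkeeping slip in step (iii) that you should fix before writing it up. You conflate $\overline{\nabla_x g}(t,x)=\int\nabla_x g(t,x,y)\,\mu^x(dy)$ with $\nabla_x\bar g(t,x)$; these differ because $\mu^x$ depends on $x$. The correct differentiated representation is
\[
\nabla_x u(t,x,y)=\int_0^\infty\Bigl(\mathbb{E}\bigl[\nabla_x g(t,x,Y_s^{x,y})+\nabla_y g(t,x,Y_s^{x,y})\zeta_s\bigr]-\nabla_x\bar g(t,x)\Bigr)\,ds,
\]
and the centring works because $\nabla_x\bar g(t,x)=\lim_{s\to\infty}\mathbb{E}[\nabla_x g+\nabla_y g\,\zeta_s]$, \emph{with the $\zeta_s$ term contributing to that limit} (it encodes exactly the $x$-dependence of $\mu^x$). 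Consequently ``a second application of Lemma~\ref{L42}'' is not quite literal: the integrand is not of the form $P_s^x h(y)-\bar h$, so you need either to show directly that $\zeta_s$ converges exponentially to a stationary limit and couple this with the ergodicity of $Y_s^{x,y}$, or to argue via a two-parameter difference estimate on $\nabla_x Y_s^{x,y}$ as done in \cite{SXX}. This is routine but should be stated correctly.
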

\begin{proof}
	It is easy to see that $u(t,x,y)$ in \eqref{5.2} is a solution of \eqref{5.1}, which can be deduced by It\^{o} formula, and properties of  $u(t,\cdot,y)\in C_{b}^{1+\gamma}(\mathbb{R}^{d_{1}})$, $u(t,x,\cdot)\in C_{b}^{2+\gamma}(\mathbb{R}^{d_{2}})$ inherit from  regularities of $g(t,x,y)$, other properties follow from \cite[Proposition 3.3]{SXX}.
\end{proof}

We also need to introduce  mollification of functions which will be used to tackle the difficulities related to time derivative and different regimes. Let $\rho_{1}:\mathbb{R}\rightarrow [0,1]$, $\rho_{2}: \mathbb{R}^{ d_{1}}\rightarrow [0,1]$ be two nonnegative smooth mollifiers s.t. 

(1). $\rho_{1}\in C_{0}^{\infty}(\mathbb{R})$, supp $  \rho_{1}\subset \overline{B_{1}(0)}=\left\lbrace t\in \mathbb{R}:|t|\leq 1 \right\rbrace $, and $ \rho_{2}\in C_{0}^{\infty}(\mathbb{R}^{ d_{1}})$, supp $  \rho_{2}\subset \overline{B_{1}(0)}=\left\lbrace x\in \mathbb{R}^{ d_{1}}:|x|\leq 1 \right\rbrace $;

(2). $\int_{\mathbb{R}}\rho_{1}(t)dt=\int_{\mathbb{R}^{ d_{1}}}\rho_{2}(x)dx=1$;

(3). $\forall k\geq 0$, $\exists C_{k}>0$ s.t. $|\nabla^{k}\rho_{1}(t)|\leq C_{k}\rho_{1}(t)$, $|\nabla^{k}\rho_{2}(x)|\leq C_{k}\rho_{2}(x)$.

Then for any $n\in  \mathbb{N}^{+}$, let $\rho_{1}^{n}(t)=n^{\alpha_{1}}\rho_{1}(n^{\alpha_{1}}t)$, $\rho_{2}^{n}(x)=n^{d_{1}}\rho_{2}(nx)$, then for $g(t,x,y)$, mollification of $g(t,x,y)$ in $t$ and $x$ is defined by
\begin{equation}\label{5.56}
	g_{n}(t,x,y)=g\ast\rho_{2}^{n}\ast\rho_{1}^{n}=\int_{\mathbb{R}^{ d_{1}+1}}g(t-s, x-z, y)\rho_{2}^{n}(z)\rho_{1}^{n}(s)dzds,
\end{equation} 
in addition we define the fractional Laplacian operator  $-(-\Delta_{x})^{\frac{\alpha}{2}}f(x)$, $x,z\in \mathbb{R}^{d_{1}}$,  $0<\alpha<2$, as follows 
\begin{equation}\label{5.56-1}
	-(-\Delta_{x})^{\frac{\alpha}{2}}f(x)=P.V.\int_{\mathbb{R}^{d_{1}}}(u(x+z)-u(x)-(z,\nabla_{x}u(x))I_{|z|\leq1})\nu(dz),
\end{equation} 
where $\nu(dz)=\frac{c}{|z|^{d_{1}
		+\alpha}}dz$ is symmetric L\'{e}vy measure.
We mention that by mollification method we have $g_{n}(\cdot,x,y)\in  C_{0}^{\infty}(\mathbb{R})$, $g_{n}(t,\cdot,y)\in  C_{0}^{\infty}(\mathbb{R}^{ d_{1}})$, so we can get higher regularity estimates of $g_{n}(\cdot,\cdot,y)$ with respect to $t$ and $x$, thus we have the following lemma.
\begin{lemma}\label{L51}
	Let $g(t,x,y)\in C_{p}^{\frac{v}{\alpha_{1}}, v,\delta }$ with $0<v\leq\alpha_{1}$, $0<\delta<1$, and 
	 define $g_{n}$ by \eqref{5.56},  then we have 
	\begin{equation}\label{5.57}
		\|g(\cdot,\cdot,y)-g_{n}(\cdot,\cdot,y)\|_{\infty}\leq C\cdot n^{-v}(1+|y|),
	\end{equation} 
	\begin{equation}\label{5.58-1}
		\|\partial_{t}g_{n}(\cdot,\cdot,y)\|_{\infty}\leq C\cdot n^{\alpha_{1}-v}(1+|y|), 
			\end{equation} 
		\begin{equation}\label{5.58-4}
				\|(-\Delta_{x})^{\frac{\alpha_{1}}{2}}g_{n}(\cdot,\cdot,y)\|_{\infty}\leq C\cdot n^{\alpha_{1}-v}(1+|y|),
			\end{equation} 
				\begin{equation}\label{5.58-3}
		\|\nabla_{x}g_ {n}(\cdot,\cdot,y)\|_{\infty}\leq C\cdot n^{1-(1\wedge v)}(1+|y|),
	\end{equation} 
	we can further estimate that $ 	 \|\nabla_{x}^{2}g_{n}(\cdot,\cdot,y)\|_{\infty}\leq C\cdot n^{2-v}(1+|y|^{m}).$
\end{lemma}
\begin{proof}
The proof mainly refers to \cite[Lemma 4.1]{RX}. By definition of  H\"{o}lder derivative and a change of variable, for $0<v\leq1$, definition of $ C_{p}^{\frac{v}{\alpha_{1}}, v,\delta }$ and \eqref{cp},
\begin{equation}
		\begin{split}
	&|g(t,x,y)-g_{n}(t,x,y)|\leq \int_{\mathbb{R}^{ d_{1}+1}}	|g(t,x,y)-g(t-s,x-z,y)|\rho_{1}^{n}(s)\rho_{2}^{n}(z)dzds\\
	&\leq C\cdot\int_{\mathbb{R}^{ d_{1}+1}}	(|s|^{\frac{v}{\alpha_{1}}}+|z|^{v})(1+|y|)\rho_{1}^{n}(s)\rho_{2}^{n}(z)dzds\leq C\cdot n^{-v}(1+|y|),
	\end{split}\nonumber
\end{equation}
similar to \eqref{3.17-1}, taking $y=nz$, from the definintion of $\nu(dz)$ in \eqref{5.56-1} we observe that 
\begin{equation}
	\begin{split} \nu(dz)=\frac{c}{|z|^{d_{1}+\alpha}}dz=\frac{c}{|n^{-1}y|^{d_{1}+\alpha}}(n^{-1})^{d_{1}}dy=n^{\alpha}\frac{c}{|y|^{d_{1}+\alpha}}dy=n^{\alpha}\nu(dy),	
	\end{split}\nonumber
\end{equation}
 therefore,
\begin{equation}\label{5.56-2}
	\begin{split}
	\left| (-\Delta_{x})^{\frac{\alpha}{2}}\rho_{2}^{n}(x)\right|& =c\left| \int_{\mathbb{R}^{d_{1}}}\left( n^{d_{1}}\rho_{2}(nx+nz)-n^{d_{1}}\rho_{2}(nx)-(nz,\nabla_{x}n^{d_{1}}\rho_{2}(nx))I_{|nz|\leq1}\right) \nu(dz)\right| 	\\
&=c\cdot n^{\alpha}\cdot n^{d_{1}}\left| \int_{\mathbb{R}^{d_{1}}}\left( \rho_{2}(nx+y)-\rho_{2}(nx)-(y,\nabla_{x}\rho_{2}(nx))I_{|y|\leq1}\right) \nu(dy)\right| \\
& \leq C_{\alpha}\cdot n^{\alpha}\cdot n^{d_{1}}\rho_{2}(nx)\leq C_{\alpha}\cdot n^{\alpha}\rho_{2}^{n}(x),
	\end{split}
\end{equation}
we used definition in \eqref{5.56-1} and the fact that $\forall k\geq 0$, $\exists C_{k}>0$ s.t. $|\nabla^{k}\rho_{2}(x)|\leq C_{k}\rho_{2}(x)$ in first inequality. Consequently, by \eqref{5.56-2}
\begin{equation}
	\begin{split}
		|	(-\Delta_{x})^{\frac{\alpha_{1}}{2}}g_{n}(\cdot,\cdot,y)|&\leq \int_{\mathbb{R}^{ d_{1}+1}}	|g(t-s,x-z,y)-g(t-s,x,y)|\rho_{1}^{n}(s)|(-\Delta_{z})^{\frac{\alpha_{1}}{2}}\rho_{2}^{n}(z)|dzds\\
		&\leq C\cdot n^{\alpha_{1}}\int_{\mathbb{R}^{ d_{1}+1}}	|z|^{v}(1+|y|)\rho_{1}^{n}(s)\rho_{2}^{n}(z)dzds\leq C\cdot n^{\alpha_{1}-v}(1+|y|),
	\end{split}\nonumber
\end{equation}
furthermore,
\begin{equation}
	\begin{split}
		|\nabla_{x}^{2}g_{n}(\cdot,\cdot,y)|&\leq \int_{\mathbb{R}^{ d_{1}+1}}	|g(t-s,x-z,y)-g(t-s,x,y)|\rho_{1}^{n}(s)|\nabla^{2}_{z}\rho_{2}^{n}(z)|dzds\\
		&\leq C\cdot n^{2}\int_{\mathbb{R}^{ d_{1}+1}}	|z|^{v}(1+|y|)\rho_{1}^{n}(s)\rho_{2}^{n}(z)dzds\leq C\cdot n^{2-v}(1+|y|),
	\end{split}\nonumber
\end{equation}
\begin{equation}
	\begin{split}
		|\nabla_{x}g_{n}(\cdot,\cdot,y)|&\leq \int_{\mathbb{R}^{ d_{1}+1}}	|g(t-s,x-z,y)-g(t-s,x,y)|\rho_{1}^{n}(s)|\nabla_{z}\rho_{2}^{n}(z)|dzds\\
		&\leq C\cdot n\int_{\mathbb{R}^{ d_{1}+1}}	|z|^{v}(1+|y|)\rho_{1}^{n}(s)\rho_{2}^{n}(z)dzds\leq C\cdot n^{1-v}(1+|y|),
	\end{split}\nonumber
\end{equation}
\begin{equation}\label{5.L5}
	\begin{split}
		|\partial_{t}g_{n}(t,x,y)|&\leq \int_{\mathbb{R}^{ d_{1}}+1}	|g(t-s,x-z,y)-g(t,x-z,y)|\partial_{s}\rho_{1}^{n}(s)|\rho_{2}^{n}(z)dzds\\
		&\leq C\cdot n^{\alpha_{1}}\cdot\int_{\mathbb{R}^{ d_{1}}+1}|s|^{\frac{v}{\alpha_{1}}}\rho_{1}^{n}(s)\rho_{2}^{n}(z)(1+|y|)dzds\leq C\cdot n^{\alpha_{1}-v}(1+|y|),
	\end{split}
\end{equation}
for $1<v\leq\alpha_{1}$,
\begin{equation}
	\begin{split}
	|g(t,x,y)-g_{n}(t,x,y)|	&\leq \int_{\mathbb{R}^{ d_{1}}+1}	|g(t-s,x+z,y)+g(t-s,x-z,y)-2g(t,x,y)|\rho_{1}^{n}(s)\rho_{2}^{n}(z)dzds\\
		&\leq C\cdot\int_{\mathbb{R}^{ d_{1}}+1}	(|s|^{\frac{v}{\alpha_{1}}}+|z|^{v})(1+|y|)\rho_{1}^{n}(s)\rho_{2}^{n}(z)dzds\leq C\cdot n^{-v}(1+|y|),\nonumber
	\end{split}
\end{equation} 
applying \eqref{5.56-2} again, we have
\begin{equation}
\begin{split}
	|(-\Delta_{x})^{\frac{\alpha_{1}}{2}}g_{n}(\cdot,\cdot,y)|&\leq \int_{\mathbb{R}^{ d_{1}+1}}	|\nabla_{x}g(t-s,x-z,y)-\nabla_{x}g(t-s,x,y)|\rho_{1}^{n}(s)|(-\Delta_{z})^{\frac{\alpha_{1}-1}{2}}\rho_{2}^{n}(z)|dzds\\
	&\leq C\cdot n^{\alpha_{1}-1}\int_{\mathbb{R}^{ d_{1}+1}}	|z|^{v-1}(1+|y|)\rho_{1}^{n}(s)\rho_{2}^{n}(z)dzds\leq C\cdot n^{\alpha_{1}-v}(1+|y|),
\end{split}\nonumber
\end{equation}
\begin{equation}
	\begin{split}
		|\nabla_{x}^{2}g_{n}(\cdot,\cdot,y)|&\leq \int_{\mathbb{R}^{ d_{1}+1}}	|\nabla_{x}g(t-s,x-z,y)-\nabla_{x}g(t-s,x,y)|\rho_{1}^{n}(s)|\nabla_{z}\rho_{2}^{n}(z)|dzds\\
		&\leq C\cdot n\int_{\mathbb{R}^{ d_{1}+1}}	|z|^{v-1}(1+|y|)\rho_{1}^{n}(s)\rho_{2}^{n}(z)dzds\leq C\cdot n^{2-v}(1+|y|),
	\end{split}\nonumber
\end{equation}
\begin{equation}\label{5.58-32}
	\begin{split}
		|\nabla_{x}g_{n}(\cdot,\cdot,y)|&\leq \int_{\mathbb{R}^{ d_{1}+1}}	|\nabla_{x}g(t-s,x-z,y)|\rho_{1}^{n}(s)|\rho_{2}^{n}(z)|dzds\\
		&\leq C\cdot \int_{\mathbb{R}^{ d_{1}+1}}	(1+|y|)\rho_{1}^{n}(s)\rho_{2}^{n}(z)dzds\leq C\cdot (1+|y|),
	\end{split}
\end{equation}
the proof of estimate related to $\partial_{t}g_{n}(t,x,y)$ can be proved as \eqref{5.L5}.
\end{proof}
\begin{remark}
 Above results claim that due to the definition of $C_{p}^{\frac{v}{\alpha_{1}}, v,\sigma }$ and \eqref{cp}, these estimates we need are uniformly bounded  both in $t$ and $x$, and  bounded from above by $|y|$ of order $1$, this conculsion plays important role in strong and weak convergence estimates, which is also consistent with moment estimates in Theorem $\ref{T32}$ when $p=1$, and regularity estimates in Theorem $\ref{T51}$, $p$ in Theorem $\ref{T51-1}$, Theorem $\ref{T61}$, Theorem $\ref{T61-1}$ where orders of $|y|$ are $1$.
\end{remark}

\begin{remark}
Although the relationship $	\|(-\Delta_{x})^{\frac{\alpha_{1}}{2}}g_{n}(\cdot,\cdot,y)\|_{\infty}\leq \|\nabla_{x}^{2}g_{n}(\cdot,\cdot,y)\|_{\infty}$ provides computational convenience, we will employ the more precise estimates \eqref{5.58-4} in subsequent analysis to achieve sharper results.
\end{remark}

Actually, not need all regularity estimates in Theorem \ref{T51} are necessary in LLN type estimate and Regime 1, 2 in CLT type estimate, hence we assume lower regularity to relax the conditions.

Let $g(t,x,y)$  satisfies Lipschitz condition, growth condition, dissipative condition,
\begin{equation}\label{5.1-1}
	\mathcal{L}_{2}(x,y)u(t,x,y)+g(t,x,y)-\bar{g}(t,x)=0,
\end{equation}
here $ \bar{g}(t,x)=\int_{\mathbb{R}^{d_{2}}}g(t,x,y)\mu^{x}(dy),$ then we have the following regularity estimates.
\begin{theorem}\label{T51-1}
	  $\forall x\in \mathbb{R}^{d_{1}}, $  $y\in \mathbb{R}^{d_{2}} $,  $t\in [0,T]$, $g(t,x,\cdot)\in C_{b}^{2}(\mathbb{R}^{d_{2}})$, 
	   we define 
	\begin{equation}\label{5.2-1}
		\begin{split}
			u(t,x,y)=\int^{\infty}_{0}\left( \mathbb{E}g(t,x,Y_{s}^{x,y})-\bar{g}(t,x)\right) ds,
		\end{split}
	\end{equation}
	then $u(t,x,y)$ is a solution of \eqref{5.1-1},  
	$\exists C_{T}>0$ s.t.,
	\begin{equation}\label{5.3-1}
		\sup_{t\in [0,T]}\sup_{x\in \mathbb{R}^{d_{1}}}|u(t,x,y)|\leq C_{T}(1+|y|),
	\end{equation}
	\begin{equation}\label{5.4-1}
		\sup_{t\in [0,T]} \sup_{\substack{x\in \mathbb{R}^{d_{1}}, y\in \mathbb{R}^{d_{2}}}}|\nabla_{y}u(t,x,y)|\leq C_{T}.
	\end{equation}
\end{theorem}
\begin{proof}
Similar to Theorem \ref{T51}, our proof is based on \cite[Proposition 3.3]{SXX}. We can see that $u(t,x,y)$ is a solution of \eqref{5.1-1} can be deduced by It\^{o} formula. 

From \eqref{5.2-1} and Lemma \ref{L42},
\begin{equation}
	\begin{split}
		\sup_{t\in [0,T]}\sup_{x\in \mathbb{R}^{d_{1}}} |u(t,x,y)|\leq \int_{0}^{\infty}|\mathbb{E}g(t,x,Y_{s}^{x,y})-\bar{g}(t,x)|ds
		&\leq  C_{T}(1+|y|)\int_{0}^{\infty}e^{-\frac{\beta s}{2}}ds\leq  C_{T}(1+|y|),
	\end{split}\nonumber
\end{equation}
so \eqref{5.3-1} is asserted.
Moreover by Leibniz chain rule,
\begin{equation}
	\begin{split}
	 \nabla_{y}u(t,x,y)= \int_{0}^{\infty}\mathbb{E}\nabla_{y}g(t,x,Y_{s}^{x,y})\nabla_{y}Y_{s}^{x,y}ds,
	\end{split}\nonumber
\end{equation}
here $ \nabla_{y}Y_{s}^{x,y}$ satisfies
\begin{equation}
	\left\{
	\begin{aligned}
		&d\nabla_{y}Y_{s}^{x,y}= \nabla_{y}f(t,x,Y_{s}^{x,y})\cdot \nabla_{y}Y_{s}^{x,y}ds\\
		&\nabla_{y}Y_{0}^{x,y}=\frac{Y_{0}^{x,y_{1}}-Y_{0}^{x,y_{2}}}{y_{1}-y_{2}}=\frac{y_{1}-y_{2}}{y_{1}-y_{2}}=I,
	\end{aligned}\nonumber
	\right.
\end{equation}
and by Lemma \ref{L41}, we have 
\begin{equation}
	\begin{aligned}
 \sup_{\substack{x\in \mathbb{R}^{d_{1}}, y\in \mathbb{R}^{d_{2}}}}|\nabla_{y}Y_{s}^{x,y}|\leq C_{T}e^{-\frac{\beta s}{2}},\ s>0,
	\end{aligned}\nonumber
\end{equation}
with the boundness of $\nabla_{y}g(t,x,y)$, we can deduce that $\exists C_{T}>0 $ s.t.,
$$	\sup_{t\in [0,T]}  \sup_{\substack{x\in \mathbb{R}^{d_{1}}, y\in \mathbb{R}^{d_{2}}}}|\nabla_{y}u(t,x,y)|\leq C_{T},$$
we obtain \eqref{5.4-1}.
\end{proof}

\subsection{LLN type estimate for $ b(t,x,y)$}

In this section, we deal with the difficulty arised from $b(t,x,y)-\bar{b}(t,x)$, which satisfies Centering condition, i.e., $\int_{\mathbb{R}^{d_{2}}}b(t,x,y)-\bar{b}(t,x)\mu^{x}(dy)=0$,  then we have the following theorem. Recall that $(a)^{+}=max\{a, 0\}$.
\begin{theorem}\label{T52}
	Suppose that $b(\cdot,\cdot,\cdot)\in C_{p}^{\frac{v}{\alpha_{1}},v, 2+\gamma}\cap C_{b}^{2}(\mathbb{R}^{d_{2}})$, $v\in((\alpha_{1}-\alpha_{2})^{+},\alpha_{1}]$, $\gamma\in(0,1)$ satisfies Lipschitz condition, growth condition, dissipative condition, then we have 
	\begin{equation}\label{5.44}
		\begin{split}
		\mathbb{E}&\left( \sup_{t\in [0,T]}\left| \int_{0}^{t}\left( b(s,X_{s}^{\varepsilon},Y_{s}^{\varepsilon})-\bar{b}(s,X_{s}^{\varepsilon})\right)ds\right|^{p} \right)\\
		&\leq  C_{T,p}\left(\eta_{\varepsilon}^{p\left[ \left( \frac{v}{\alpha_{2}}\right) \wedge \left( 1-\frac{1\vee(\alpha_{1}-v)}{\alpha_{2}}\right) \right] }+\left( \frac{\eta_{\varepsilon}}{\beta_{\varepsilon}}\right)^{p} + \left( \frac{\eta_{\varepsilon}^{1-\frac{1-(1\wedge v)}{\alpha_{2}}}}{\gamma_{\varepsilon}}\right)^{p}\right).
		\end{split}
	\end{equation}
\end{theorem}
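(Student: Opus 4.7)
The plan is to use the nonlocal corrector (Poisson equation) method; since $b$ is only $v$-Hölder in $x$ while Theorem~\ref{T51} requires $C^{1+\gamma}$ regularity, I first mollify. Define $b_n=b\ast\rho_2^n\ast\rho_1^n$ as in \eqref{5.56} and split
\begin{equation*}
\int_0^t[b-\bar b](s,X_s^\varepsilon,Y_s^\varepsilon)\,ds=\int_0^t[b-b_n]ds+\int_0^t[b_n-\bar b_n]ds+\int_0^t[\bar b_n-\bar b]ds.
\end{equation*}
By Lemma~\ref{L51} and the boundedness of $b\in C_b^{v/\alpha_1,v,2+\gamma}$, one has $\|b-b_n\|_\infty\le C n^{-v}$ uniformly in $y$, so the first and third integrals contribute $O(n^{-v})$. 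For the middle (true LLN) integral I invoke Theorem~\ref{T51} to obtain a classical solution $u_n(t,x,y)$ of the Poisson equation $\mathcal{L}_2(x,y)u_n+b_n-\bar b_n=0$; the mollification makes $b_n$ smooth enough for this to apply.

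Applying It\^o's formula to $u_n(s,X_s^\varepsilon,Y_s^\varepsilon)$, using $\tfrac{1}{\eta_\varepsilon}\mathcal{L}_2 u_n=-\tfrac{1}{\eta_\varepsilon}(b_n-\bar b_n)$, and rearranging yields
\begin{equation*}
\begin{split}
\int_0^t(b_n-\bar b_n)\,ds&=-\eta_\varepsilon\bigl[u_n(t,X_t^\varepsilon,Y_t^\varepsilon)-u_n(0,x,y)\bigr]+\eta_\varepsilon\int_0^t\bigl(\partial_s u_n+\mathcal{L}_1 u_n\bigr)ds\\
&\quad+\frac{\eta_\varepsilon}{\gamma_\varepsilon}\int_0^t H\cdot\nabla_x u_n\,ds+\frac{\eta_\varepsilon}{\beta_\varepsilon}\int_0^t c\cdot\nabla_y u_n\,ds+\eta_\varepsilon M_t,
\end{split}
\end{equation*}
where $M_t$ collects the compensated Poisson martingales from $L^1$ and from the rescaled $L^2$. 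Regularity propagates through the probabilistic representation \eqref{5.2}: combining Lemma~\ref{L51} with the exponential mixing of $Y^{x,y}$ gives the $n$-dependent bounds $\|u_n\|_\infty\lesssim 1+|y|$, $\|\nabla_y u_n\|_\infty\lesssim 1$, $\|\nabla_x u_n\|\lesssim n^{1-(1\wedge v)}(1+|y|)$, and $\|\partial_t u_n\|+\|(-\Delta_x)^{\alpha_1/2}u_n\|\lesssim n^{\alpha_1-v}(1+|y|)$.

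Each term is then estimated in $L^p$. The boundary term is controlled by $\mathbb{E}\sup_t|Y_t^\varepsilon|^p\le C\eta_\varepsilon^{-p/\alpha_2}$ from Lemma~\ref{L43}, contributing $\eta_\varepsilon^{p(1-1/\alpha_2)}$. For the remaining time integrals I pull the supremum outside via $\sup_t|\int_0^t(\cdot)|\le\int_0^T|\cdot|$ and then use the time-uniform moment $\sup_t\mathbb{E}|Y_t^\varepsilon|^p\le C(1+|y|^p)$ from \eqref{3.6}, thereby avoiding the $\eta_\varepsilon^{-p/\alpha_2}$ blow-up: this gives $\eta_\varepsilon n^{\alpha_1-v}$ from $\partial_s u_n$ and from $\mathcal{L}_1 u_n$, $\tfrac{\eta_\varepsilon}{\gamma_\varepsilon}n^{1-(1\wedge v)}$ from $H\nabla_x u_n$, and $\tfrac{\eta_\varepsilon}{\beta_\varepsilon}$ from $c\nabla_y u_n$; BDG controls $\eta_\varepsilon M_t$ at the same or lower order. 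Choosing $n=\eta_\varepsilon^{-1/\alpha_2}$ to balance the mollification error $n^{-v}=\eta_\varepsilon^{v/\alpha_2}$ against $\eta_\varepsilon n^{\alpha_1-v}=\eta_\varepsilon^{1-(\alpha_1-v)/\alpha_2}$ (itself dominated by the boundary term $\eta_\varepsilon^{1-1/\alpha_2}$ when $\alpha_1-v<1$) yields the composite exponent $(v/\alpha_2)\wedge\bigl(1-(1\vee(\alpha_1-v))/\alpha_2\bigr)$; the $H\nabla_x u_n$ contribution becomes $\eta_\varepsilon^{1-(1-(1\wedge v))/\alpha_2}/\gamma_\varepsilon$ and the $c\nabla_y u_n$ contribution remains $\eta_\varepsilon/\beta_\varepsilon$, matching \eqref{5.44}. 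The main technical obstacle is obtaining the sharp $n^{\alpha_1-v}$ bound on $(-\Delta_x)^{\alpha_1/2}u_n$ (rather than the wasteful $n^{2-v}$ from $\|\nabla_x^2 b_n\|_\infty$) and on $\partial_t u_n$, since these scalings are exactly what deliver the claimed rate; a secondary difficulty is the $L^2$-component of $M_t$, whose $\eta_\varepsilon^{-1/\alpha_2}$ jump scaling must be arranged through BDG and the bounded $\nabla_y u_n$ so that its contribution does not degrade the exponents already identified.
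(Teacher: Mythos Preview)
Your overall strategy---corrector via the nonlocal Poisson equation, mollification to compensate for the low $(t,x)$-regularity, It\^{o}'s formula, term-by-term estimates, and the balancing choice $n=\eta_\varepsilon^{-1/\alpha_2}$---matches the paper exactly, and your identification of the final rates is correct. The paper, however, reverses the order of your first two steps: it solves the Poisson equation \eqref{5.45} with the \emph{original} $b$ (via Theorem~\ref{T51-1}, which needs only $b\in C_b^{v,2+\gamma}$) to obtain $u$, and \emph{then} mollifies the solution, setting $u_n:=u\ast\rho_1^n\ast\rho_2^n$.

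This ordering is not merely cosmetic. Your $u_n$, the Poisson solution with mollified source $b_n$, inherits its $x$-regularity not only from $b_n$ but also from the frozen process $Y^{x,y}$ and the invariant measure $\mu^x$ in the representation~\eqref{5.2}, both of which depend on $x$ through $f(x,\cdot)$. Since the standing hypothesis is only $f\in C_b^{v,2+\gamma}$, your $u_n$ is at best $C^v$ in $x$; for $v\le\alpha_1$ this does not suffice to apply It\^{o}'s formula against the $\alpha_1$-stable driver, nor to make $(-\Delta_x)^{\alpha_1/2}u_n$ well-defined, and Lemma~\ref{L51} (which concerns mollifications of a given function, not Poisson solutions with mollified sources) does not directly deliver the claimed $n^{\alpha_1-v}$ bound on it. The paper's $u_n$, by contrast, is $C^\infty$ in $(t,x)$ by construction, so It\^{o} applies; Lemma~\ref{L51} with $g=u$ then gives the $n^{\alpha_1-v}$ and $n^{1-(1\wedge v)}$ bounds straight away, and the mollification error collapses to the single term $I_0=\mathbb{E}\int_0^T|\mathcal{L}_2 u_n-\mathcal{L}_2 u|^p\,ds\le C_{T,p}\,n^{-pv}$. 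Swapping to this order repairs your argument, after which your remaining estimates (including the BDG control of the $L^2$-martingale at order $\eta_\varepsilon^{p(1-1/\alpha_2)}$) go through as written.
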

\begin{proof}
Our method mainly refers to \cite[Lemma 4.2]{RX}.
Since we let $b(\cdot,\cdot,\cdot)\in C_{p}^{\frac{v}{\alpha_{1}},v, 2+\gamma}\cap C_{b}^{2}(\mathbb{R}^{d_{2}})$, from PDE theory and Theorem \ref{T51-1}, we know that there exist  $ u(\cdot,\cdot,\cdot)\in C_{p}^{\frac{v}{\alpha_{1}},v, 2+\gamma}\cap C_{b}^{2}(\mathbb{R}^{d_{2}})$ such that
\begin{equation}\label{5.45}
	\mathcal{L}_{2}(x,y)u(t,x,y)+b(t,x,y)-\bar{b}(t,x)=0.
\end{equation}

Set $u_{n}$ be the mollifyer of $u$, which is solution of \eqref{5.45}, by It\^{o} formula we deduce that
\begin{equation}\label{5.46}
	\begin{split}
		u_{n}(t,X_{t}^{\varepsilon},Y_{t}^{\varepsilon})
	&=u_{n}(x,y)+\int_{0}^{t}\partial_{s}u_{n}(s,X_{s}^{\varepsilon},Y_{s}^{\varepsilon})ds+\int_{0}^{t}\mathcal{L}_{1}(s,x,y)u_{n}(s,X_{s}^{\varepsilon},Y_{s}^{\varepsilon})ds\\
	&+\frac{1}{\eta_{\varepsilon}}\int_{0}^{t}\mathcal{L}_{2}(x,y)u_{n}(s,X_{s}^{\varepsilon},Y_{s}^{\varepsilon})ds+\frac{1}{\gamma_{\varepsilon}}\int_{0}^{t}\mathcal{L}_{3}(s,x,y)u_{n}(s,X_{s}^{\varepsilon},Y_{s}^{\varepsilon})ds\\
	&+\frac{1}{\beta_{\varepsilon}}\int_{0}^{t}\mathcal{L}_{4}(x,y)u_{n}(s,X_{s}^{\varepsilon},Y_{s}^{\varepsilon})ds+M_{n,t}^{1,\varepsilon}+M_{n,t}^{2,\varepsilon},
	\end{split}
\end{equation}
here $M_{n,t}^{1,\varepsilon}$, $M_{n,t}^{2,\varepsilon}$ are two $ \mathcal{F}_{t}$ martingales defined as
\begin{equation}\label{5.46-1} M_{n,t}^{1,\varepsilon}=\int_{0}^{t}\int_{\mathbb{R}^{d_{1}}}(u_{n}(s-,X_{s-}^{\varepsilon}+z,Y_{s-}^{\varepsilon})-u_{n}(s-,X_{s-}^{\varepsilon},Y_{s-}^{\varepsilon}))\tilde{N}^{1}(ds,dz),
\end{equation}
and
\begin{equation}\label{5.46-2} M_{n,t}^{2,\varepsilon}=\int_{0}^{t}\int_{\mathbb{R}^{d_{2}}}(u_{n}(s-,X_{s-}^{\varepsilon},Y_{s-}^{\varepsilon}+\eta_{\varepsilon}^{-\frac{1}{\alpha_{2}}}z)-u_{n}(s-,X_{s-}^{\varepsilon},Y_{s-}^{\varepsilon}))\tilde{N}^{2}(ds,dz),
\end{equation}
where $\tilde{N}^{1}$, $\tilde{N}^{2}$ are compensated Poisson measure defined in Section 3.

Above calculations lead us to 
\begin{equation}\label{5.47}
	\begin{split}
		&\int_{0}^{t}\mathcal{L}_{2}(x,y)u_{n}(s,X_{s}^{\varepsilon},Y_{s}^{\varepsilon})ds\\
	&=-\eta_{\varepsilon}\left[ u_{n}(x,y)-u_{n}(s,X_{t}^{\varepsilon},Y_{t}^{\varepsilon})+\int_{0}^{t}\partial_{s}u_{n}(s,X_{s}^{\varepsilon},Y_{s}^{\varepsilon})ds
	+\int_{0}^{t}\mathcal{L}_{1}(s,x,y)u_{n}(s,X_{s}^{\varepsilon},Y_{s}^{\varepsilon})ds\right.	\\
	&\left. +\frac{1}{\gamma_{\varepsilon}}\int_{0}^{t}\mathcal{L}_{3}(s,x,y)u_{n}(s,X_{s}^{\varepsilon},Y_{s}^{\varepsilon})ds+\frac{1}{\beta_{\varepsilon}}\int_{0}^{t}\mathcal{L}_{4}(x,y)u_{n}(s,X_{s}^{\varepsilon},Y_{s}^{\varepsilon})ds +M_{n,t}^{1,\varepsilon}+M_{n,t}^{2,\varepsilon}\right],
	\end{split}
\end{equation}
in addition from the non-local Poisson equation \eqref{5.45},
\begin{align}
&\mathbb{E}\left( \sup_{t\in [0,T]}\left| \int_{0}^{t} b(s,X_{s}^{\varepsilon},Y_{s}^{\varepsilon})-\bar{b}(s,X_{s}^{\varepsilon})ds\right| ^{p}\right)\leq  \mathbb{E}\left(  \int_{0}^{T}\left|\mathcal{L}_{2}(x,y)u_{n}(s,X_{s}^{\varepsilon},Y_{s}^{\varepsilon})-\mathcal{L}_{2}(x,y)u(s,X_{s}^{\varepsilon},Y_{s}^{\varepsilon})\right|^{p}ds \right)\nonumber\\
&+ C_{T,p}\cdot \eta_{\varepsilon}^{p}\left[ \mathbb{E}\left( \sup_{t\in [0,T]}|u_{n}(x,y)-u_{n}(t,X_{t}^{\varepsilon},Y_{t}^{\varepsilon})|^{p}\right) +\mathbb{E}\left( \int_{0}^{T}|\mathcal{L}_{1}(s,x,y)u_{n}(s,X_{s}^{\varepsilon},Y_{s}^{\varepsilon})|^{p}ds\right)\right.	\nonumber\\
&\left.+\frac{1}{\gamma_{\varepsilon}^{p}}\mathbb{E}\left( \int_{0}^{T}|\mathcal{L}_{3}(s,x,y)u_{n}(s,X_{s}^{\varepsilon},Y_{s}^{\varepsilon})|^{p}ds\right)+\frac{1}{\beta_{\varepsilon}^{p}}\mathbb{E}\left( \int_{0}^{T}|\mathcal{L}_{4}(x,y)u_{n}(s,X_{s}^{\varepsilon},Y_{s}^{\varepsilon})|^{p}ds\right)\right.\nonumber	\\
&\left.+\mathbb{E}\left( \sup_{t\in [0,T]}|M_{n,t}^{1,\varepsilon}|^{p}\right)+\mathbb{E}\left( \sup_{t\in [0,T]}|M_{n,t}^{2,\varepsilon}|^{p}\right) + \mathbb{E}\left(  \int_{0}^{T}\left|\partial_{s}u_{n}(s,X_{s}^{\varepsilon},Y_{s}^{\varepsilon})\right|^{p}ds \right)\right] \nonumber\\
&=I_{0}+C_{T,p}\cdot \eta_{\varepsilon}^{p}\left( I_{1}+I_{2}+I_{3}+I_{4}+I_{5}+I_{6}+I_{7}\right),\label{5.48}
\end{align}
we will estiamte the above terms respectively. 

Since $2+\gamma> \delta$, we can use \eqref{3.6}, \eqref{5.57} in Lemma \ref{L51}, for $I_{0}$, analogous to proof of  \cite[Lemma 4.2]{RX},
\begin{equation}\label{5.49-1}
	\begin{split}
		I_{0}&=\mathbb{E}\left(  \int_{0}^{T}\left|\mathcal{L}_{2}(x,y)u_{n}(s,X_{s}^{\varepsilon},Y_{s}^{\varepsilon})-\mathcal{L}_{2}(x,y)u(s,X_{s}^{\varepsilon},Y_{s}^{\varepsilon})\right|^{p}ds \right)\\
		&\leq  C_{T,p}n^{-pv}\mathbb{E}\int_{0}^{T}(\left|1+|Y_{s}^{\varepsilon}|^{p}\right|)ds\leq C_{T,p}(1+|y|^{p})n^{-pv},
	\end{split}
\end{equation}
by definition of $u_{n}$, \eqref{5.3-1} in Theorem \ref{T51-1}, and Lemma \ref{L43}
\begin{equation}\label{5.49}
	\begin{split}
		I_{1}&=\mathbb{E}\left( \sup_{t\in [0,T]}|u_{n}(x,y)-u_{n}(t,X_{t}^{\varepsilon},Y_{t}^{\varepsilon})|^{p}\right)\leq \mathbb{E}\left( \sup_{t\in [0,T]}|u(x,y)-u(t,X_{t}^{\varepsilon},Y_{t}^{\varepsilon})|^{p}\right) \\
		&\leq C_{T,p}(1+|y|^{p})+\mathbb{E}\left( \sup_{t\in [0,T]}|Y_{t}^{\varepsilon}|^{p}\right)
	\leq C_{T,p}\eta_{\varepsilon}^{-\frac{p}{\alpha_{2}}}(1+|y|^{p}),
	\end{split}
\end{equation}
for $I_{2}$,  since we have growth condition $|b(t,x,y)| \leq C_{4}(1+K_{t})$, by \eqref{kt}, \eqref{5.58-4} and \eqref{5.58-3} in Lemma \ref{L51},
\begin{equation}\label{5.50}
	\begin{split}
	I_{2}&=\mathbb{E}\left(\int_{0}^{T}|\mathcal{L}_{1}(s,x,y)u_{n}(s,X_{s}^{\varepsilon},Y_{s}^{\varepsilon})|^{p}ds\right)\leq C_{T,p}  \mathbb{E}\left( \int_{0}^{T}|(b(s,X_{s}^{\varepsilon},Y_{s}^{\varepsilon}),\nabla_{x}u_{n}(s,X_{s}^{\varepsilon},Y_{s}^{\varepsilon}))|^{p}ds\right)	\\
	&+ C_{T,p} \mathbb{E}\left(\int_{0}^{T}|-(-\Delta_{x})^{\frac{\alpha_{1}}{2}}u_{n}(s,X_{s}^{\varepsilon},Y_{s}^{\varepsilon})|^{p}ds\right)\\
	&\leq C_{T,p}n^{p(\alpha_{1}-v)}(1+|y|^{p}),
	\end{split}
\end{equation}
for  $I_{3}$, from growth condition $|H(t,x,y)| \leq C_{4}(1+K_{t})$, \eqref{kt}, \eqref{3.4}, \eqref{3.6}, \eqref{5.58-3},
\begin{align}
		I_{3}&=\mathbb{E}\left(\frac{1}{\gamma_{\varepsilon}^{p}}\int_{0}^{T}|H(s,X_{s}^{\varepsilon},Y_{s}^{\varepsilon})\nabla_{x}u_{n}(s,X_{s}^{\varepsilon},Y_{s}^{\varepsilon})|^{p}ds\right)\leq  \frac{C_{T,p}}{\gamma_{\varepsilon}^{p}}n^{1-(1\wedge v)}  \mathbb{E}\left(\int_{0}^{T}\left( 1+|X_{s}^{\varepsilon}|+|Y_{s}^{\varepsilon}|\right)^{p}ds\right)\nonumber\\
		&\leq \frac{C_{T,p}}{\gamma_{\varepsilon}^{p}}n^{1-(1\wedge v)} (1+|y|^{p}),\label{5.51}
\end{align}
and for $I_{4}$, recall that $|c(x,y)|_{\infty} \leq C_{5}$ similar to above analysis,
\begin{equation}\label{5.52}
	\begin{split}
		I_{4}&=\mathbb{E}\left(\frac{1}{\beta_{\varepsilon}^{p}}\int_{0}^{T}|\mathcal{L}_{4}(x,y)u_{n}(s,X_{s}^{\varepsilon},Y_{s}^{\varepsilon})|^{p}ds\right)\leq  \frac{C_{T,p}}{\beta_{\varepsilon}^{p}} (1+|y|^{p}).
	\end{split}
\end{equation}

We can deduce from Burkholder-Davies-Gundy's inequality, \eqref{3.6}, \eqref{5.58-3}
\begin{align}
	&\mathbb{E}\left( \sup_{t\in [0,T]}|M_{n,t}^{1,\varepsilon}|^{p}\right)\leq C_{T,p}
	\mathbb{E}\left(\sup_{t\in [0,T]}\left| \int_{0}^{t}\left(\int_{|z|\leq 1}u_{n}(s,X_{s}^{\varepsilon}+z,Y_{s}^{\varepsilon})-u_{n}(s,X_{s}^{\varepsilon},Y_{s}^{\varepsilon})\tilde{N}_{1}(ds,dz) \right)\right| ^{p}\right)\nonumber\\
	&+ C_{T,p}
	\mathbb{E}\left(\sup_{t\in [0,T]}\left| \int_{0}^{t}\left(\int_{|z|> 1}u_{n}(s,X_{s}^{\varepsilon}+z,Y_{s}^{\varepsilon})-u_{n}(s,X_{s}^{\varepsilon},Y_{s}^{\varepsilon})\tilde{N}_{1}(ds,dz) \right)\right| ^{p}\right)\nonumber\\
	&\leq  C_{T,p}  \int_{0}^{T}\mathbb{E}\left[\left(\int_{|z|\leq 1}|z\nabla_{x} u_{n}(s,X_{t}^{\varepsilon},Y_{t}^{\varepsilon})|^{2}\nu_{1}(dz) \right)^{\frac{p}{2}}+\int_{|z|> 1}|z\nabla_{x} u_{n}(s,X_{t}^{\varepsilon},Y_{t}^{\varepsilon})|^{p}\nu_{1}(dz)\right]ds \nonumber\\
	&\leq C_{T,p}n^{1-(1\wedge v)}  \int_{0}^{T}\mathbb{E}\left[\left(\int_{|z|\leq 1}|z|^{2}(1+|Y_{s}^{\varepsilon}|^{2})\nu_{1}(dz) \right)^{\frac{p}{2}}+\int_{|z|> 1}|z|^{p}(1+|Y_{s}^{\varepsilon}|^{p})\nu_{1}(dz) \right]ds\nonumber\\
	& \leq C_{T,p}n^{1-(1\wedge v)}(1+|y|^{p}),\label{5.53}
\end{align}
  since $1+\gamma> 1> \delta$,   $ u(\cdot,\cdot,\cdot)\in C_{p}^{\frac{v}{\alpha_{1}},v, 2+\gamma}\cap C_{b}^{2}(\mathbb{R}^{d_{2}})$, then $ \nabla_{y}u_{n}=(\nabla_{y}u)\ast\rho_{2}^{n}\ast\rho_{1}^{n}\in C_{p}^{\frac{v}{\alpha_{1}},v, 1+\gamma}\cap C_{b}^{1}(\mathbb{R}^{d_{2}})$,  with \eqref{5.4-1} in Theorem \ref{T51-1},
\begin{align}
		&\mathbb{E}\left( \sup_{t\in [0,T]}|M_{n,t}^{2,\varepsilon}|^{p}\right)\leq C_{T,p}
	\mathbb{E}\left(\sup_{t\in [0,T]}\left| \int_{0}^{t}\left(\int_{|z|\leq 1}u_{n}(s,X_{s}^{\varepsilon},Y_{s}^{\varepsilon}+\eta_{\varepsilon}^{-\frac{1}{\alpha_{2}}}z)-u_{n}(s,X_{s}^{\varepsilon},Y_{s}^{\varepsilon})\tilde{N}_{2}(ds,dz) \right)\right| ^{p}\right)\nonumber\\
	&+ C_{T,p}
	\mathbb{E}\left(\sup_{t\in [0,T]}\left| \int_{0}^{t}\left(\int_{|z|> 1}u_{n}(s,X_{s}^{\varepsilon},Y_{s}^{\varepsilon}+\eta_{\varepsilon}^{-\frac{1}{\alpha_{2}}}z)-u_{n}(s,X_{s}^{\varepsilon},Y_{s}^{\varepsilon})\tilde{N}_{2}(ds,dz) \right)\right| ^{p}\right)\nonumber\\
	&\leq  C_{T,p} \eta_{\varepsilon}^{-\frac{p}{\alpha_{2}}} \int_{0}^{T}\mathbb{E}\left[\left(\int_{|z|\leq 1}|z\nabla_{y} u_{n}(s,X_{t}^{\varepsilon},Y_{t}^{\varepsilon})|^{2}\nu_{2}(dz) \right)^{\frac{p}{2}}+\int_{|z|> 1}|z\nabla_{y} u_{n}(s,X_{t}^{\varepsilon},Y_{t}^{\varepsilon})|^{p}\nu_{2}(dz) \right]ds \nonumber\\
	&\leq C_{T,p}\eta_{\varepsilon}^{-\frac{p}{\alpha_{2}}}  \int_{0}^{T}\left[\left(\int_{|z|\leq 1}|z|^{2}\nu_{2}(dz) \right)^{\frac{p}{2}}+\int_{|z|> 1}|z|^{p}\nu_{2}(dz) \right]ds \leq C_{T,p}\eta_{\varepsilon}^{-\frac{p}{\alpha_{2}}},\label{5.54}
\end{align}
for $I_{7}$, by \eqref{5.58-1},
\begin{equation}\label{5.55-1}
	\begin{split}
\mathbb{E}\left(  \int_{0}^{T}\left|\partial_{s}u_{n}(s,X_{s}^{\varepsilon},Y_{s}^{\varepsilon})\right|^{p}ds
\right)  \leq C_{T,p}n^{p(\alpha_{1}-v)}(1+|y|^{p}),
	\end{split}
\end{equation}
combining \eqref{5.49-1}-\eqref{5.55-1} together, take $n=\eta_{\varepsilon}^{-\frac{1}{\alpha_{2}}}$,
\begin{align}
		&\mathbb{E}\left( \sup_{t\in [0,T]}\left| \int_{0}^{t}\left( b(s,X_{s}^{\varepsilon},Y_{s}^{\varepsilon})-\bar{b}(s,X_{s}^{\varepsilon})\right)ds\right|^{p} \right)\nonumber\\
		&\leq C_{T,p}\left( \eta_{\varepsilon}^{p(1-\frac{1}{\alpha_{2}})}+\eta_{\varepsilon}^{p(1-\frac{\alpha_{1}-v}{\alpha_{2}})}+\eta_{\varepsilon}^{\frac{pv}{\alpha_{2}}}+\left( \frac{\eta_{\varepsilon}}{\gamma_{\varepsilon}}\right)^{p}+\left( \frac{\eta_{\varepsilon}}{\beta_{\varepsilon}}\right)^{p}+\left( \frac{\eta_{\varepsilon}^{1-\frac{1-(1\wedge v)}{\alpha_{2}}}}{\gamma_{\varepsilon}}\right)^{p} \right)\nonumber\\
		&\leq  C_{T,p}\left(\eta_{\varepsilon}^{\frac{pv}{\alpha_{2}}}+\eta_{\varepsilon}^{p(1-\frac{1\vee(\alpha_{1}-v)}{\alpha_{2}})}+\left( \frac{\eta_{\varepsilon}}{\beta_{\varepsilon}}\right)^{p} + \left( \frac{\eta_{\varepsilon}^{1-\frac{1-(1\wedge v)}{\alpha_{2}}}}{\gamma_{\varepsilon}}\right)^{p}\right)\nonumber\\
		&\leq  C_{T,p}\left(\eta_{\varepsilon}^{p\left[ \left( \frac{v}{\alpha_{2}}\right) \wedge \left( 1-\frac{1\vee(\alpha_{1}-v)}{\alpha_{2}}\right) \right] }+\left( \frac{\eta_{\varepsilon}}{\beta_{\varepsilon}}\right)^{p} + \left( \frac{\eta_{\varepsilon}^{1-\frac{1-(1\wedge v)}{\alpha_{2}}}}{\gamma_{\varepsilon}}\right)^{p}\right),\label{5.55}
\end{align}
we used the fact that   $\eta_{\varepsilon}=o(\eta_{\varepsilon}^{1-\frac{1}{\alpha_{2}}})$ in second inequality.
\end{proof}

\subsection{CLT type estimate for $\frac{1}{\gamma_{\varepsilon}}H(t,x,y)$}

We will discuss this section in four regimes, which divided by the relationships among $\gamma_{\varepsilon}$, $\beta_{\varepsilon}$, $\eta_{\varepsilon}$.
We assume that $H(t,x,y)$ satisfies Centering condition in \eqref{2.13}, i.e., $\int_{\mathbb{R}^{ d_{2}}}H(t,x,y)\mu^{x}(dy)=0,$
here $\mu^{x}$ is the invariant measure of \eqref{4.1}. 

Before we prove next theorem, recall that
\begin{equation}
\bar{c}(t,x)=\int_{\mathbb{R}^{ d_{2}}}c(x,y)\nabla_{y}u(t,x,y)\mu^{x}(dy),
\nonumber
\end{equation}
here $u(t,x,y)$ is the solution of following nonlocal Poisson equation
\begin{equation}\label{5.59-1}
\mathcal{L}_{2}(x,y)u(t,x,y)+H(t,x,y)=0.
\end{equation}

\begin{theorem}\label{T53}
	Suppose that  Lipschitz condition, growth condition, dissipative condition valid,  then we have for  $v\in((\alpha_{1}-\alpha_{2})^{+},\alpha_{1}]$, $\gamma\in(0,1)$, $ \lim\limits_{\varepsilon \rightarrow0}\frac{\eta_{\varepsilon}^{\left[ \left( \frac{v}{\alpha_{2}}\right) \wedge \left( 1-\frac{1\vee(\alpha_{1}-v)}{\alpha_{2}}\right) \right] }}{\gamma_{\varepsilon} } =0$,

Regime 1:   $H(t,x,y)\in C_{p}^{\frac{v}{\alpha_{1}},v, 2+\gamma}\cap C_{b}^{2}(\mathbb{R}^{d_{2}})$,    
\begin{equation}\label{5.59}
	\mathbb{E}\left( \sup_{t\in [0,T]}\left| \int_{0}^{t} \frac{1}{\gamma_{\varepsilon}}H(s,X_{s}^{\varepsilon},Y_{s}^{\varepsilon})ds\right|^{p} \right)  \leq    C_{T,p}\left(\left(  \frac{\eta_{\varepsilon}}{\gamma_{\varepsilon}\beta_{\varepsilon}}\right) ^{p}+\left(  \frac{\eta_{\varepsilon}^{1-\frac{1-(1\wedge v)}{\alpha_{2}}}}{\gamma_{\varepsilon}^{2}}\right) ^{p}+\left( \frac{\eta_{\varepsilon}^{\left[ \left( \frac{v}{\alpha_{2}}\right) \wedge \left( 1-\frac{1\vee(\alpha_{1}-v)}{\alpha_{2}}\right) \right] }}{\gamma_{\varepsilon} } \right) ^{p}\right);
\end{equation}

Regime 2:  $H(t,x,y)\in C_{p}^{\frac{v}{\alpha_{1}},v, 2+\gamma}\cap C_{b}^{2}(\mathbb{R}^{d_{2}})$,  
\begin{equation}\label{5.60}
		\begin{split}
	\mathbb{E}&\left( \sup_{t\in [0,T]}\left| \int_{0}^{t}\left(  \frac{1}{\gamma_{\varepsilon}}H(s,X_{s}^{\varepsilon},Y_{s}^{\varepsilon})- \bar{c}(s,X_{s}^{\varepsilon})\right) ds\right|^{p} \right) \\
	& \leq  C_{T,p}\left(                                                                                                                                                                                                                                                                           \left(  \frac{\eta_{\varepsilon}^{1-\frac{1-(1\wedge v)}{\alpha_{2}}}}{\gamma_{\varepsilon}^{2}}\right) ^{p}+ \left(  \frac{\eta_{\varepsilon}^{\left[ \left( \frac{v}{\alpha_{2}}\right) \wedge \left( 1-\frac{1\vee(\alpha_{1}-v)}{\alpha_{2}}\right) \right] }}{\gamma_{\varepsilon} } \right) ^{p}+\gamma_{\varepsilon}^{p}\right) .
		\end{split}
\end{equation}
\end{theorem}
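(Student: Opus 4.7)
The plan is to mimic the Poisson-equation/Itô-formula strategy of Theorem \ref{T52}, but applied to the centered function $H$ itself rather than $b-\bar{b}$. Using the Centering condition \eqref{2.13} and the exponential ergodicity of Lemma \ref{L42}, I would first solve the corrector equation \eqref{5.59-1} by setting
\begin{equation*}
u(t,x,y)=\int_{0}^{\infty}\mathbb{E}H(s,x,Y_{s}^{x,y})\,ds,
\end{equation*}
which inherits the regularity of $H$ (in the two regimes, $C_{b}^{\frac{v}{\alpha_{1}},v,2+\gamma}$ or $C_{b}^{\frac{v}{\alpha_{1}},v,3+\gamma}$) with $\sup_{t,x}|u|+|\nabla_{x}u|\leq C(1+|y|)$ and $\sup_{t,x,y}|\nabla_{y}u|\leq C$, following Theorem \ref{T51-1}. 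Then, letting $u_{n}$ denote the mollification in $(t,x)$ from \eqref{5.56}, I would apply Itô's formula to $u_{n}(t,X_{t}^{\varepsilon},Y_{t}^{\varepsilon})$ exactly as in \eqref{5.46} and rearrange using the identity $\mathcal{L}_{2}u=-H$ to obtain
\begin{equation*}
\tfrac{1}{\gamma_{\varepsilon}}\int_{0}^{t}H\,ds=\tfrac{1}{\gamma_{\varepsilon}}\int_{0}^{t}(H+\mathcal{L}_{2}u_{n})\,ds-\tfrac{\eta_{\varepsilon}}{\gamma_{\varepsilon}}[u_{n}(t,X_{t}^{\varepsilon},Y_{t}^{\varepsilon})-u_{n}(0,x,y)]+\tfrac{\eta_{\varepsilon}}{\gamma_{\varepsilon}}\!\!\int_{0}^{t}\![\partial_{s}u_{n}+\mathcal{L}_{1}u_{n}]\,ds+\tfrac{\eta_{\varepsilon}}{\gamma_{\varepsilon}^{2}}\!\!\int_{0}^{t}\!\mathcal{L}_{3}u_{n}\,ds+\tfrac{\eta_{\varepsilon}}{\gamma_{\varepsilon}\beta_{\varepsilon}}\!\!\int_{0}^{t}\!\mathcal{L}_{4}u_{n}\,ds+\tfrac{\eta_{\varepsilon}}{\gamma_{\varepsilon}}[M_{n,t}^{1,\varepsilon}+M_{n,t}^{2,\varepsilon}].
\end{equation*}

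For Regime 1, each term can be controlled in the $L^{p}(\sup_{t\leq T}|\cdot|)$ norm by combining Lemma \ref{L51} (choosing $n=\eta_{\varepsilon}^{-1/\alpha_{2}}$ to optimally balance the mollification loss against the $n^{-v}$ gain), Lemma \ref{L43} for $\mathbb{E}\sup_{t\leq T}|Y_{t}^{\varepsilon}|^{p}\leq C_{T,p}\eta_{\varepsilon}^{-p/\alpha_{2}}(1+|y|^{p})$, Theorem \ref{T32} for $X^{\varepsilon}$, and the BDG inequality for the martingales $M^{1,\varepsilon}_{n,t}, M^{2,\varepsilon}_{n,t}$ exactly as in \eqref{5.53}--\eqref{5.54}. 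The $\frac{\eta_{\varepsilon}}{\gamma_{\varepsilon}}$-prefactored terms contribute $\frac{\eta_{\varepsilon}^{1-(1-(1\wedge v))/\alpha_{2}}}{\gamma_{\varepsilon}}$ type bounds (which combine with the $\frac{\eta_{\varepsilon}}{\gamma_{\varepsilon}}$ from $M^{1}$), the $\frac{\eta_{\varepsilon}}{\gamma_{\varepsilon}^{2}}\int \mathcal{L}_{3}u_{n}$ term yields the $\frac{\eta_{\varepsilon}^{1-(1-(1\wedge v))/\alpha_{2}}}{\gamma_{\varepsilon}^{2}}$ contribution, the $\mathcal{L}_{4}$ term gives $\frac{\eta_{\varepsilon}}{\gamma_{\varepsilon}\beta_{\varepsilon}}$, and the consolidated error from $\frac{\eta_{\varepsilon}}{\gamma_{\varepsilon}}[u_{n}(t)-u_{n}(0)]$ together with the mollification residual $\frac{1}{\gamma_{\varepsilon}}\int(H+\mathcal{L}_{2}u_{n})\,ds$ yields the $\frac{\eta_{\varepsilon}^{[(v/\alpha_{2})\wedge(1-(1\vee(\alpha_{1}-v))/\alpha_{2})]}}{\gamma_{\varepsilon}}$ factor after combining exponents as in \eqref{5.55}.

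For Regime 2, the scaling $\eta_{\varepsilon}=\gamma_{\varepsilon}\beta_{\varepsilon}$ sends $\frac{\eta_{\varepsilon}}{\gamma_{\varepsilon}\beta_{\varepsilon}}=1$, so the $\mathcal{L}_{4}$ contribution no longer vanishes and I must extract it explicitly as
\begin{equation*}
\int_{0}^{t}c(X_{s}^{\varepsilon},Y_{s}^{\varepsilon})\nabla_{y}u_{n}(s,X_{s}^{\varepsilon},Y_{s}^{\varepsilon})\,ds-\int_{0}^{t}\bar{c}(s,X_{s}^{\varepsilon})\,ds+\int_{0}^{t}\bar{c}(s,X_{s}^{\varepsilon})\,ds.
\end{equation*}
The first difference is centered with respect to $\mu^{x}$, and has the regularity needed to invoke Theorem \ref{T52} provided $\nabla_{y}u\in C_{b}^{\frac{v}{\alpha_{1}},v,2+\gamma}$, which is precisely why $H$ is assumed to sit in $C_{b}^{\frac{v}{\alpha_{1}},v,3+\gamma}$ in this regime. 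Applying \eqref{5.44} to $c\nabla_{y}u-\bar{c}$ then contributes the bounds already present in \eqref{5.60}. The extra $\gamma_{\varepsilon}^{p}$ term reflects that the $\frac{\eta_{\varepsilon}}{\gamma_{\varepsilon}\beta_{\varepsilon}}=1$ factor multiplies a further $\frac{\eta_{\varepsilon}}{\beta_{\varepsilon}}=\gamma_{\varepsilon}$ contribution arising in the LLN estimate.

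The principal obstacle will be the bookkeeping of exponents in the mollification step: one must balance the four a priori competing scales $n^{\alpha_{1}-v}$ (from $\partial_{t}u_{n}$ and $(-\Delta_{x})^{\alpha_{1}/2}u_{n}$), $n^{1-(1\wedge v)}$ (from $\nabla_{x}u_{n}$ in the $\mathcal{L}_{3}$ term), $n^{-v}$ (mollification residual), and $\eta_{\varepsilon}^{-1/\alpha_{2}}$ (from the sup-moment bound of $|Y^{\varepsilon}|$) and verify that the choice $n=\eta_{\varepsilon}^{-1/\alpha_{2}}$ simultaneously optimizes each term into the compact form displayed in the statement. A secondary subtlety in Regime 2 is that applying Theorem \ref{T52} requires not merely the $\mathcal{L}_{4}u_{n}$ term but the unmollified $c\nabla_{y}u$, so one must additionally control the difference $c\nabla_{y}(u-u_{n})$ uniformly in $t$, which is handled by the same Lemma \ref{L51} estimates once $n$ is fixed.
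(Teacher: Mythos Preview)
Your proposal is correct and follows essentially the same approach as the paper: solve the corrector equation $\mathcal{L}_{2}u+H=0$ via the probabilistic representation, mollify in $(t,x)$, apply It\^{o}'s formula to $u_{n}(t,X_{t}^{\varepsilon},Y_{t}^{\varepsilon})$, and estimate term by term with the choice $n=\eta_{\varepsilon}^{-1/\alpha_{2}}$; for Regime~2 you correctly isolate the $\mathcal{L}_{4}u_{n}$ contribution, split $c\nabla_{y}u_{n}-\bar{c}$ into the mollification remainder $c\nabla_{y}(u_{n}-u)$ and the centered term $c\nabla_{y}u-\bar{c}$, and feed the latter back into Theorem~\ref{T52}, which is exactly the paper's $I_{71}+I_{72}$ decomposition. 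Two small corrections: in your formula for $u$ the first argument of $H$ should be the frozen parameter $t$, not the integration variable $s$ (i.e.\ $u(t,x,y)=\int_{0}^{\infty}\mathbb{E}H(t,x,Y_{s}^{x,y})\,ds$); and the bound $|\nabla_{x}u|\leq C(1+|y|)$ need not hold when $v<1$, so rely only on Theorem~\ref{T51-1} for $u,\nabla_{y}u$ and obtain all $x$-derivative control from the mollified $u_{n}$ via Lemma~\ref{L51}, as you in fact do in the body of the argument.
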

\begin{proof}
  Our method mainly refers to \cite[Lemma 4.4]{RX}. 	We first prove Regime 1, in this case, take $n=\eta_{\varepsilon}^{-\frac{1}{\alpha_{2}}}$, we deduce from Theorem \ref{T52} that 
\begin{equation}
	\begin{split}
		\mathbb{E}\left( \sup_{t\in [0,T]}\left| \int_{0}^{t} \frac{1}{\gamma_{\varepsilon}}H(s,X_{s}^{\varepsilon},Y_{s}^{\varepsilon})ds\right|^{p} \right)  &\leq  C_{T,p}\left(\left(  \frac{\eta_{\varepsilon}}{\gamma_{\varepsilon}\beta_{\varepsilon}}\right) ^{p}+\left(  \frac{\eta_{\varepsilon}^{1-\frac{1-(1\wedge v)}{\alpha_{2}}}}{\gamma_{\varepsilon}^{2}}\right) ^{p}+\left( \frac{\eta_{\varepsilon}^{\left[ \left( \frac{v}{\alpha_{2}}\right) \wedge \left( 1-\frac{1\vee(\alpha_{1}-v)}{\alpha_{2}}\right) \right] }}{\gamma_{\varepsilon} } \right) ^{p}\right).
	\nonumber
	\end{split}
\end{equation}

Let $u_{n}$ be the mollifyer of $u$, which is the solution of \eqref{5.59-1}, then by It\^{o} formula, similar to \eqref{5.46},
\begin{equation}\label{5.63}
	\begin{split}
		u_{n}(t,X_{t}^{\varepsilon},Y_{t}^{\varepsilon})
		&=M_{t,n}^{1,\varepsilon}+M_{t,n}^{2,\varepsilon}+u_{n}(x,y)+\int_{0}^{t}\partial_{s}u_{n}(s,X_{s}^{\varepsilon},Y_{s}^{\varepsilon})ds+\int_{0}^{t}\mathcal{L}_{1}(s,x,y)u_{n}(s,X_{s}^{\varepsilon},Y_{s}^{\varepsilon})ds\\
		&+\frac{1}{\eta_{\varepsilon}}\int_{0}^{t}\mathcal{L}_{2}(x,y)u_{n}(s,X_{s}^{\varepsilon},Y_{s}^{\varepsilon})ds+\frac{1}{\gamma_{\varepsilon}}\int_{0}^{t}\mathcal{L}_{3}(s,x,y)u_{n}(s,X_{s}^{\varepsilon},Y_{s}^{\varepsilon})ds\\
		&+\frac{1}{\beta_{\varepsilon}}\int_{0}^{t}\mathcal{L}_{4}(x,y)u_{n}(s,X_{s}^{\varepsilon},Y_{s}^{\varepsilon})ds,
	\end{split}
\end{equation}
hence from \eqref{5.1},
\begin{equation}\label{5.64}
	\begin{split}
		&\int_{0}^{t} g(s,X_{s}^{\varepsilon},Y_{s}^{\varepsilon})-\bar{g}(s,X_{s}^{\varepsilon},Y_{s}^{\varepsilon})ds=\int_{0}^{t} \mathcal{L}_{2}(x,y)u_{n}(s,X_{s}^{\varepsilon},Y_{s}^{\varepsilon})- \mathcal{L}_{2}(x,y)u(s,X_{s}^{\varepsilon},Y_{s}^{\varepsilon})ds\\
		&+\eta_{\varepsilon}\left[ u_{n}(x,y)-u_{n}(t,X_{t}^{\varepsilon},Y_{t}^{\varepsilon})+\int_{0}^{t}\mathcal{L}_{1}(s,x,y)u_{n}(s,X_{s}^{\varepsilon},Y_{s}^{\varepsilon})ds+\frac{1}{\gamma_{\varepsilon}}\int_{0}^{t}\mathcal{L}_{3}(s,x,y)u_{n}(s,X_{s}^{\varepsilon},Y_{s}^{\varepsilon})ds\right.	\\
		&\left. +\frac{1}{\beta_{\varepsilon}}\int_{0}^{t}\mathcal{L}_{4}(x,y)u_{n}(s,X_{s}^{\varepsilon},Y_{s}^{\varepsilon})ds +M_{t,n}^{1,\varepsilon}+M_{t,n}^{2,\varepsilon}+\int_{0}^{t} \partial_{s}u_{n}(s,X_{s}^{\varepsilon},Y_{s}^{\varepsilon}) ds\right],
	\end{split}
\end{equation}
then  from the structure of $H(t,x,y)$, for Regime 2,  we can see that
\begin{align}
		&\mathbb{E}\left( \sup_{t\in [0,T]}\left| \int_{0}^{t} \left( \frac{1}{\gamma_{\varepsilon}}H(s,X_{s}^{\varepsilon},Y_{s}^{\varepsilon})- \bar{c}(s,X_{s}^{\varepsilon})\right) ds\right|^{p} \right)\nonumber \\
		&\leq C_{T,p}\cdot\frac{1}{\gamma_{\varepsilon}^{p}}\int_{0}^{T}| \mathcal{L}_{2}(x,y)u_{n}(s,X_{s}^{\varepsilon},Y_{s}^{\varepsilon})-\mathcal{L}_{2}(x,y)u(s,X_{s}^{\varepsilon},Y_{s}^{\varepsilon})|^{p} ds\nonumber \\
		&+  \frac{\eta_{\varepsilon}^{p}}{\gamma_{\varepsilon}^{p}}\left[ \mathbb{E}\left( \sup_{t\in [0,T]}|u_{n}(x,y)-u_{n}(t,X_{t}^{\varepsilon},Y_{t}^{\varepsilon})|^{p}\right)\right] +\mathbb{E}\left( \int_{0}^{T}|\mathcal{L}_{1}(s,x,y)u_{n}(s,X_{s}^{\varepsilon},Y_{s}^{\varepsilon})|^{p}ds\right)\nonumber	\\
		&+\frac{1}{\gamma_{\varepsilon}^{p}}\mathbb{E}\left( \int_{0}^{T}|\mathcal{L}_{3}(s,x,y)u_{n}(s,X_{s}^{\varepsilon},Y_{s}^{\varepsilon})|^{p}ds\right)+\mathbb{E}\left( \sup_{t\in [0,T]}|M_{t,n}^{1,\varepsilon}|^{p}\right)+\mathbb{E}\left( \sup_{t\in [0,T]}|M_{t,n}^{2,\varepsilon}|^{p}\right)\nonumber\\
		&
		 + \mathbb{E}\left(  \int_{0}^{T}\left|\partial_{s}u_{n}(s,X_{s}^{\varepsilon},Y_{s}^{\varepsilon})\right|^{p}ds
		 \right) +\mathbb{E}\left( \int_{0}^{T}|\mathcal{L}_{4}(x,y)u_{n}(s,X_{s}^{\varepsilon},Y_{s}^{\varepsilon})- \bar{c}(s,X_{s}^{\varepsilon})|^{p}ds\right)\nonumber\\
		&=I_{0}+ I_{1}+I_{2}+I_{3}+I_{4}+I_{5}+I_{6}+I_{7},\label{5.65}
\end{align}
by Lemma \ref{L51}, we have 
\begin{equation}\label{5.66}
	\begin{aligned}
		I_{0}+I_{1}+I_{2}+I_{3}+I_{4}+I_{5}+I_{6}&\leq C_{T,p}\left( \frac{n^{-vp}}{\gamma_{\varepsilon}^{p}}+\frac{\eta_{\varepsilon}^{p}}{\gamma_{\varepsilon}^{p}}n^{p(\alpha_{1}-v)}+\frac{\eta_{\varepsilon}^{p}}{\gamma_{\varepsilon}^{2p}}n^{p(1-(1\wedge v))}+\frac{\eta_{\varepsilon}^{p}}{\gamma_{\varepsilon}^{p}}+\frac{\eta_{\varepsilon}^{(1-\frac{1}{\alpha_{2}})p}}{\gamma_{\varepsilon}^{p}} \right), \\
	\end{aligned}
\end{equation}
in particular,
\begin{equation}\label{5.67}
	\begin{split}
		I_{7}&=\mathbb{E}\left( \int^{T}_{0}|c(X_{s}^{\varepsilon},Y_{s}^{\varepsilon})\nabla_{y} u_{n}(s,X_{s}^{\varepsilon},Y_{s}^{\varepsilon})-\bar{c}(s,X_{s}^{\varepsilon})|^{p}ds\right) \\
		&\leq\mathbb{E}\left(  \int^{T}_{0}\left|c(X_{s}^{\varepsilon},Y_{s}^{\varepsilon})\nabla_{y} u_{n}(s,X_{s}^{\varepsilon},Y_{s}^{\varepsilon})-c(X_{s}^{\varepsilon},Y_{s}^{\varepsilon})\nabla_{y} u(s,X_{s}^{\varepsilon},Y_{s}^{\varepsilon})\right| ^{p}ds\right)\\ &+\mathbb{E}\left(  \int^{T}_{0}\left|c(X_{s}^{\varepsilon},Y_{s}^{\varepsilon})\nabla_{y} u(s,X_{s}^{\varepsilon},Y_{s}^{\varepsilon})-\bar{c}(s,X_{s}^{\varepsilon})\right|^{p}ds\right)=I_{71}+I_{72},
	\end{split}
\end{equation}
for $H(t,x,y)\in C_{p}^{\frac{v}{\alpha_{1}},v, 2+\gamma}\cap C_{b}^{2}(\mathbb{R}^{d_{2}})$, then $u \in C_{p}^{\frac{v}{\alpha_{1}},v, 2+\gamma}\cap C_{b}^{2}(\mathbb{R}^{d_{2}})$, using Lemma \ref{L51} and growth condition,
\begin{equation}\label{5.68}
	\begin{split}
		I_{71}&\leq  C_{T,p}\mathbb{E}\left( \int^{T}_{0}\| \nabla_{y} u_{n}(s,X_{s}^{\varepsilon},Y_{s}^{\varepsilon})-\nabla_{y} u(s,X_{s}^{\varepsilon},Y_{s}^{\varepsilon})\|^{p}_{\infty}(1+|x|^{p}+|y|^{p})ds\right) \leq C_{T,p} n^{-pv},
	\end{split}
\end{equation}
also $c(X_{s}^{\varepsilon},Y_{s}^{\varepsilon})\nabla_{y} u(s,X_{s}^{\varepsilon},Y_{s}^{\varepsilon})\in C_{p}^{\frac{v}{\alpha_{1}},v, 1+\gamma}\cap C_{b}^{1}(\mathbb{R}^{d_{2}})$, and  $I_{72}$ satisfies Centering condition, by \eqref{5.4-1} in Theorem \ref{T51-1}, Theorem \ref{T52},
\begin{equation}\label{5.69}
	\begin{split}
		I_{72}\leq  C_{T,p}\left(\eta_{\varepsilon}^{\frac{pv}{\alpha_{2}}}+\eta_{\varepsilon}^{p(1-\frac{1\vee(\alpha_{1}-v)}{\alpha_{2}})}+\left( \frac{\eta_{\varepsilon}}{\beta_{\varepsilon}}\right)^{p} + \left( \frac{\eta_{\varepsilon}^{1-\frac{1-(1\wedge v)}{\alpha_{2}}}}{\gamma_{\varepsilon}}\right)^{p}\right),
	\end{split}
\end{equation}
finally we get 
\begin{equation}\label{5.70}
	\begin{aligned}
		&\mathbb{E}\left( \sup_{t\in [0,T]}\left| \int_{0}^{t}\left(  \frac{1}{\gamma_{\varepsilon}}H(s,X_{s}^{\varepsilon},Y_{s}^{\varepsilon})- \bar{c}(s,X_{s}^{\varepsilon})\right) ds\right|^{p} \right)\\
		&\leq C_{T,p}\left( \frac{n^{-vp}}{\gamma_{\varepsilon}^{p}}+\frac{\eta_{\varepsilon}^{p}}{\gamma_{\varepsilon}^{p}}n^{p(2-v)}+\frac{\eta_{\varepsilon}^{p}}{\gamma_{\varepsilon}^{2p}}n^{p(1-(1\wedge v))}+\frac{\eta_{\varepsilon}^{p}}{\gamma_{\varepsilon}^{p}}n^{p(1-(1\wedge v))}+\frac{\eta_{\varepsilon}^{p}}{\beta_{\varepsilon}^{p}}+n^{-pv}+\eta_{\varepsilon}^{(1-\frac{1}{\alpha_{2}})p} \right),\\
	\end{aligned}
\end{equation}
for $\eta_{\varepsilon}=\gamma_{\varepsilon}\beta_{\varepsilon}$,  take $n=\eta_{\varepsilon}^{-\frac{1}{\alpha_{2}}}$,  then 
\begin{equation}\label{5.71}
	\begin{aligned}
		\mathbb{E}&\left( \sup_{t\in [0,T]}\left| \int_{0}^{t}\left(  \frac{1}{\gamma_{\varepsilon}}H(s,X_{s}^{\varepsilon},Y_{s}^{\varepsilon})- \bar{c}(s,X_{s}^{\varepsilon})\right) ds\right|^{p} \right)\\
		&\leq C_{T,p}\left(                                                                                                                                                                                                                                                                           \left(  \frac{\eta_{\varepsilon}^{1-\frac{1-(1\wedge v)}{\alpha_{2}}}}{\gamma_{\varepsilon}^{2}}\right) ^{p}+ \left( \frac{\eta_{\varepsilon}^{\left[ \left( \frac{v}{\alpha_{2}}\right) \wedge \left( 1-\frac{1\vee(\alpha_{1}-v)}{\alpha_{2}}\right) \right] }}{\gamma_{\varepsilon} } \right) ^{p}+\gamma_{\varepsilon}^{p}\right) ,\\
	\end{aligned}
\end{equation}
the proof this theorem is complete.
\end{proof}

\begin{remark}\label{R51}
	Next, we clarify why the averaged equations cannot be derived as neither
	\begin{equation}\label{5.R3}
		d\bar{X}^{3}_{t}=(\bar{b}(t,\bar{X}^{3}_{t})+\bar{H}(t,\bar{X}^{3}_{t}))dt+dL_{t}^{1},
	\end{equation}
	or
		\begin{equation}\label{5.R4}
		d\bar{X}^{4}_{t}=(\bar{b}(t,\bar{X}^{4}_{t})+\bar{c}(t,\bar{X}^{4}_{t})+\bar{H}(t,\bar{X}^{4}_{t}))dt+dL_{t}^{1},
	\end{equation}
	where \begin{equation}
		\bar{H}(t,x)=\int_{\mathbb{R}^{ d_{2}}}H(t,x,y)\nabla_{x}u(t,x,y)\mu^{x}(dy),
		\nonumber
	\end{equation}
	here $u(t,x,y)$ is the solution of \eqref{5.59-1}. 
	
	In Regimes $3$ we aim to derive \eqref{5.R3}, here we let $ \lim\limits_{\varepsilon \rightarrow0}\frac{\gamma_{\varepsilon}}{\beta_{\varepsilon} }=0$,  $\eta_{\varepsilon}=\gamma_{\varepsilon}^{2}$, then the  boundedness of $	\underset{t\in [0,T]}{\sup}\underset{ x\in \mathbb{R}^{d_{1}}}{\sup}|\nabla_{x}u(t,x,y)|$ becomes crucial for controlling the term $H\cdot\nabla_{x} u$, as indicated in \eqref{5.671}. This necessitates our employment of Theorem $\ref{T51}$ rather than Theorem $\ref{T51-1}$, consequently requiring the regularity assumption  $H(t,x,y)\in C_{p}^{\frac{v}{\alpha_{1}},2+\gamma,2+\gamma}\cap C_{b}^{2+\gamma,2+\gamma}(\mathbb{R}^{d_{1}+d_{2}})$. Particularly, we emphasize that $1+\gamma>\alpha_{1}\geq v$ and $\alpha_{1}>1$,  so that $\|\nabla_{x}u_ {n}(\cdot,\cdot,y)\|_{\infty}\leq C\cdot (1+|y|)$, see computations of \eqref{5.58-32} in Lemma $\ref{L51}$. We mention that the regime classifications  $ \lim\limits_{\varepsilon \rightarrow0}\frac{\gamma_{\varepsilon}}{\beta_{\varepsilon} }=0$ and $\eta_{\varepsilon}=\gamma_{\varepsilon}^{2}$ enable us to prove that  $\mathcal{L}_{3}(t,x,y)u_{n}-\bar{H}$ converges to $0$,   however, these assumptions may introduce contradictions in the following analysis.
\begin{align}
&\mathbb{E}\left( \sup_{t\in [0,T]}\left| \int_{0}^{t} \frac{1}{\gamma_{\varepsilon}}H(s,X_{s}^{\varepsilon},Y_{s}^{\varepsilon})- \bar{H}(s,X_{s}^{\varepsilon})ds\right|^{p} \right)\nonumber \\
&\leq C_{T,p}\cdot\frac{1}{\gamma_{\varepsilon}^{p}} \mathbb{E}\left( \int_{0}^{T}| \mathcal{L}_{2}(x,y)u_{n}(s,X_{s}^{\varepsilon},Y_{s}^{\varepsilon})-\mathcal{L}_{2}(x,y)u(s,X_{s}^{\varepsilon},Y_{s}^{\varepsilon})|^{p} ds\right) \nonumber \\
&+  \frac{\eta_{\varepsilon}^{p}}{\gamma_{\varepsilon}^{p}}\left[ \mathbb{E}\left( \sup_{t\in [0,T]}|u_{n}(x,y)-u_{n}(t,X_{t}^{\varepsilon},Y_{t}^{\varepsilon})|^{p}\right)\right] +\mathbb{E}\left( \int_{0}^{T}|\mathcal{L}_{1}(s,x,y)u_{n}(s,X_{s}^{\varepsilon},Y_{s}^{\varepsilon})|^{p}ds\right)\nonumber	\\
&+\frac{1}{\gamma_{\varepsilon}^{p}}\mathbb{E}\left( \int_{0}^{T}|\mathcal{L}_{4}(x,y)u_{n}(s,X_{s}^{\varepsilon},Y_{s}^{\varepsilon})|^{p}ds\right)+\mathbb{E}\left( \sup_{t\in [0,T]}|M_{t,n}^{1,\varepsilon}|^{p}\right)+\mathbb{E}\left( \sup_{t\in [0,T]}|M_{t,n}^{2,\varepsilon}|^{p}\right)\nonumber	\\
&
 +  \mathbb{E}\left(  \int_{0}^{T}\left|\partial_{s}u_{n}(s,X_{s}^{\varepsilon},Y_{s}^{\varepsilon})\right|^{p} ds
\right) +\mathbb{E}\left( \int_{0}^{T}|\mathcal{L}_{3}(s,x,y)u_{n}(s,X_{s}^{\varepsilon},Y_{s}^{\varepsilon})- \bar{H}(s,X_{s}^{\varepsilon})|^{p}ds\right)\nonumber\\
&=I_{0}+ I_{1}+I_{2}+I_{3}+I_{4}+I_{5}+I_{6}+I_{7},\label{5.72}
\end{align}
from Lemma $\ref{L51}$,
\begin{equation}\label{5.661}
	\begin{aligned}
			I_{0}+I_{1}+I_{2}+I_{3}+I_{4}+I_{5}+I_{6}&\leq C_{T,p}\left( \frac{n^{-vp}}{\gamma_{\varepsilon}^{p}}+\frac{\eta_{\varepsilon}^{p}}{\gamma_{\varepsilon}^{p}}n^{p(\alpha_{1}-v)}+\frac{\eta_{\varepsilon}^{p}}{\gamma_{\varepsilon}^{p}\beta_{\varepsilon}^{p}}+\frac{\eta_{\varepsilon}^{p}}{\gamma_{\varepsilon}^{2p}}+\frac{\eta_{\varepsilon}^{(1-\frac{1}{\alpha_{2}})p}}{\gamma_{\varepsilon}^{p}} \right),\\
	\end{aligned}
\end{equation}
thus, 
\begin{align}
		I_{7}&=\mathbb{E}\left( \int^{T}_{0}|H(s,X_{s}^{\varepsilon},Y_{s}^{\varepsilon})\nabla_{x} u_{n}(s,X_{s}^{\varepsilon},Y_{s}^{\varepsilon})-\bar{H}(s,X_{s}^{\varepsilon})|^{p}ds\right) \nonumber\\
		&\leq\mathbb{E}\left( \int^{T}_{0}|H(s,X_{s}^{\varepsilon},Y_{s}^{\varepsilon})\nabla_{x} u_{n}(s,X_{s}^{\varepsilon},Y_{s}^{\varepsilon})-H(s,X_{s}^{\varepsilon},Y_{s}^{\varepsilon})\nabla_{x} u(s,X_{s}^{\varepsilon},Y_{s}^{\varepsilon})|^{p}ds\right)\nonumber\\ 
		&+\mathbb{E}\left( \int^{T}_{0}|H(s,X_{s}^{\varepsilon},Y_{s}^{\varepsilon})\nabla_{x} u(s,X_{s}^{\varepsilon},Y_{s}^{\varepsilon})-\bar{H}(s,X_{s}^{\varepsilon})|^{p}ds\right)=I_{71}+I_{72},\label{5.671}
\end{align}
similar to Regime $2$, together with Theorem $\ref{T51}$, Lemma $\ref{L51}$ and Theorem $\ref{T52}$,
\begin{equation}\label{5.75}
	\begin{split}
		I_{7}\leq  C_{T,p}\left(\eta_{\varepsilon}^{\frac{pv}{\alpha_{2}}}+\eta_{\varepsilon}^{p(1-\frac{1\vee(\alpha_{1}-v)}{\alpha_{2}})}+\left( \frac{\eta_{\varepsilon}}{\beta_{\varepsilon}}\right)^{p} + \left( \frac{\eta_{\varepsilon}^{1-\frac{1-(1\wedge v)}{\alpha_{2}}}}{\gamma_{\varepsilon}}\right)^{p}\right),
	\end{split}
\end{equation}
however we notice that  $\eta_{\varepsilon}=\gamma_{\varepsilon}^{2}$, then in \eqref{5.661} $\frac{\eta_{\varepsilon}^{1-\frac{1}{\alpha_{2}}}}{\gamma_{\varepsilon}}=\gamma_{\varepsilon}^{1-\frac{2}{\alpha_{2}}}$, but when $1<\alpha_{2}<2$, $\gamma_{\varepsilon}^{1-\frac{2}{\alpha_{2}}}$ definitely diverges as  $\gamma_{\varepsilon}\rightarrow0$, which   prevents the convergence  to 0 of $ \dfrac{\eta_{\varepsilon}^{p}}{\gamma_{\varepsilon}^{p}}\mathbb{E}\left( \underset{t\in [0,T]}{\sup}|u_{n}(t,X_{t}^{\varepsilon},Y_{t}^{\varepsilon})|^{p}\right)$ and the scaled martingale term  $\dfrac{\eta_{\varepsilon}^{p}}{\gamma_{\varepsilon}^{p}}\mathbb{E}\left(\underset{t\in [0,T]}{\sup} |M_{t,n}^{2,\varepsilon}|^{p}\right)$ associated with  $ Y^{\varepsilon}_{t}$, see \eqref{5.49} and \eqref{5.54} respectively.

As for Regime $4$ we target to \eqref{5.R4}, in this case,  to maintain consistency with the terms $ \mathcal{L}_{4}(x,y)u_{n}-\bar{c}$ and $\mathcal{L}_{3}(t,x,y)u_{n}-\bar{H}$ respectively,  we must impose the conditions  $\eta_{\varepsilon}=\gamma_{\varepsilon}^{2}=\gamma_{\varepsilon}\beta_{\varepsilon}$, then
\begin{align}
	&\mathbb{E}\left( \sup_{t\in [0,T]}\left| \int_{0}^{t} \frac{1}{\gamma_{\varepsilon}}H(s,X_{s}^{\varepsilon},Y_{s}^{\varepsilon})- \bar{H}(s,X_{s}^{\varepsilon})- \bar{c}(s,X_{s}^{\varepsilon})ds\right|^{p} \right)\nonumber \\
	&\leq C_{T,p}\cdot\frac{1}{\gamma_{\varepsilon}^{p}} \mathbb{E}\left( \int_{0}^{T}| \mathcal{L}_{2}(x,y)u_{n}(s,X_{s}^{\varepsilon},Y_{s}^{\varepsilon})-\mathcal{L}_{2}(x,y)u(s,X_{s}^{\varepsilon},Y_{s}^{\varepsilon})|^{p} ds\right) \nonumber \\
	&+  \frac{\eta_{\varepsilon}^{p}}{\gamma_{\varepsilon}^{p}}\left[ \mathbb{E}\left( \sup_{t\in [0,T]}|u_{n}(x,y)-u_{n}(t,X_{t}^{\varepsilon},Y_{t}^{\varepsilon})|^{p}\right)\right] +\mathbb{E}\left( \int_{0}^{T}|\mathcal{L}_{1}(s,x,y)u_{n}(s,X_{s}^{\varepsilon},Y_{s}^{\varepsilon})|^{p}ds\right)\nonumber	\\
	&
	+\mathbb{E}\left( \sup_{t\in [0,T]}|M_{t,n}^{1,\varepsilon}|^{p}\right)+\mathbb{E}\left( \sup_{t\in [0,T]}|M_{t,n}^{2,\varepsilon}|^{p}\right) + \mathbb{E}\left( \int_{0}^{T}|\partial_{s}u_{n}(s,X_{s}^{\varepsilon},Y_{s}^{\varepsilon})| ^{p}ds
	\right) \nonumber\\
	&		+\mathbb{E}\left( \int_{0}^{T}|\mathcal{L}_{3}(s,x,y)u_{n}(s,X_{s}^{\varepsilon},Y_{s}^{\varepsilon})- \bar{H}(s,X_{s}^{\varepsilon})|^{p}ds	+\mathbb{E}\left( \int_{0}^{T}|\mathcal{L}_{4}(x,y)u_{n}(s,X_{s}^{\varepsilon},Y_{s}^{\varepsilon})- \bar{c}(s,X_{s}^{\varepsilon})|^{p}ds\right)\right)\nonumber\\
	&=I_{0}+ I_{1}+I_{2}+I_{3}+I_{4}+I_{5}+I_{6}+I_{7},\label{5.77}
\end{align}
so that 
\begin{equation}\label{5.78}
	\begin{aligned}
		I_{0}+I_{1}+I_{2}+I_{3}+I_{4}+I_{5}&\leq C_{T,p}\left( \frac{n^{-vp}}{\gamma_{\varepsilon}^{p}}+\frac{\eta_{\varepsilon}^{p}}{\gamma_{\varepsilon}^{p}}n^{p(\alpha_{1}-v)}+ \left( \frac{\eta_{\varepsilon}^{1-\frac{1}{\alpha_{2}}}}{\gamma_{\varepsilon} } \right) ^{p}\right),\\
	\end{aligned}
\end{equation}
and from \eqref{5.65} and \eqref{5.72},
\begin{equation}\label{5.79}
	\begin{split}
		I_{6}+I_{7}\leq  C_{T,p}\left(\eta_{\varepsilon}^{\frac{pv}{\alpha_{2}}}+\eta_{\varepsilon}^{p(1-\frac{1\vee(\alpha_{1}-v)}{\alpha_{2}})}+\left( \frac{\eta_{\varepsilon}}{\beta_{\varepsilon}}\right)^{p} + \left( \frac{\eta_{\varepsilon}^{1-\frac{1-(1\wedge v)}{\alpha_{2}}}}{\gamma_{\varepsilon}}\right)^{p}\right),
	\end{split}
\end{equation}
then $\frac{\eta_{\varepsilon}^{1-\frac{1}{\alpha_{2}}}}{\gamma_{\varepsilon}}=\gamma_{\varepsilon}^{1-\frac{2}{\alpha_{2}}}$ in \eqref{5.78} leads to contradictions again.
\end{remark}

\section{Weak convergence estimates for \ref{1.1} }

\subsection{Nonlocal Poisson equation for \eqref{1.1} in weak convergence}

Firstly we consider the following Kolmogorov equation
\begin{equation}\label{6.10}
	\left\{
	\begin{aligned}
		&\partial_{t}u(t,x)=	-(-\Delta_{x})^{\frac{\alpha_{1}}{2}}u(t,x)+( \bar{b}(t,x),\nabla_{x}u(t,x)) ,\ t\in[0,T],\\
		&u(0,x)=\phi(x),
	\end{aligned}
	\right.
\end{equation}
here we assume that  $\phi(x)\in C^{2+\gamma}_{b}(\mathbb{R}^{d_{1}})$,  $ \bar{b}(t,x)=\int_{\mathbb{R}^{d_{2}}}b(t,x,y)\mu^{x}(dy),$ $\mathcal{\bar{L}}$ can be regarded as the infinitesimal generator of transition semigroup associated with the averaged process $\bar{X_{t}}$, which takes the form as $d\bar{X_{t}}=\bar{b}(t,\bar{X}_{t})dt+dL_{t}^{1}$. By classical parabolic PDE theory, there exists a unique solution 
\begin{equation}\label{6.11}
 u(t,x)=\mathbb{E}\phi(\bar{X}_{t}(x)),\ t\in[0,T],
\end{equation}
so that $u(t,\cdot)\in C_{b}^{2+\gamma}(\mathbb{R}^{d_{1}})$, $\nabla_{x}u(t,\cdot)\in C_{b}^{1+\gamma}(\mathbb{R}^{d_{1}})$, $\nabla_{x}u(\cdot,x)\in C^{1}([0,T])$, and $\exists C_{T}>0$ s.t.,
\begin{equation}\label{6.12}
 \sup_{t\in [0,T]}\Arrowvert u(t,\cdot)\Arrowvert_{C_{b}^{2+\gamma}(\mathbb{R}^{d_{1}})}\leq C_{T},\  \sup_{t\in [0,T]}\Arrowvert \nabla_{x}u(t,\cdot)\Arrowvert_{C_{b}^{1+\gamma}(\mathbb{R}^{d_{1}})}\leq C_{T},\  \sup_{t\in [0,T]}\Arrowvert \partial_{t}(\nabla_{x}u(\cdot,x))\Arrowvert\leq C_{T}.
\end{equation}

For any fixed $t>0$, let $\hat{u}_{t}(s,x)=u(t-s,x),\ s\in [0,t]$, by It\^{o} formula, 
\begin{equation}\label{6.13}
		\begin{split}
&\hat{u}_{t}(t,X_{t}^{\varepsilon})=\hat{u}_{t}(0,x)+\int_{0}^{t}\partial_{s}\hat{u}_{t}(s,X_{s}^{\varepsilon})ds+\int_{0}^{t}\mathcal{L}_{1}\hat{u}_{t}(s,X_{s}^{\varepsilon})ds+\frac{1}{\gamma_{\varepsilon}}\int_{0}^{t}\mathcal{L}_{3}\hat{u}_{t}(s,X_{s}^{\varepsilon})ds+\hat{M}^{1}_{t},
	\end{split}
\end{equation}
where $$ \hat{M}^{1}_{t}=\int_{0}^{t}\int_{\mathbb{R}^{d_{1}}}\left(  \hat{u}_{t}(s,X_{s^{-}}^{\varepsilon}+x)-\hat{u}_{t}(s,X_{s^{-}}^{\varepsilon})\right) \tilde{N}^{1}(ds,dx), $$
observe that $ \mathbb{E}\hat{M}^{1}_{t}=0,$ $\hat{u}_{t}(t,X_{t}^{\varepsilon})=u(0,X_{t}^{\varepsilon})=\phi(X_{t}^{\varepsilon}),$ $\hat{u}_{t}(0,x)=u(t,x)=\mathbb{E}\phi(\bar{X}_{t}(x)),$ and
\begin{equation}\nonumber
	\begin{split}
		\partial_{s}\hat{u}_{t}(s,X_{s}^{\varepsilon})&=\partial_{s}u(t-s,X_{s}^{\varepsilon})=-\mathcal{\bar{L}}u_{t}(s,X_{s}^{\varepsilon})=(-\Delta_{x})^{\frac{\alpha_{1}}{2}}\hat{u}_{t}(s,X_{s}^{\varepsilon})-( \bar{b}(s,X_{s}^{\varepsilon}),\nabla_{x}\hat{u}_{t}(s,X_{s}^{\varepsilon})),
	\end{split}
	\end{equation}
then we get from \eqref{6.13},
\begin{equation}\label{6.14}
	\begin{split}
	\mathbb{E}\phi(X_{t}^{\varepsilon})-\mathbb{E}\phi(\bar{X}_{t})
	&= \mathbb{E}\int_{0}^{t}-\mathcal{\bar{L}}\hat{u}_{t}(s,X_{s}^{\varepsilon})+\mathcal{L}_{1}\hat{u}_{t}(s,X_{s}^{\varepsilon})ds+\mathbb{E}\int_{0}^{t}\frac{1}{\gamma_{\varepsilon}}\mathcal{L}_{3}\hat{u}_{t}(s,X_{s}^{\varepsilon})ds\\
	&= \mathbb{E}\int_{0}^{t}(b(s,X_{s}^{\varepsilon},Y_{s}^{\varepsilon})- \bar{b}(s,X_{s}^{\varepsilon}),\nabla_{x}\hat{u}_{t}(s,X_{s}^{\varepsilon} ))ds+\mathbb{E}\int_{0}^{t}\frac{1}{\gamma_{\varepsilon}}\mathcal{L}_{3}\hat{u}_{t}(s,X_{s}^{\varepsilon})ds,
	\end{split}
\end{equation}
$\forall s\in [0,T]$, $x\in \mathbb{R}^{d_{1}} $, define 
\begin{equation}\label{6.15}
\check{b}_{t}(s,x,y)=( b(s,x,y),\nabla_{x}\hat{u}_{t}(s,x)),
\end{equation}
so that $\bar{\check{b}}_{t}(s,x)=\int_{\mathbb{R}^{d_{2}}}\check{b}_{t}(s,x,y)\mu^{x}(dy)=(\bar{b}_{t}(s,x),\nabla_{x}\hat{u}_{t}(s,x))$, let $b(t,x,y)\in C_{b}^{\frac{v}{\alpha_{1}},1+\gamma,2+\gamma}$, then $\bar{b}(t,x)\in C_{b}^{\frac{v}{\alpha_{1}},1+\gamma} $, with the boundedness of $b(s,x,y)$, and $\hat{u}_{t}(s,x)\in  C^{1,2+\gamma}_{b}$, we have
$ \check{b}_{t}(s,x,y),\bar{\check{b}}_{t}(s,x)\in C_{b}^{\frac{v}{\alpha_{1}},1+\gamma,2+\gamma},$ and we can see that $$\int_{\mathbb{R}^{ d_{2}}}(\check{b}_{t}(s,x,y)- \bar{\check{b}}_{t}(s,x))\mu^{x}(dy)=\int_{\mathbb{R}^{ d_{2}}}(b(t,x,Y_{s}^{x,y})-\bar{b}(t,x),\nabla_{x}\hat{u}_{t}(s,x))\mu^{x}(dy)=0,$$ which means that $\check{b}_{t}(s,x,y)- \bar{\check{b}}_{t}(s,x)$ satisfies the Centering condition.

We next construct the nonlocal Poisson equation as   ``corrector equation" by \eqref{6.14},
\begin{equation}\label{6.19}
	\begin{split}
			\mathcal{L}_{2}\Phi(t,x,y)+ \check{b}_{t}(s,x,y)-\bar{\check{b}}_{t}(s,x)=\mathcal{L}_{2}\Phi(t,x,y)+ (b(t,x,y)-\bar{b}(t,x),\nabla_{x}\hat{u}_{t}(s,x))=0,
	\end{split}
\end{equation}
here 
\begin{equation}\label{6.18}
	\mathcal{L}_{2}\Phi(t,x,y)=-(-\Delta_{y})^{\frac{\alpha_{2}}{2}}\Phi(t,x,y)+f(x,y)\nabla_{y}\Phi(t,x,y),
\end{equation}
and \eqref{6.19} is to eliminate the difference between drifts.  We give some regularity estimates of $\Phi(t,x,y)$.

\begin{theorem}\label{T61}
	For any  initial point $x\in\mathbb{R}^{d_{1}},\ y\in\mathbb{R}^{d_{2}}$, $ b(t,x,y)\in C_{b}^{\frac{v}{\alpha_{1}},1+\gamma,2+\gamma}$, we define 
	\begin{equation}\label{6.21}
		\begin{split}
			\Phi(t,x,y)=\int^{\infty}_{0}\mathbb{E}\left[ \check{b}_{t}(s,x,Y_{s}^{x,y})-\bar{\check{b}}_{t}(s,x)\right]ds,
		\end{split}
	\end{equation}
	then \eqref{6.21} is a solution of \eqref{6.19},  $\forall T>0$, $t\in [0,T]$,  $	\Phi(t,\cdot,y)\in C^{1+\gamma}_{b}(\mathbb{R}^{d_{1}})$, $	\Phi(t,x,\cdot)\in C_{b}^{2+\gamma}(\mathbb{R}^{ d_{2}})$,  $\exists C_{T}>0$ s.t.,
	\begin{equation}\label{6.23}
		\sup_{t\in [0,T]} \sup_{x\in \mathbb{R}^{d_{1}}}	 |\Phi(t,x,y)| \leq C_{T}(1+|y|),
	\end{equation}
	\begin{equation}\label{6.24}
		\sup_{t\in [0,T]} 	\sup_{x\in \mathbb{R}^{d_{1}}}[|\nabla_{x}\Phi(t,x,y)|+|\nabla_{y}\Phi(t,x,y)|]\leq C_{T}(1+|y|),
	\end{equation}
	\begin{equation}\label{6.25}
		\sup_{t\in [0,T]} |\nabla_{x}\Phi(t,x_{1},y)-\nabla_{x}\Phi(t,x_{2},y)|\leq C_{T}|x_{1}-x_{2}|^{\gamma}(1+|x_{1}-x_{2}|^{1-\gamma})(1+|y|),
	\end{equation}
		here $\gamma\in (\alpha_{1}-1,1)$.
\end{theorem}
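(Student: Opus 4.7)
My overall plan is to follow the template used in the proofs of Theorem \ref{T51} and Theorem \ref{T51-1}, which themselves rely on Proposition 3.3 of \cite{SXX}, now applied to the inhomogeneous term $\check{b}_t(s,x,y)-\bar{\check{b}}_t(s,x) = (b(t,x,y)-\bar{b}(t,x),\nabla_x \hat{u}_t(s,x))$. The centering property has already been verified in the paragraph preceding \eqref{6.19}, so \eqref{6.21} is the natural candidate. To confirm that $\Phi$ solves \eqref{6.19}, I would apply It\^{o}'s formula to the process $s\mapsto \Phi(t,x,Y_s^{x,y})$ (with the frozen equation \eqref{4.1} for fixed $x$) and combine it with Lemma \ref{L42} to see that $\mathbb{E}\Phi(t,x,Y_s^{x,y})\to 0$ as $s\to\infty$. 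Passing to the limit then yields $\mathcal{L}_2\Phi + \check{b}_t-\bar{\check{b}}_t=0$.

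For the bound \eqref{6.23}, observe that $\check{b}_t(s,x,\cdot)$ is Lipschitz in $y$ uniformly in $(s,t,x)$: the factor $b(t,x,\cdot)$ is Lipschitz in $y$ (from $b\in C_b^{\frac{v}{\alpha_1},1+\gamma,2+\gamma}$) and $\|\nabla_x \hat{u}_t\|_{\infty}\leq C_T$ uniformly by \eqref{6.12}. Lemma \ref{L42} then provides $|\mathbb{E}\check{b}_t(s,x,Y_s^{x,y})-\bar{\check{b}}_t(s,x)|\leq C_T e^{-\beta s/2}(1+|y|)$, and integrating in $s$ gives \eqref{6.23}. For the $y$-gradient in \eqref{6.24}, I would differentiate \eqref{6.21} under the integral to get $\nabla_y\Phi(t,x,y)=\int_0^\infty\mathbb{E}[\nabla_y\check{b}_t(s,x,Y_s^{x,y})\nabla_y Y_s^{x,y}]ds$; the bound $\|\nabla_y\check{b}_t\|_\infty\leq C_T$ together with the contractive estimate $\|\nabla_y Y_s^{x,y}\|\leq C e^{-\beta s/2}$ from Lemma \ref{L41} yields the required control, and the additional $C^{2+\gamma}$ regularity of $b$ in $y$ gives $\Phi(t,x,\cdot)\in C^{2+\gamma}$.

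The main obstacle is the $x$-derivative analysis, namely the linear-growth bound for $\nabla_x\Phi$ and especially the H\"older estimate \eqref{6.25}. The subtlety is that $x$ enters \eqref{6.21} in three distinct ways: through $b(t,x,y)$ directly, through $\nabla_x\hat{u}_t(s,x)$ (which pulls in the second derivative of $\hat{u}_t$ via the chain rule), and through the frozen dynamics $Y_s^{x,y}$ (because the drift $f(x,\cdot)$ depends on $x$), with a matching subtraction inside $\bar{\check{b}}_t$. Differentiating \eqref{6.21} produces sums of terms of the form $\mathbb{E}[\nabla_x\check{b}_t(s,x,Y_s^{x,y})]$ and $\mathbb{E}[\nabla_y\check{b}_t(s,x,Y_s^{x,y})\nabla_x Y_s^{x,y}]$, and the control on $\nabla_x Y_s^{x,y}$ would follow by combining the $x$-Lipschitz continuity of $f$ with the dissipativity in $y$ via the Gronwall-plus-dissipation argument used in Lemma \ref{L41}, producing at worst linear-in-$s$ growth before exponential decay takes over and giving the $(1+|y|)$ bound in \eqref{6.24}.

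The H\"older estimate \eqref{6.25} would then be handled by splitting the integral in $s$ at a threshold depending on $|x_1-x_2|$: on short times one uses the $C^1$ bound just established together with the exponential factor, and on long times one exploits the $C^\gamma$ regularity of $\nabla_x b$ and $\nabla_x^2\hat{u}_t$ (the latter coming from \eqref{6.12}). The ergodic decay $e^{-\beta s/2}$ absorbs the polynomial-in-$|x_1-x_2|$ contributions so that the constant remains uniform in $t\in[0,T]$, recovering the form of \eqref{6.25} in exactly the same way as in Proposition 3.3 of \cite{SXX}, with the only new ingredient being the presence of $\nabla_x\hat{u}_t$ whose uniform boundedness in $C^{1+\gamma}_x$ over $[0,T]$ is already ensured by \eqref{6.12}.
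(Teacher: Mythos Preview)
Your proposal is correct and follows essentially the same approach as the paper's own proof, which simply refers back to Theorem \ref{T51} and \cite[Proposition 3.3]{SXX}, noting that It\^{o}'s formula verifies \eqref{6.21} solves \eqref{6.19} and that the regularity estimates are inherited from those of $b(t,x,y)$. Your sketch is in fact considerably more detailed than the paper's terse treatment, spelling out explicitly how the exponential ergodicity from Lemma \ref{L42}, the contraction from Lemma \ref{L41}, and the uniform $C_b^{2+\gamma}$ bound \eqref{6.12} on $\hat{u}_t$ combine to yield each of \eqref{6.23}--\eqref{6.25}.
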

\begin{proof}
	Our proof mainly refers on Theorem \ref{T51} and \cite[Proposition 3.3]{SXX} . By It\^{o} formula, \eqref{6.21} is a solution of \eqref{6.19}, and we have  $	\Phi(t,\cdot,y)\in C^{1+\gamma}_{b}(\mathbb{R}^{d_{1}})$, $	\Phi(t,x,\cdot)\in C_{b}^{2+\gamma}(\mathbb{R}^{ d_{2}})$ can be induced from the regularity of $b(t,x,y)$, other properties follow from \cite{SXX}.
\end{proof}

\begin{remark}\label{R61}
When we consider $\alpha_{2}=2$, $\eta_{\varepsilon}=\varepsilon$ and attempt to take diffusive scaling to dreive the averaged equation as It\^{o} process, the corrector equation forms
\begin{equation}\label{6.R1}
	\begin{split}
		\mathcal{L}_{2}\Phi(t,x,y)-(-\Delta_{x})^{\frac{\alpha_{1}}{2}}\hat{u}(t,x)-\Delta_{x}\hat{u}(t,x)=0,
	\end{split}
\end{equation}
 unfortunately, direct computation demonstrates that the integral $$\int_{\mathbb{R}^{ d_{2}}} \left[  -(-\Delta_{x})^{\frac{\alpha_{1}}{2}}\hat{u}(t,x)-\Delta_{x}\hat{u}(t,x)\right]  \mu^{x}(dy)\neq0,$$ thereby violating the critical Centering condition, which precludes the existence and local boundedness of the solution in probabilistic representation
 	\begin{equation}\label{6.R2}
 	\begin{split}
 		\Phi(t,x,y)=\int^{\infty}_{0}\mathbb{E}\left[ -(-\Delta_{x})^{\frac{\alpha_{1}}{2}}\hat{u}(t,x)-\Delta_{x}\hat{u}(t,x)\right]ds.
 	\end{split}
 \end{equation}
 Actually, averaging principle of multiscale systems can also be viewed as the variant of functional law of large numbers \cite{RX2}, i.e., for the Markov process $Y_{t}\in\R^d$,
 \begin{align*}
 	\lim_{\varepsilon\rightarrow0}\varepsilon\int_{0}^{\frac{t}{\varepsilon}}f(x,Y_{s})ds=\lim_{\varepsilon\rightarrow0}\int_{0}^{t}f(x,Y_{\frac{s}{\varepsilon}})ds=\int_{\R^d} f(x,y)\mu(dy)=\bar{f}(x),
 \end{align*}
 here $\mu$ is the invariant measure for the transition
 semigroup of $Y_{t}$ and is independent of $x$, so we can deduce that the original $\al$-stable process cannot be replaced by Brownian motion.
\end{remark}

\subsection{LLN type estimate for $b(t,x,y)$ in weak convergence}

Our method mainly follows from Section 5.2,  here we define  $\bar{b}(t,x)=\int_{\mathbb{R}^{ d_{2}}}b(t,x,y)\mu^{x}(dy),$ consider the following nonlocal Poisson equation, 
\begin{equation}\label{6.19-1}
	\begin{split}
		\mathcal{L}_{2}\Phi(t,x,y)+( b(t,x,y)-\bar{b}(t,x))=0.
	\end{split}
\end{equation}
\begin{remark}
	From definition of $\phi(x)\in C^{2+\gamma}_{b}(\mathbb{R}^{d_{1}})$ in \eqref{6.10}, with analysis of \eqref{6.12} and \eqref{6.15}, regularities of $b(t,x,y)$ with respect to $t$ and $x$ are essential to our analysis, so we let $b(\cdot,\cdot,y)\in C_{b}^{1,1+\gamma}(\mathbb{R}^{1+d_{1}})$.
\end{remark}
Let $b(\cdot,\cdot,y)\in C_{b}^{1,1+\gamma}(\mathbb{R}^{1+d_{1}})$ satisfies Lipschitz condition, growth condition, dissipative condition, $ \bar{b}(t,x)=\int_{\mathbb{R}^{d_{2}}}b(t,x,y)\mu^{x}(dy),$
then we have the following theorem similar to Theorem  \ref{T51-1}.
\begin{theorem}\label{T61-1}
	$\forall x\in \mathbb{R}^{d_{1}}, $  $y\in \mathbb{R}^{d_{2}} $, and $t\in [0,T]$, 
	 $b(\cdot,\cdot,\cdot)\in C_{b}^{1, 1+\gamma,2}$, $\gamma\in (0,1)$ we define 
	\begin{equation}\label{6.2-1}
		\begin{split}
		\Phi(t,x,y)=\int^{\infty}_{0}\left( \mathbb{E}b(t,x,Y_{s}^{x,y})-\bar{b}(t,x)\right) ds,
		\end{split}
	\end{equation}
	then $\Phi(t,x,y)$ is a solution of \eqref{6.19-1} 
	$\exists C>0$ s.t.,
	\begin{equation}\label{6.3-1}
		\sup_{t\in [0,T]}\sup_{x\in \mathbb{R}^{d_{1}}}|\Phi(t,x,y)|\leq C_{T}(1+|y|),
	\end{equation}
	\begin{equation}\label{6.4-1}
		\sup_{t\in [0,T]}\sup_{\substack{x\in \mathbb{R}^{d_{1}}\\ y\in \mathbb{R}^{d_{2}}}}|\nabla_{y}\Phi(t,x,y)|\leq C_{T},
	\end{equation}
\end{theorem}
\begin{proof}
The proof is analogous to Theorem \ref{T51-1}.
\end{proof}

\begin{theorem}\label{T62}
	Suppose that $b(t,x,y)\in C_{p}^{\frac{v}{\alpha_{1}},v,2+\gamma}\cap C_{b}^{1,1+\gamma,2}$, $v\in((\alpha_{1}-\alpha_{2})^{+},\alpha_{1}]$,  $\gamma\in (0,1)$ satisfies Lipschitz condition, growth condition, dissipative condition,  then we have 
	\begin{equation}\label{6.31}
		\sup_{t\in [0,T]}\mathbb{E}\int_{0}^{t}\left( b(s,X_{s}^{\varepsilon},Y_{s}^{\varepsilon})-\bar{b}(s,X_{s}^{\varepsilon})\right)ds\leq C_{T,x,y}\cdot \left(\eta_{\varepsilon}^{\frac{v}{\alpha_{2}}}+\eta_{\varepsilon}^{1-\frac{\alpha_{1}-v}{\alpha_{2}}}+\frac{\eta_{\varepsilon}^{1-\frac{1-(1\wedge v)}{\alpha_{2}}}}{\gamma_{\varepsilon}}+\frac{\eta_{\varepsilon}}{\beta_{\varepsilon}}\right).
	\end{equation}
\end{theorem}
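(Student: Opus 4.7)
The proof will mirror the strong-convergence argument of Theorem \ref{T52}, exploiting the two simplifications highlighted in Remark \ref{R22}: under an expectation the $\mc F_t$-martingales arising from It\^o's formula vanish, and the weaker uniform bound $\sup_{\varepsilon\in(0,1)}\sup_{s\geq 0}\mb E|Y_s^\varepsilon|^p\leq C_p(1+|y|^p)$ from \eqref{3.6} may be used in place of the costlier $\mb E(\sup_{s\in[0,T]}|Y_s^\varepsilon|^p)\leq C_{T,p}\eta_\varepsilon^{-p/\alpha_2}$ of Lemma \ref{L43}. These are precisely what allow the right-hand side of \eqref{6.31} to avoid the extra factor $\eta_\varepsilon^{-1/\alpha_2}$ present in the strong counterpart \eqref{5.44}.

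Concretely, let $\Phi(t,x,y)$ be the solution of \eqref{6.19-1} produced by Theorem \ref{T61-1} and let $\Phi_n$ denote its mollification in $(t,x)$ as defined by \eqref{5.56}. I would apply It\^o's formula to $\Phi_n(s,X_s^\varepsilon,Y_s^\varepsilon)$ and isolate $\tfrac{1}{\eta_\varepsilon}\int_0^t\mc L_2\Phi_n\,ds$ exactly as \eqref{5.47} is extracted from \eqref{5.46}. Taking expectations kills the compensated-Poisson martingales $M_{n,t}^{1,\varepsilon},M_{n,t}^{2,\varepsilon}$ of \eqref{5.46-1}--\eqref{5.46-2}; using the Poisson identity $b(t,x,y)-\bar b(t,x)=-\mc L_2\Phi(t,x,y)$, one obtains the decomposition
\begin{equation*}
\mb E\!\int_0^t\!(b-\bar b)\,ds=\mb E\!\int_0^t\![\mc L_2\Phi_n-\mc L_2\Phi]\,ds+\eta_\varepsilon\mb E\bigl[\Phi_n(t,X_t^\varepsilon,Y_t^\varepsilon)-\Phi_n(0,x,y)\bigr]-\eta_\varepsilon\mb E\!\int_0^t\!\bigl[\partial_s\Phi_n+\mc L_1\Phi_n+\tfrac1{\gamma_\varepsilon}\mc L_3\Phi_n+\tfrac1{\beta_\varepsilon}\mc L_4\Phi_n\bigr]ds.
\end{equation*}

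Next I would estimate each piece using Lemma \ref{L51} applied to $\Phi$, which inherits time-space H\"older regularity $C_b^{v/\alpha_1,v}$ from $b$ via the probabilistic representation \eqref{6.2-1}. This yields $\|\Phi-\Phi_n\|_\infty\leq Cn^{-v}(1+|y|^m)$, $\|\partial_s\Phi_n\|_\infty,\|(-\Delta_x)^{\alpha_1/2}\Phi_n\|_\infty\leq Cn^{\alpha_1-v}(1+|y|^m)$ and $\|\nabla_x\Phi_n\|_\infty\leq Cn^{1-(1\wedge v)}(1+|y|^m)$, complemented by the uniform bounds $\|\Phi\|_\infty\leq C_T(1+|y|)$ and $\|\nabla_y\Phi\|_\infty\leq C_T$ from Theorem \ref{T61-1}. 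In every integrand the $Y_s^\varepsilon$- and $X_s^\varepsilon$-moments are absorbed by the uniform-in-$(\varepsilon,s)$ bounds \eqref{3.4}--\eqref{3.6}---this is the decisive point absent from the strong proof---yielding
\begin{equation*}
\sup_{t\in[0,T]}\mb E\!\int_0^t\!(b-\bar b)\,ds\leq C_{T,x,y}\Bigl(n^{-v}+\eta_\varepsilon n^{\alpha_1-v}+\eta_\varepsilon+\tfrac{\eta_\varepsilon n^{1-(1\wedge v)}}{\gamma_\varepsilon}+\tfrac{\eta_\varepsilon}{\beta_\varepsilon}\Bigr).
\end{equation*}

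Finally, choosing $n=\eta_\varepsilon^{-1/\alpha_2}$ balances the first two terms and produces the four exponents displayed in \eqref{6.31}; the spurious pure $\eta_\varepsilon$ coming from the boundary term is absorbed either by $\eta_\varepsilon^{v/\alpha_2}$ (when $v\leq\alpha_2$) or by $\eta_\varepsilon/\gamma_\varepsilon$ coming from the $\mc L_3$-term (when $v>\alpha_2$, which forces $v>1$ and reduces the numerator to $\eta_\varepsilon$). The only delicate point is that Theorem \ref{T61-1} supplies only boundedness of $\nabla_y\Phi$ together with order-$v$ H\"older regularity in $x$, so the $\mc L_1$- and $\mc L_3$-contributions must be routed through $\nabla_x\Phi_n$ at the cost $n^{1-(1\wedge v)}$; together with the time-derivative cost $n^{\alpha_1-v}$ from Lemma \ref{L51}, this is what dictates the optimal choice $n=\eta_\varepsilon^{-1/\alpha_2}$ and is essentially the only place where the weak-convergence computation diverges from a verbatim repetition of Theorem \ref{T52}.
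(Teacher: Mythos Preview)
Your proposal is correct and follows essentially the same route as the paper: you invoke Theorem \ref{T61-1} for the Poisson solution $\Phi$, mollify in $(t,x)$, apply It\^o's formula, take expectations so that the martingales $M_{n,t}^{1,\varepsilon},M_{n,t}^{2,\varepsilon}$ vanish, bound the boundary term via $\sup_{t}\mathbb{E}|Y_t^\varepsilon|$ from \eqref{3.6} rather than Lemma \ref{L43}, control the remaining pieces through Lemma \ref{L51}, and optimize with $n=\eta_\varepsilon^{-1/\alpha_2}$. Your treatment of the stray $\eta_\varepsilon$ from the boundary term is slightly more explicit than the paper's, but the argument is otherwise identical.
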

\begin{proof}
Similar to Theorem \ref{T52}, this proof also refers to \cite[Lemma 4.2]{RX}. Let $\Phi^{n}$ be the mollifyer of $\Phi$, which is the solution of \eqref{6.19-1}, since here we let $b(\cdot,\cdot,\cdot)\in C_{p}^{\frac{v}{\alpha_{1}},v,2+\gamma}\cap C_{b}^{1,1+\gamma,2}$, from PDE theory and Theorem \ref{T51-1}, we have $\Phi(\cdot,\cdot,\cdot)\in C_{p}^{\frac{v}{\alpha_{1}},v,2+\gamma}\cap C_{b}^{1,1+\gamma,2}$. Similar to \eqref{5.46}, after applying It\^{o} formula, taking expectation and utilizing the martingale property $\mathbb{E}M_{n,t}^{1,\varepsilon}=\mathbb{E}M_{n,t}^{2,\varepsilon}=0,$ we have
\begin{equation}\label{6.32}
	\begin{split}
		&\mathbb{E}\Phi^{n}(t,X_{t}^{\varepsilon},Y_{t}^{\varepsilon})=\Phi^{n}(0,x,y)+\mathbb{E}\int_{0}^{t}\partial_{s}\Phi^{n}(s,X_{s}^{\varepsilon},Y_{s}^{\varepsilon})ds+\mathbb{E}\int_{0}^{t} \mathcal{L}_{1}\Phi^{n}(s,X_{S}^{\varepsilon},Y_{s}^{\varepsilon})ds\\
	&+\frac{1}{\eta_{\varepsilon}}\left[ \mathbb{E}\int_{0}^{t}\mathcal{L}_{2}\Phi^{n}(s,X_{s}^{\varepsilon},Y_{s}^{\varepsilon})ds\right]+\frac{1}{\gamma_{\varepsilon}}\left[ \mathbb{E}\int_{0}^{t}\mathcal{L}_{3}\Phi^{n}(s,X_{s}^{\varepsilon},Y_{s}^{\varepsilon})ds\right]+\frac{1}{\beta_{\varepsilon}}\left[ \mathbb{E}\int_{0}^{t}\mathcal{L}_{4}\Phi^{n}(s,X_{s}^{\varepsilon},Y_{s}^{\varepsilon})ds\right],
	\end{split}
\end{equation}
then we have
\begin{equation}\label{6.33}
	\begin{split}
			&-\frac{1}{\eta_{\varepsilon}} \mathbb{E}\int_{0}^{t}\mathcal{L}_{2}\Phi^{n}(s,X_{s}^{\varepsilon},Y_{s}^{\varepsilon})ds	=\Phi^{n}(0,x,y)-\mathbb{E}\Phi^{n}(s,X_{t}^{\varepsilon},Y_{t}^{\varepsilon})+\mathbb{E}\int_{0}^{t}\partial_{s}\Phi^{n}(s,X_{s}^{\varepsilon},Y_{s}^{\varepsilon})ds\\
			&+\mathbb{E}\int_{0}^{t} \mathcal{L}_{1}\Phi^{n}(s,X_{s}^{\varepsilon},Y_{s}^{\varepsilon})ds+\frac{1}{\gamma_{\varepsilon}}\left[ \mathbb{E}\int_{0}^{t}\mathcal{L}_{3}\Phi^{n}(s,X_{s}^{\varepsilon},Y_{s}^{\varepsilon})ds\right]+\frac{1}{\beta_{\varepsilon}}\left[ \mathbb{E}\int_{0}^{t}\mathcal{L}_{4}\Phi^{n}(s,X_{s}^{\varepsilon},Y_{s}^{\varepsilon})ds\right],
	\end{split}
\end{equation}
from \eqref{6.19-1}, 
\begin{equation}\label{6.34}
	\begin{split}
		&	\sup_{t\in [0,T]}\mathbb{E}\int_{0}^{t}\left( b(s,X_{s}^{\varepsilon},Y_{s}^{\varepsilon})-\bar{b}(s,X_{s}^{\varepsilon})\right)ds\\
		&
	\leq\mathbb{E} \int_{0}^{T}\left|\mathcal{L}_{2}\Phi^{n}(s,X_{s}^{\varepsilon},Y_{s}^{\varepsilon})-\mathcal{L}_{2}\Phi(s,X_{s}^{\varepsilon},Y_{s}^{\varepsilon})\right|ds\\
	&+\eta_{\varepsilon}\sup_{t\in [0,T]}\left[ \mathbb{E}|\Phi^{n}(0,x,y)|+\mathbb{E}|\Phi^{n}(t,X_{t}^{\varepsilon},Y_{t}^{\varepsilon})|\right]  +\mathbb{E}\int_{0}^{T}|\partial_{s}\Phi^{n}(s,X_{s}^{\varepsilon},Y_{s}^{\varepsilon})|ds\\
	&+\mathbb{E}\int_{0}^{T} |\mathcal{L}_{1}\Phi^{n}(s,X_{s}^{\varepsilon},Y_{s}^{\varepsilon})|ds+\frac{1}{\gamma_{\varepsilon}} \mathbb{E}\int_{0}^{T}|\mathcal{L}_{3}\Phi_{t}^{n}(s,X_{s}^{\varepsilon},Y_{s}^{\varepsilon})|ds+\frac{1}{\beta_{\varepsilon}} \mathbb{E}\int_{0}^{T}|\mathcal{L}_{4}\Phi^{n}(s,X_{s}^{\varepsilon},Y_{s}^{\varepsilon})|ds	\\
	&=I_{1}+I_{2}+I_{3}+I_{4}+I_{5}+I_{6},
	\end{split}
\end{equation}
specially, by \eqref{6.3-1} in Theorem \ref{T61-1}, and \eqref{3.6},  we estimate $I_{2}$ here,
\begin{equation}
	\begin{split}
	\eta_{\varepsilon}\sup_{t\in [0,T]}\left[  \mathbb{E}|\Phi^{n}(0,x,y)|+\mathbb{E}|\Phi^{n}(t,X_{t}^{\varepsilon},Z_{t}^{\varepsilon})|\right]  &\leq \eta_{\varepsilon}C_{T}\sup_{t\in [0,T]}\left[  \mathbb{E}|\Phi(0,x,y)|+\mathbb{E}|\Phi(t,X_{t}^{\varepsilon},Y_{t}^{\varepsilon})|\right] \\
	&\leq \eta_{\varepsilon}C_{T}\sup_{t\in [0,T]}\mathbb{E}(1+|y|+|Y_{t}^{\varepsilon}|)\leq \eta_{\varepsilon}C_{T}(1+|y|),
	\end{split}\nonumber
\end{equation}
set $n=\eta_{\varepsilon}^{-\frac{1}{\alpha_{2}}}$, take similar precedure in the proof of Theorem \ref{T52},  we obtain
\begin{equation}\label{6.35}
	\begin{split}
	\sup_{t\in [0,T]}\mathbb{E}\int_{0}^{t}\left( b(s,X_{s}^{\varepsilon},Y_{s}^{\varepsilon})-\bar{b}(s,X_{s}^{\varepsilon})\right)ds&\leq C_{T,x,y}\cdot \left(\eta_{\varepsilon}^{\frac{v}{\alpha_{2}}}+\eta_{\varepsilon}^{1-\frac{\alpha_{1}-v}{\alpha_{2}}}+\frac{\eta_{\varepsilon}^{1-\frac{1-(1\wedge v)}{\alpha_{2}}}}{\gamma_{\varepsilon}}+\frac{\eta_{\varepsilon}}{\beta_{\varepsilon}}\right),
	\end{split}
\end{equation}
proof is complete.
\end{proof}

\subsection{CLT type estimate for $\frac{1}{\gamma_{\varepsilon}}H(t,X_{t}^{\varepsilon},Y_{t}^{\varepsilon})$  in weak convergence}

We recall that  $ H(t,x,y)$ satisfies Centering condition, then  $$\int_{\mathbb{R}^{ d_{2}}}H(t,x,y)\mu^{x}(dy)=0,$$
here $\mu^{x}$ is the invariant measure of \eqref{4.1}, and define
\begin{equation}
	\bar{c}(t,x)=\int_{\mathbb{R}^{ d_{2}}}c(x,y)\nabla_{y}\Phi(t,x,y)\mu^{x}(dy),
	\nonumber
\end{equation}
\begin{equation}
	\bar{H}(t,x)=\int_{\mathbb{R}^{ d_{2}}}H(t,x,y)\nabla_{x}\Phi(t,x,y)\mu^{x}(dy),
	\nonumber
\end{equation}
$\Phi(t,x,y)$ is the solution of following equation
\begin{equation}\label{6.36-1}
	\mathcal{L}_{2}(t,x,y)\Phi(t,x,y)+H(t,x,y)=0.
	\end{equation}

\begin{theorem}\label{T63}
	Suppose that  Lipschitz condition, growth condition, dissipative condition valid,   then we have 
	
	Regime 1:  $H(t,x,y)\in C_{p}^{\frac{v}{\alpha_{1}},v,2+\gamma}\cap C_{b}^{1,1+\gamma,2}$, $v\in((\alpha_{1}-\alpha_{2})^{+},\alpha_{1}]$,  $\gamma\in (0,1)$, $ \lim\limits_{\varepsilon \rightarrow0}\frac{ \eta_{\varepsilon}^{\left[ \frac{v}{\alpha_{2}}\wedge \left( 1-\frac{\alpha_{1}-v}{\alpha_{2}} \right)  \right]  }}{\gamma_{\varepsilon}}=0$, and  $b(t,x,y)\in C_{p}^{\frac{v}{\alpha_{1}},v,2+\gamma}\cap C_{b}^{1,1+\gamma,2}$,
	\begin{equation}\label{6.37}
	\sup_{t\in [0,T]}\mathbb{E}\int_{0}^{t}\left(\frac{1}{\gamma_{\varepsilon}} H(s,X_{s}^{\varepsilon},Y_{s}^{\varepsilon}) \right)ds\leq C_{T,x,y}\cdot\left( \frac{ \eta_{\varepsilon}^{\left[ \frac{v}{\alpha_{2}}\wedge \left( 1-\frac{\alpha_{1}-v}{\alpha_{2}} \right)  \right]  }}{\gamma_{\varepsilon}}+\frac{ \eta_{\varepsilon}^{1-\frac{1-(1\wedge v)}{\alpha_{2}}}}{\gamma^{2}_{\varepsilon}}+\frac{\eta_{\varepsilon}}{\gamma_{\varepsilon}\beta_{\varepsilon}}\right);
	\end{equation}
	
	Regime 2:  $H(t,x,y)\in C_{p}^{\frac{v}{\alpha_{1}},v,2+\gamma}\cap C_{b}^{1,1+\gamma,2}$, $v\in((\alpha_{1}-\alpha_{2})^{+},\alpha_{1}]$,  $\gamma\in (0,1)$, $ \lim\limits_{\varepsilon \rightarrow0}\frac{ \eta_{\varepsilon}^{\left[ \frac{v}{\alpha_{2}}\wedge \left( 1-\frac{\alpha_{1}-v}{\alpha_{2}} \right)  \right]  }}{\gamma_{\varepsilon}}=0$, let $b(t,x,y)\in C_{p}^{\frac{v}{\alpha_{1}},v,2+\gamma}\cap C_{b}^{1,1+\gamma,2}$,
	\begin{equation}\label{6.38}
		\sup_{t\in [0,T]}\mathbb{E} \int_{0}^{t}\left(  \frac{1}{\gamma_{\varepsilon}}H(s,X_{s}^{\varepsilon},Y_{s}^{\varepsilon})- \bar{c}(s,X_{s}^{\varepsilon})\right) ds  \leq  C_{T,x,y}\left(\frac{ \eta_{\varepsilon}^{\left[ \frac{v}{\alpha_{2}}\wedge \left( 1-\frac{\alpha_{1}-v}{\alpha_{2}} \right)  \right]  }}{\gamma_{\varepsilon}}+\frac{ \eta_{\varepsilon}^{1-\frac{1-(1\wedge v)}{\alpha_{2}}}}{\gamma^{2}_{\varepsilon}}+\gamma_{\varepsilon}\right) ;
	\end{equation}
	
	Regime 3:  $H(t,x,y)\in C_{p}^{\frac{v}{\alpha_{1}},2+\gamma,2+\gamma}\cap C_{b}^{1,2+\gamma,2+\gamma}$, $v\in ( \frac{\alpha_{2}}{2}\vee \frac{2\alpha_{1}-\alpha_{2}}{2}  ,\alpha_{1}]$, $\gamma\in(\alpha_{1}-1,1)$, and specially we have $b(\cdot,\cdot,\cdot)\in C_{p}^{\frac{v}{\alpha_{1}},v,2+\gamma}\cap C_{b}^{1,1+\gamma,2+\gamma}$,
	\begin{equation}\label{6.39}
\sup_{t\in [0,T]}	\mathbb{E} \int_{0}^{t}\left(  \frac{1}{\gamma_{\varepsilon}}H(s,X_{s}^{\varepsilon},Y_{s}^{\varepsilon})- \bar{H}(s,X_{s}^{\varepsilon})\right) ds  \leq  C_{T,x,y}\left( \gamma_{\varepsilon}^{\frac{2v}{\alpha_{2}}-\left[ 1\vee \left( \frac{2\alpha_{1}}{\alpha_{2}}-1\right)   \right] }+\frac{\gamma_{\varepsilon}}{\beta_{\varepsilon}}\right) ;
	\end{equation}
	
	Regime 4:  $H(t,x,y)\in C_{p}^{\frac{v}{\alpha_{1}},2+\gamma,2+\gamma}\cap C_{b}^{1,2+\gamma,2+\gamma}$,  $v\in ( \frac{\alpha_{2}}{2}\vee \frac{2\alpha_{1}-\alpha_{2}}{2}  ,\alpha_{1}]$, $\gamma\in(\alpha_{1}-1,1)$, here we have $b(\cdot,\cdot,\cdot)\in C_{p}^{\frac{v}{\alpha_{1}},v,2+\gamma}\cap C_{b}^{1,1+\gamma,2+\gamma}$,
	\begin{equation}\label{6.40}
		\begin{split}
			 \sup_{t\in [0,T]}\mathbb{E} \int_{0}^{t}\left(  \frac{1}{\gamma_{\varepsilon}}H(s,X_{s}^{\varepsilon},Y_{s}^{\varepsilon})- \bar{c}(s,X_{s}^{\varepsilon})-\bar{H}(s,X_{s}^{\varepsilon})\right)ds\leq C_{T,x,y}\cdot \gamma_{\varepsilon}^{\frac{2v}{\alpha_{2}}-\left[ 1\vee \left( \frac{2\alpha_{1}}{\alpha_{2}}-1\right)   \right] }.
		\end{split}
	\end{equation}
\end{theorem}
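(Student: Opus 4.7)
The plan is to mirror the approach of Theorem \ref{T53} and Remark \ref{R51}, replacing supremum-expectation estimates by plain expectation estimates so that the martingale terms $M_{t,n}^{1,\varepsilon}$ and $M_{t,n}^{2,\varepsilon}$ vanish upon taking $\mathbb{E}$. Let $\Phi^n$ denote the mollification in $t$ and $x$ of the solution $\Phi$ of \eqref{6.36-1} provided by Theorem \ref{T61}, with approximation estimates furnished by Lemma \ref{L51}. Applying It\^{o}'s formula to $\Phi^n(t,X_t^\varepsilon,Y_t^\varepsilon)$, taking expectations, isolating $\mathcal{L}_2\Phi^n$ and using the Poisson identity $\mathcal{L}_2\Phi=-H$ rewrites $\frac{1}{\gamma_\varepsilon}\mathbb{E}\int_0^t H(s,X_s^\varepsilon,Y_s^\varepsilon)\,ds$ as the sum of (i) a mollification error $\mathbb{E}\int_0^t|\mathcal{L}_2\Phi^n-\mathcal{L}_2\Phi|\,ds$, (ii) a deterministic boundary contribution $\frac{\eta_\varepsilon}{\gamma_\varepsilon}(\Phi^n(0,x,y)-\mathbb{E}\Phi^n(t,X_t^\varepsilon,Y_t^\varepsilon))$, (iii) drift--generator terms involving $\partial_s\Phi^n$ and $\mathcal{L}_1\Phi^n$, and (iv) the cross terms $\frac{\eta_\varepsilon}{\gamma_\varepsilon^2}\mathbb{E}\int_0^t\mathcal{L}_3\Phi^n\,ds$ and $\frac{\eta_\varepsilon}{\gamma_\varepsilon\beta_\varepsilon}\mathbb{E}\int_0^t\mathcal{L}_4\Phi^n\,ds$. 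Each piece is controlled by combining the moment bounds of Theorem \ref{T32}, the regularity estimates of Lemma \ref{L51}, and the ergodicity of Lemma \ref{L42}.

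For Regime 1, no further manipulation is needed: with the choice $n=\eta_\varepsilon^{-1/\alpha_2}$ the four bullets above produce exactly the three terms in \eqref{6.37}, and the decisive advantage relative to \eqref{5.59} is that removing $\sup_t$ from inside $\mathbb{E}$ replaces the moment factor $\mathbb{E}\sup_t|Y_t^\varepsilon|^p$ (of order $\eta_\varepsilon^{-p/\alpha_2}$ by Lemma \ref{L43}) by the uniform bound of Theorem \ref{T32}. Regime 2 further requires centering $\frac{1}{\beta_\varepsilon}\mathcal{L}_4\Phi^n=\frac{1}{\beta_\varepsilon}c\nabla_y\Phi^n$ against $\bar c(t,x)$: since $c(x,y)\nabla_y\Phi(t,x,y)$ satisfies the centering condition by the very definition of $\bar c$ and, under $H\in C_b^{v/\alpha_1,v,3+\gamma}$, belongs to the H\"{o}lder class required by Theorem \ref{T62}, we apply that theorem as an inner LLN-type bound, and the identity $\eta_\varepsilon=\gamma_\varepsilon\beta_\varepsilon$ together with optimization of $n$ produces the extra $\gamma_\varepsilon$ appearing in \eqref{6.38}.

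Regimes 3 and 4 impose the stringent scalings $\eta_\varepsilon=\gamma_\varepsilon^2$ and $\eta_\varepsilon=\gamma_\varepsilon^2=\gamma_\varepsilon\beta_\varepsilon$ respectively. Here $\frac{1}{\gamma_\varepsilon}\mathcal{L}_3\Phi^n=\frac{1}{\gamma_\varepsilon}H\nabla_x\Phi^n$ no longer vanishes and must be centered by $\bar H$ (and in Regime 4 also $\mathcal{L}_4\Phi^n$ by $\bar c$). This forces us to control $\nabla_x\Phi$ at the linear-growth level of Theorem \ref{T61} rather than with an $n^{1-(1\wedge v)}$ blow-up, which explains the strengthened hypothesis on the $x$-regularity of $H$ to $C_b^{2+\gamma}$: Lemma \ref{L51} then yields $\|\nabla_x\Phi^n\|_\infty\leq C(1+|y|^m)$. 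The centered increments $H\nabla_x\Phi-\bar H$ and $c\nabla_y\Phi-\bar c$ are handled by a second application of Theorem \ref{T62}. Balancing the $n^{\alpha_1-v}$ contribution from $\mathcal{L}_1\Phi^n$ against the $n^{-v}$ term from the mollification error via $n=\gamma_\varepsilon^{-2/\alpha_2}$, and substituting $\eta_\varepsilon=\gamma_\varepsilon^2$, produces the dominant rate $\gamma_\varepsilon^{2v/\alpha_2-[1\vee(2\alpha_1/\alpha_2-1)]}$; the lower bound $v>\alpha_2/2\vee(2\alpha_1-\alpha_2)/2$ is exactly what makes this exponent positive.

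The main obstacle is the scale bookkeeping in Regimes 3 and 4: the very ratio $\eta_\varepsilon^{1-1/\alpha_2}/\gamma_\varepsilon=\gamma_\varepsilon^{1-2/\alpha_2}$ that diverges for $1<\alpha_2<2$ and obstructed the strong-convergence version of Remark \ref{R51} reappears here in factors of the form $\frac{\eta_\varepsilon}{\gamma_\varepsilon}\mathbb{E}|\Phi^n(t,X_t^\varepsilon,Y_t^\varepsilon)|$ and $\frac{\eta_\varepsilon}{\gamma_\varepsilon}\mathbb{E}|M_{t,n}^{2,\varepsilon}|$. The second is identically zero after expectation, while the first reduces to $\frac{\eta_\varepsilon}{\gamma_\varepsilon}(1+|y|)=\gamma_\varepsilon(1+|y|)\to 0$ because Theorem \ref{T32} provides $\sup_t\mathbb{E}|Y_t^\varepsilon|^p\leq C_p(1+|y|^p)$ uniformly in $\varepsilon$. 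This is precisely the weak-convergence advantage emphasized in Remark \ref{R22}, and its careful deployment across all four regimes constitutes the main technical content of the proof.
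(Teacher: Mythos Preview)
Your proposal is correct and follows essentially the same route as the paper: apply It\^{o}'s formula to the mollified Poisson corrector $\Phi^n$, take expectation so that the martingale terms $M_{n,t}^{1,\varepsilon}$ and $M_{n,t}^{2,\varepsilon}$ vanish, and then bound the remaining pieces using Lemma \ref{L51}, Theorem \ref{T32}, and an inner application of Theorem \ref{T62} to the centered terms $\mathcal{L}_4\Phi^n-\bar c$ and $\mathcal{L}_3\Phi^n-\bar H$ as dictated by the regime. Your identification of the key mechanism---that replacing $\mathbb{E}\sup_t|Y_t^\varepsilon|^p$ by $\sup_t\mathbb{E}|Y_t^\varepsilon|^p$ removes the divergent factor $\eta_\varepsilon^{-p/\alpha_2}$ and thereby unlocks Regimes 3 and 4---is exactly the paper's Remark \ref{R22}, and your choice $n=\eta_\varepsilon^{-1/\alpha_2}$ agrees with the paper throughout.
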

\begin{proof}
Similar to Theorem \ref{T53}, our method refers to \cite[Lemma 4.4]{RX}. 
For Regime 1, as  $H(t,x,y)\in C_{p}^{\frac{v}{\alpha_{1}},v,2+\gamma}\cap C_{b}^{1,1+\gamma,2}$ satisfies Centering condition, from Theorem \ref{T62}
\begin{equation}\label{6.41}
	\begin{split}
		\sup_{t\in [0,T]}\mathbb{E}\int_{0}^{t}\left(\frac{1}{\gamma_{\varepsilon}} H(s,X_{s}^{\varepsilon},Y_{s}^{\varepsilon}) \right)ds&\leq C_{T,x,y}\cdot \left( \frac{ \eta_{\varepsilon}^{\left[ \frac{v}{\alpha_{2}}\wedge \left( 1-\frac{\alpha_{1}-v}{\alpha_{2}} \right)  \right]  }}{\gamma_{\varepsilon}}+\frac{ \eta_{\varepsilon}^{1-\frac{1-(1\wedge v)}{\alpha_{2}}}}{\gamma^{2}_{\varepsilon}}+\frac{\eta_{\varepsilon}}{\gamma_{\varepsilon}\beta_{\varepsilon}}\right) .
	\end{split}
\end{equation}

For Regime 2, let $\Phi^{n}$ be the mollifyer of $\Phi$, which is the solution of \eqref{6.36-1}, then by It\^{o} formula,
\begin{equation}\label{6.42}
	\begin{split}
		&\Phi^{n}(t,X_{t}^{\varepsilon},Y_{t}^{\varepsilon})
		=\Phi^{n}(x,y)+\int_{0}^{t}\partial_{s}\Phi_{t}^{n}(s,X_{s}^{\varepsilon},Y_{s}^{\varepsilon})ds+\int_{0}^{t}\mathcal{L}_{1}(s,x,y)\Phi^{n}(s,X_{s}^{\varepsilon},Y_{s}^{\varepsilon})ds\\
		&+\frac{1}{\eta_{\varepsilon}}\int_{0}^{t}\mathcal{L}_{2}(x,y)\Phi^{n}(s,X_{s}^{\varepsilon},Y_{s}^{\varepsilon})ds+\frac{1}{\gamma_{\varepsilon}}\int_{0}^{t}\mathcal{L}_{3}(s,x,y)\Phi^{n}(s,X_{s}^{\varepsilon},Y_{s}^{\varepsilon})ds\\
		&+\frac{1}{\beta_{\varepsilon}}\int_{0}^{t}\mathcal{L}_{4}(x,y)\Phi^{n}(s,X_{s}^{\varepsilon},Y_{s}^{\varepsilon})ds,
	\end{split}
\end{equation}
for $ \eta_{\varepsilon}=\gamma_{\varepsilon}\beta_{\varepsilon}$, then we have,
\begin{equation}\label{6.43}
	\begin{split}
	&	\sup_{t\in [0,T]}\mathbb{E}\int_{0}^{t}(\frac{1}{\gamma_{\varepsilon}} H(s,X_{s}^{\varepsilon},Y_{s}^{\varepsilon})- \bar{c}(s,X_{s}^{\varepsilon}))ds\\
		&=\frac{1}{\gamma_{\varepsilon}}	\sup_{t\in [0,T]}\mathbb{E}\int_{0}^{t}\left( \mathcal{L}_{2}(x,y)\Phi^{n}(s,X_{s}^{\varepsilon},Y_{s}^{\varepsilon})-\mathcal{L}_{2}(x,y)\Phi(s,X_{s}^{\varepsilon},Y_{s}^{\varepsilon})\right) ds\\
		&+\frac{\eta_{\varepsilon}}{{\gamma_{\varepsilon}}}	\sup_{t\in [0,T]}\mathbb{E}\left[\left(  \Phi^{n}(x,y)-\Phi^{n}(s,X_{t}^{\varepsilon},Y_{t}^{\varepsilon})\right) +\int_{0}^{t}\partial_{s}\Phi^{n}(s,X_{s}^{\varepsilon},Y_{s}^{\varepsilon})+\mathcal{L}_{1}(s,x,y)\Phi^{n}(s,X_{s}^{\varepsilon},Y_{s}^{\varepsilon})ds\right.	\\
		&\left. +\frac{1}{\gamma_{\varepsilon}}\int_{0}^{t}\mathcal{L}_{3}(s,x,y)\Phi^{n}(s,X_{s}^{\varepsilon},Y_{s}^{\varepsilon})ds\right]+\sup_{t\in [0,T]}\mathbb{E}\int_{0}^{t}(\mathcal{L}_{4}(x,y)\Phi^{n}(s,X_{s}^{\varepsilon},Y_{s}^{\varepsilon})- \bar{c}(s,X_{s}^{\varepsilon}))ds\\
		&=I_{1}+I_{2}+I_{3}+I_{4}+I_{5}+I_{6},
	\end{split}
\end{equation}
thus analogous to the proof of \eqref{5.66} in Theorem \ref{T53}, by Theorem \ref{T61-1},
\begin{equation}\label{6.44}
	\begin{split}
I_{1}+I_{2}+I_{3}+I_{4}+I_{5}\leq C_{T,x,y}\left( \frac{n^{-v}}{\gamma_{\varepsilon}}+\frac{\eta_{\varepsilon}}{\gamma_{\varepsilon}}n^{\alpha_{1}-v}+\frac{\eta_{\varepsilon}}{\gamma_{\varepsilon}^{2}}n^{1-(1\wedge v)}\right) ,
	\end{split}
\end{equation}
in particular,
\begin{equation}\label{6.45}
	\begin{split}
	I_{6}&\leq\mathbb{E}\left( \int^{T}_{0}|c(X_{s}^{\varepsilon},Y_{s}^{\varepsilon})\nabla_{y} \Phi^{n}(s,X_{s}^{\varepsilon},Y_{s}^{\varepsilon})-\bar{c}(s,X_{s}^{\varepsilon})|ds\right) \\
		&\leq\mathbb{E}\left( \int^{T}_{0}|c(X_{s}^{\varepsilon},Y_{s}^{\varepsilon})\nabla_{y} \Phi^{n}(s,X_{s}^{\varepsilon},Y_{s}^{\varepsilon})-c(X_{s}^{\varepsilon},Y_{s}^{\varepsilon})\nabla_{y} \Phi(s,X_{s}^{\varepsilon},Y_{s}^{\varepsilon})|ds\right)\\ 
		&+\mathbb{E}\left( \int^{T}_{0}|c(X_{s}^{\varepsilon},Y_{s}^{\varepsilon})\nabla_{y} \Phi(s,X_{s}^{\varepsilon},Y_{s}^{\varepsilon})-\bar{c}(s,X_{s}^{\varepsilon})|ds\right)=I_{61}+I_{62},
	\end{split}
\end{equation}
similar to proof in Theorem \ref{T53}, using Lemma \ref{L51}, we have 
\begin{equation}\label{6.46}
	\begin{split}
		I_{61}\leq  C_{T,x,y}n^{-v},
	\end{split}
\end{equation}
from  $H(t,x,y)\in C_{p}^{\frac{v}{\alpha_{1}},v,2+\gamma}\cap C_{b}^{1,1+\gamma,2}$, we have $ \Phi \in   C_{p}^{\frac{v}{\alpha_{1}},v,2+\gamma}\cap C_{b}^{1,1+\gamma,2}$, then we have $ c\cdot \nabla_{y} \Phi \in C_{p}^{\frac{v}{\alpha_{1}},v,1+\gamma}\cap C_{b}^{1,1+\gamma,1}$,  and $I_{62} $ satisfies Centering condition, recall $1+\gamma>1>\delta$, by Theorem \ref{T62},
\begin{equation}\label{6.47}
	\begin{split}
	I_{62}\leq  C_{T,x,y} \left(\eta_{\varepsilon}^{\frac{v}{\alpha_{2}}}+\eta_{\varepsilon}^{1-\frac{\alpha_{1}-v}{\alpha_{2}}}+\frac{\eta_{\varepsilon}^{1-\frac{1-(1\wedge v)}{\alpha_{2}}}}{\gamma_{\varepsilon}}+\frac{\eta_{\varepsilon}}{\beta_{\varepsilon}}\right),
	\end{split}
\end{equation}
 take $n=\eta_{\varepsilon}^{-\frac{1}{\alpha_{2}}}$,    finally we get
\begin{equation}\label{6.48}
	\begin{split}
		\sup_{t\in [0,T]}\mathbb{E}  \int_{0}^{t}\left(  \frac{1}{\gamma_{\varepsilon}}H(s,X_{s}^{\varepsilon},Y_{s}^{\varepsilon})- \bar{c}(s,X_{s}^{\varepsilon})\right) ds  \leq C_{T,x,y}\left( \frac{ \eta_{\varepsilon}^{\left[\left(  \frac{v}{\alpha_{2}}\right) \wedge \left( 1-\frac{\alpha_{1}-v}{\alpha_{2}} \right)  \right]  }}{\gamma_{\varepsilon}}+\frac{ \eta_{\varepsilon}^{1-\frac{1-(1\wedge v)}{\alpha_{2}}}}{\gamma^{2}_{\varepsilon}}+\gamma_{\varepsilon}\right) .
	\end{split}
\end{equation}

For Regime 3,  as analysed in Remark \ref{R51}, the term $	\underset{t\in [0,T]}{\sup}\underset{x\in \mathbb{R}^{d_{1}}}{\sup}|\nabla_{x}\Phi(t,x,y)|$ plays a critical role for the control of $H\cdot\nabla_{x} \Phi$, see \eqref{6.51}. This requirement necessitates the application of Theorem \ref{T61} rather than Theorem \ref{T61-1}, leading us to impose the Hölder regularity condition  $H(t,x,y)\in C_{p}^{\frac{v}{\alpha_{1}},2+\gamma,2+\gamma}\cap C_{b}^{1,2+\gamma,2+\gamma}$, moerover we let $b(\cdot,\cdot,\cdot)\in C_{p}^{\frac{v}{\alpha_{1}},v,2+\gamma}\cap C_{b}^{1,1+\gamma,2+\gamma}$ in Theorem \ref{T62},
\begin{equation}\label{6.49}
	\begin{split}
		&	\sup_{t\in [0,T]}\mathbb{E}\int_{0}^{t}(\frac{1}{\gamma_{\varepsilon}} H(s,X_{s}^{\varepsilon},Y_{s}^{\varepsilon})- \bar{H}(s,X_{s}^{\varepsilon}))ds\\
		&=\frac{1}{\gamma_{\varepsilon}}	\sup_{t\in [0,T]}\mathbb{E}\int_{0}^{t}\left( \mathcal{L}_{2}(x,y)\Phi^{n}(s,X_{s}^{\varepsilon},Y_{s}^{\varepsilon})-\mathcal{L}_{2}(x,y)\Phi(s,X_{s}^{\varepsilon},Y_{s}^{\varepsilon})\right) ds\\
		&+\frac{\eta_{\varepsilon}}{{\gamma_{\varepsilon}}}	\sup_{t\in [0,T]}\mathbb{E}\left[ \Phi^{n}(x,y)-\Phi^{n}(s,X_{t}^{\varepsilon},Y_{t}^{\varepsilon}) +\int_{0}^{t}\partial_{s}\Phi^{n}(s,X_{s}^{\varepsilon},Y_{s}^{\varepsilon})+\mathcal{L}_{1}(s,x,y)\Phi^{n}(s,X_{s}^{\varepsilon},Y_{s}^{\varepsilon})ds\right.	\\
		&\left. +\frac{1}{\beta_{\varepsilon}}\int_{0}^{t}\mathcal{L}_{4}(x,y)\Phi^{n}(s,X_{s}^{\varepsilon},Y_{s}^{\varepsilon})ds\right]+	\sup_{t\in [0,T]}\mathbb{E}\int_{0}^{t}(\mathcal{L}_{3}(s,x,y)\Phi^{n}(s,X_{s}^{\varepsilon},Y_{s}^{\varepsilon})- \bar{H}(s,X_{s}^{\varepsilon}))ds\\
		&=I_{1}+I_{2}+I_{3}+I_{4}+I_{5}+I_{6},
	\end{split}
\end{equation}
then 
\begin{equation}\label{6.50}
	\begin{split}
		I_{1}+I_{2}+I_{3}+I_{4}+I_{5}\leq C_{T,x,y}\left( \frac{n^{-v}}{\gamma_{\varepsilon}}+\frac{\eta_{\varepsilon}}{\gamma_{\varepsilon}}n^{\alpha_{1}-v}+\frac{\eta_{\varepsilon}}{\beta_{\varepsilon}\gamma_{\varepsilon}}\right) ,
	\end{split}
\end{equation}
and 
\begin{align}
		I_{6}&\leq\mathbb{E}\left( \int^{T}_{0}|H(s,X_{s}^{\varepsilon},Y_{s}^{\varepsilon})\nabla_{x} \Phi^{n}(s,X_{s}^{\varepsilon},Y_{s}^{\varepsilon})-\bar{H}(s,X_{s}^{\varepsilon})|ds\right)\nonumber \\
		&\leq\mathbb{E}\left( \int^{T}_{0}|H(s,X_{s}^{\varepsilon},Y_{s}^{\varepsilon})\nabla_{x} \Phi^{n}(s,X_{s}^{\varepsilon},Y_{s}^{\varepsilon})-H(s,X_{s}^{\varepsilon},Y_{s}^{\varepsilon})\nabla_{x} \Phi(s,X_{s}^{\varepsilon},Y_{s}^{\varepsilon})|ds\right)\nonumber\\ 
		&+\mathbb{E}\left( \int^{T}_{0}|H(s,X_{s}^{\varepsilon},Y_{s}^{\varepsilon})\nabla_{x} \Phi(s,X_{s}^{\varepsilon},Y_{s}^{\varepsilon})-\bar{H}(s,X_{s}^{\varepsilon})|ds\right)=I_{61}+I_{62},\label{6.51}
\end{align}
since $H(t,x,y)\in C_{p}^{\frac{v}{\alpha_{1}},2+\gamma,2+\gamma}\cap C_{b}^{1,2+\gamma,2+\gamma}$, we have $ H\cdot \nabla_{x} \Phi \in C_{p}^{\frac{v}{\alpha_{1}},1+\gamma,2+\gamma}\cap C_{b}^{1,1+\gamma,2+\gamma}$,  we mention that $1+\gamma>\alpha_{1}\geq v$, by \eqref{6.23} and \eqref{6.24} in Theorem \ref{T61}, with Theorem \ref{T62} where we let $b(\cdot,\cdot,\cdot)\in C_{p}^{\frac{v}{\alpha_{1}},v,2+\gamma}\cap C_{b}^{1,1+\gamma,2+\gamma}$, thus
 $$I_{6}\leq C_{T,x,y}\left( n^{-v}+\eta_{\varepsilon}^{\frac{v}{\alpha_{2}}}+\eta_{\varepsilon}^{1-\frac{\alpha_{1}-v}{\alpha_{2}}}+\frac{\eta_{\varepsilon}^{1-\frac{1-(1\wedge v)}{\alpha_{2}}}}{\gamma_{\varepsilon}}+\frac{\eta_{\varepsilon}}{\beta_{\varepsilon}}\right) ,$$
we notice that $\eta_{\varepsilon}=\gamma^{2}_{\varepsilon}$, then $\frac{ \eta_{\varepsilon}^{1-\frac{\alpha_{1}-v}{\alpha_{2}}}}{\gamma_{\varepsilon}}=\gamma_{\varepsilon}^{1-\frac{2\alpha_{1}-2v}{\alpha_{2}}}$, $\frac{ \eta_{\varepsilon}^{\frac{v}{\alpha_{2}}}}{\gamma_{\varepsilon}}=\gamma_{\varepsilon}^{\frac{2v}{\alpha_{2}}-1}$, so  we get
\begin{equation}\label{6.52}
	\begin{split}
		\sup_{t\in [0,T]}\mathbb{E}  \int_{0}^{t}\left(  \frac{1}{\gamma_{\varepsilon}}H(s,X_{s}^{\varepsilon},Y_{s}^{\varepsilon})- \bar{H}(s,X_{s}^{\varepsilon})\right) ds  \leq C_{T,x,y}\left( \gamma_{\varepsilon}^{\frac{2v}{\alpha_{2}}-\left[ 1\vee \left( \frac{2\alpha_{1}}{\alpha_{2}}-1\right)   \right] }+\frac{\gamma_{\varepsilon}}{\beta_{\varepsilon}}\right) .
	\end{split}
\end{equation}

Finally for Regime 4, let  $H(t,x,y)\in C_{p}^{\frac{v}{\alpha_{1}},2+\gamma,2+\gamma}\cap C_{b}^{1,2+\gamma,2+\gamma}$, and $b(\cdot,\cdot,\cdot)\in C_{p}^{\frac{v}{\alpha_{1}},v,2+\gamma}\cap C_{b}^{1,1+\gamma,2+\gamma}$,
\begin{align*}
		&	\sup_{t\in [0,T]}\mathbb{E}\int_{0}^{t}\Big(\frac{1}{\gamma_{\varepsilon}} H(s,X_{s}^{\varepsilon},Y_{s}^{\varepsilon})- \bar{c}(s,X_{s}^{\varepsilon})- \bar{H}(s,X_{s}^{\varepsilon})\Big)ds\\
		&=\frac{1}{\gamma_{\varepsilon}}	\sup_{t\in [0,T]}\mathbb{E}\int_{0}^{t}\left( \mathcal{L}_{2}(x,y)\Phi^{n}(s,X_{s}^{\varepsilon},Y_{s}^{\varepsilon})-\mathcal{L}_{2}(x,y)\Phi(s,X_{s}^{\varepsilon},Y_{s}^{\varepsilon})\right) ds\\
		&+\frac{\eta_{\varepsilon}}{{\gamma_{\varepsilon}}}	\sup_{t\in [0,T]}\mathbb{E}\left[ (\Phi^{n}(x,y)-\Phi^{n}(s,X_{t}^{\varepsilon},Y_{t}^{\varepsilon}))+\int_{0}^{t}\partial_{s}\Phi^{n}(s,X_{s}^{\varepsilon},Y_{s}^{\varepsilon})ds+\int_{0}^{t}\mathcal{L}_{1}(s,x,y)\Phi^{n}(s,X_{s}^{\varepsilon},Y_{s}^{\varepsilon})ds\right]\\
		& +\sup_{t\in [0,T]}\mathbb{E}\left[ \int_{0}^{t}(\mathcal{L}_{3}(s,x,y)\Phi^{n}(s,X_{s}^{\varepsilon},Y_{s}^{\varepsilon})- \bar{H}(s,X_{s}^{\varepsilon}))ds+\int_{0}^{t}(\mathcal{L}_{4}(x,y)\Phi^{n}(s,X_{s}^{\varepsilon},Y_{s}^{\varepsilon})-\bar{c}(s,X_{s}^{\varepsilon}))ds\right]  \\
		&=I_{1}+I_{2}+I_{3}+I_{4}+I_{5}+I_{6},
\end{align*}
 we have $
		I_{1}+I_{2}+I_{3}+I_{4}\leq C_{T,x,y}(\frac{n^{-v}}{\gamma_{\varepsilon}}+\frac{\eta_{\varepsilon}}{\gamma_{\varepsilon}}n^{\alpha_{1}-v})
$, additionally, we can deduce from \eqref{6.45} and \eqref{6.51}, 
\begin{align*}
I_{5}+I_{6}\leq C_{T,x,y}\left(  \eta_{\varepsilon}^{\frac{v}{\alpha_{2}}}+\eta_{\varepsilon}^{1-\frac{\alpha_{1}-v}{\alpha_{2}}}+\frac{\eta_{\varepsilon}^{1-\frac{1-(1\wedge v)}{\alpha_{2}}}}{\gamma_{\varepsilon}}+\frac{\eta_{\varepsilon}}{\beta_{\varepsilon}}\right),
\end{align*}
combining above estimates, we obtain
\begin{equation}\label{6.55}
	\begin{split}
		 \sup_{t\in [0,T]}\mathbb{E} \int_{0}^{t}\left(  \frac{1}{\gamma_{\varepsilon}}H(X_{s}^{\varepsilon},Y_{s}^{\varepsilon})- \bar{c}(s,X_{s}^{\varepsilon})- \bar{H}(s,X_{s}^{\varepsilon}) ds \right) \leq C_{T,x,y}\cdot \gamma_{\varepsilon}^{\frac{2v}{\alpha_{2}}-\left[ 1\vee \left( \frac{2\alpha_{1}}{\alpha_{2}}-1\right)   \right] },
	\end{split}\nonumber
\end{equation}
proof is complete.
\end{proof}

\section{Statements of main results}

In this section, we present the proofs of \textbf{Theorem  \ref{SCRJ}} and \textbf{Theorem  \ref{WCRD}}. Our methods are inspired by the studies in \cite{CEB} and \cite{SXX}, which are beneficial for quantitative estimates.

\subsection{Proof of \textbf{Theorem \ref{SCRJ}} }
\begin{proof}
	Observe that in Regime 1, we refer to \cite[Section 4.1, 356]{SXX}, then
	\begin{equation}\label{7.1}
		d\bar{X}^{1}_{t}=\bar{b}(t,\bar{X}^{1}_{t})dt+dL_{t}^{1},
	\end{equation}
	so that
	\begin{equation}
		X_{t}^{\varepsilon}-\bar{X}^{1}_{t}=\int_{0}^{t}\left( b(s,X_{s}^{\varepsilon},Y_{s}^{\varepsilon})-\bar{b}(s,\bar{X}^{1}_{s})+ \frac{1}{\gamma_{\varepsilon}}H(s,X_{s}^{\varepsilon},Y_{s}^{\varepsilon})\right)ds,\nonumber
	\end{equation}
	then  from Theorem \ref{T52}, \eqref{5.59} in Theorem \ref{T53}, we know that 
	\begin{equation}\label{7.6}
		\begin{split}
			\mathbb{E}\left( \sup_{t\in [0,T]}|X_{t}^{\varepsilon}-\bar{X}^{1}_{t}|^{p}\right)	&\leq\mathbb{E}\left( \sup_{t\in [0,T]}\left| \int_{0}^{t}\left( b(s,X_{s}^{\varepsilon},Y_{s}^{\varepsilon})-\bar{b}(s,\bar{X}^{1}_{s})+ \frac{1}{\gamma_{\varepsilon}}H(s,X_{s}^{\varepsilon},Y_{s}^{\varepsilon})\right) ds\right| ^{p}\right)\\
			&\leq  C_{T,p}\left(\left(  \frac{\eta_{\varepsilon}}{\gamma_{\varepsilon}\beta_{\varepsilon}}\right) ^{p}+\left(  \frac{\eta_{\varepsilon}^{1-\frac{1-(1\wedge v)}{\alpha_{2}}}}{\gamma_{\varepsilon}^{2}}\right) ^{p}+\left( \frac{\eta_{\varepsilon}^{\left[ \left( \frac{v}{\alpha_{2}}\right) \wedge \left( 1-\frac{1\vee(\alpha_{1}-v)}{\alpha_{2}}\right) \right] }}{\gamma_{\varepsilon} } \right) ^{p}\right).
		\end{split}\nonumber
	\end{equation}

		Consider the following equation in \eqref{2.26},
	\begin{equation}\label{7.9}
		\mathcal{L}_{2}(x,y)u(t,x,y)+ H(t,x,y)=0,
	\end{equation}
	then we recall the definitions in \eqref{2.24},
	\begin{equation}
		\begin{split}
			\bar{c}(t,x)&=\int_{\mathbb{R}^{ d_{2}}}c(x,y)\nabla_{y}u(t,x,y)\mu^{x}(dy),
		\end{split}	\nonumber
	\end{equation}
	here $u(t,x,y)$ is the solution of \eqref{7.9}.

	For Regime 2,  we have 
	\begin{equation}\label{7.8}
		d\bar{X}^{2}_{t}=(\bar{b}(t,\bar{X}^{2}_{t})+\bar{c}(t,\bar{X}^{2}_{t}))dt+dL_{t}^{1},
	\end{equation}
from Theorem \ref{T52} and \eqref{5.60} in Theorem \ref{T53},  $\eta_{\varepsilon}=\gamma_{\varepsilon}\beta_{\varepsilon}$, we conclude that 
	\begin{equation}\label{7.10}
		\begin{split}
			\mathbb{E}\left( \sup_{t\in [0,T]}|X_{t}^{\varepsilon}-\bar{X}^{2}_{t}|^{p}\right)&=\mathbb{E}\left( \sup_{t\in [0,T]}\left| \int_{0}^{t}\left( b(s,X_{s}^{\varepsilon},Y_{s}^{\varepsilon})-\bar{b}(s,\bar{X}^{2}_{s})+ \frac{1}{\gamma_{\varepsilon}}H(s,X_{s}^{\varepsilon},Y_{s}^{\varepsilon})-\bar{c}(s,\bar{X}^{2}_{s})\right) ds\right| ^{p}\right)\\
			&\leq  C_{T,p}\left(                                                                                                                                                                                                                                                                           \left(  \frac{\eta_{\varepsilon}^{1-\frac{1-(1\wedge v)}{\alpha_{2}}}}{\gamma_{\varepsilon}^{2}}\right) ^{p}+ \left(  \frac{\eta_{\varepsilon}^{\left[ \left( \frac{v}{\alpha_{2}}\right) \wedge \left( 1-\frac{1\vee(\alpha_{1}-v)}{\alpha_{2}}\right) \right] }}{\gamma_{\varepsilon} } \right) ^{p}+\gamma_{\varepsilon}^{p}\right),
		\end{split}\nonumber
	\end{equation}
proof is complete.
\end{proof}

	\begin{corollary}\label{c71}
We observe that when $v\geq[(\alpha_{1}-1)\vee (\alpha_{2}-1)]$, the following simplifications hold:
$$\dfrac{\eta_{\varepsilon}^{\left[ \left( \frac{v}{\alpha_{2}}\right) \wedge \left( 1-\frac{1\vee(\alpha_{1}-v)}{\alpha_{2}}\right) \right] }}{\gamma_{\varepsilon} } =\dfrac{\eta_{\varepsilon}^{1-\frac{1}{\alpha_{2}}}}{\gamma_{\varepsilon} },$$
 obviously $\frac{\eta_{\varepsilon}^{1-\frac{1}{\alpha_{2}}}}{\gamma_{\varepsilon} } $ corresponds to optimal strong convergence order $1-\frac{1}{\alpha}$ 
 demostrated in \cite{SXX}. From the structure of \eqref{1.1}, we can deduce that imposing sufficient H\"{o}lder regularity conditions with respect to $t$ and $x$ on time-dependent drift $H(t,x,y)$ of slow process $X_{t}^{\varepsilon}$ leads to optimal strong convergence rates.

Meanwhile, it is necessary to emphasize that when $v\geq1$ the regime classification in \eqref{1.1-1},
$$ \frac{\eta_{\varepsilon}^{1-\frac{1-(1\wedge v)}{\alpha_{2}}}}{\gamma_{\varepsilon}^{2}}= \frac{\eta_{\varepsilon}}{\gamma_{\varepsilon}^{2}},$$ 
the term $ \frac{\eta_{\varepsilon}}{\gamma_{\varepsilon}^{2}}$ intrinsically separates distinct dynamical behaviors, while maintaining consistency with the multiscale stochastic framework first developed in \cite{EY2,EY3} and  more precise classifications in \cite{RX}.
	\end{corollary}

\subsection{Proof of \textbf{Theorem \ref{WCRD}}}
\begin{proof}
Analously,  in Regime 1, we have 
\begin{equation}\label{7.15}
	d\bar{X}^{1}_{t}=\bar{b}(t,\bar{X}^{1}_{t})dt+dL_{t}^{1},
\end{equation}
thus by regularity estimates in   Theorem \ref{T62}, and \eqref{6.37} in Theorem \ref{T63}, for $\phi(x)\in C^{2+\gamma}_{b}(\mathbb{R}^{d_{1}})$ in \eqref{6.10}, we obtain
\begin{equation}\label{7.18}
	\begin{split}
\sup_{t\in [0,T]}|\mathbb{E}\phi(X_{t}^{\varepsilon})-\mathbb{E}\phi(\bar{X}^{1}_{t})|&\leq\sup_{t\in [0,T]}\mathbb{E} \left| \int_{0}^{t}-\mathcal{\bar{L}}\hat{u}_{t}(s,X_{s}^{\varepsilon})+\mathcal{L}_{1}\hat{u}_{t}(s,X_{s}^{\varepsilon}) +(\frac{1}{\gamma_{\varepsilon}} \mathcal{L}_{3}\hat{u}_{t}(s,X_{s}^{\varepsilon}),\nabla_{x}\hat{u}_{t}(s,x))ds \right| \\
&\leq C_{T,x,y}\cdot\left( \frac{ \eta_{\varepsilon}^{\left[ \frac{v}{\alpha_{2}}\wedge \left( 1-\frac{\alpha_{1}-v}{\alpha_{2}} \right)  \right]  }}{\gamma_{\varepsilon}}+\frac{ \eta_{\varepsilon}^{1-\frac{1-(1\wedge v)}{\alpha_{2}}}}{\gamma^{2}_{\varepsilon}}+\frac{\eta_{\varepsilon}}{\gamma_{\varepsilon}\beta_{\varepsilon}}\right).
	\end{split}
\end{equation}

As for Regime 2, consider the following equation
\begin{equation}\label{7.20-1}
	\mathcal{L}_{2}(x,y)\Phi(t,x,y)+H(t,x,y)=0,
\end{equation}
then we have the definitions,
\begin{equation}
	\begin{split}
		\bar{c}(t,x)&=\int_{\mathbb{R}^{ d_{2}}}c(x,y)\nabla_{y}\Phi(t,x,y)\mu^{x}(dy),\\
		\bar{H}(t,x)&=\int_{\mathbb{R}^{ d_{2}}}H(t,x,y)\nabla_{x}\Phi(t,x,y)\mu^{x}(dy),
	\end{split}	\nonumber
\end{equation}
here $\Phi(t,x,y)$ is the solution of \eqref{7.20-1}, by Theorem \ref{T62} and Theorem \ref{T63},
\begin{equation}\label{7.20}
	\begin{split}
		&\sup_{t\in [0,T]}|\mathbb{E}\phi(X_{t}^{\varepsilon})-\mathbb{E}\phi(\bar{X}^{2}_{t})|\\
		&\leq\sup_{t\in [0,T]}\mathbb{E} \left|\int_{0}^{t}-\mathcal{\bar{L}}\hat{u}_{t}(s,X_{s}^{\varepsilon})+\mathcal{L}_{1}\hat{u}_{t}(s,X_{s}^{\varepsilon}) +(\frac{1}{\gamma_{\varepsilon}} \mathcal{L}_{3}\hat{u}_{t}(s,X_{s}^{\varepsilon})-\bar{c}(s,\bar{X}^{2}_{s}),\nabla_{x}\hat{u}_{t}(s,x))ds \right| \\
		&\leq C_{T,x,y}\left(\frac{ \eta_{\varepsilon}^{\left[ \frac{v}{\alpha_{2}}\wedge \left( 1-\frac{\alpha_{1}-v}{\alpha_{2}} \right)  \right]  }}{\gamma_{\varepsilon}}+\frac{ \eta_{\varepsilon}^{1-\frac{1-(1\wedge v)}{\alpha_{2}}}}{\gamma^{2}_{\varepsilon}}+\gamma_{\varepsilon}\right),
	\end{split}\nonumber
\end{equation}
here
\begin{equation}\label{7.21}
	d\bar{X}^{2}_{t}=(\bar{b}(t,\bar{X}^{2}_{t})+\bar{c}(t,\bar{X}^{2}_{t}))dt+dL_{t}^{1}.
\end{equation}

By this way, for Regime 3 we have 
\begin{equation}\label{7.23}
	d\bar{X}^{3}_{t}=(\bar{b}(t,\bar{X}^{3}_{t})+\bar{H}(t,\bar{X}^{3}_{t}))dt+dL_{t}^{1},
\end{equation}
consequently,
\begin{equation}\label{7.24}
	\begin{split}
		&\sup_{t\in [0,T]}|\mathbb{E}\phi(X_{t}^{\varepsilon})-\mathbb{E}\phi(\bar{X}^{3}_{t})|\\
		&\leq\sup_{t\in [0,T]}\mathbb{E} \left|\int_{0}^{t}-\mathcal{\bar{L}}\hat{u}_{t}(s,X_{s}^{\varepsilon})+\mathcal{L}_{1}\hat{u}_{t}(s,X_{s}^{\varepsilon}) +(\frac{1}{\gamma_{\varepsilon}} \mathcal{L}_{3}\hat{u}_{t}(s,X_{s}^{\varepsilon})-\bar{H}(s,\bar{X}^{3}_{s}),\nabla_{x}\hat{u}_{t}(s,x))ds \right| \\
		&\leq C_{T,x,y}\left( \gamma_{\varepsilon}^{\frac{2v}{\alpha_{2}}-\left[ 1\vee \left( \frac{2\alpha_{1}}{\alpha_{2}}-1\right)   \right] }+\frac{\gamma_{\varepsilon}}{\beta_{\varepsilon}}\right) .
	\end{split}\nonumber
\end{equation}

Hence  for Regime 4,
\begin{equation}\label{7.25}
	d\bar{X}^{4}_{t}=(\bar{b}(t,\bar{X}^{4}_{t})+\bar{c}(t,\bar{X}^{4}_{t})+\bar{H}(t,\bar{X}^{4}_{t}))dt+dL_{t}^{1},
\end{equation}
and 
\begin{align*}
		&\sup_{t\in [0,T]}|\mathbb{E}\phi(X_{t}^{\varepsilon})-\mathbb{E}\phi(\bar{X}^{4}_{t})|\\
		&\leq\sup_{t\in [0,T]}\mathbb{E} \left|\int_{0}^{t}-\mathcal{\bar{L}}\hat{u}_{t}(s,X_{s}^{\varepsilon})+\mathcal{L}_{1}\hat{u}_{t}(s,X_{s}^{\varepsilon}) +(\frac{1}{\gamma_{\varepsilon}} \mathcal{L}_{3}\hat{u}_{t}(s,X_{s}^{\varepsilon})-\bar{c}(s,\bar{X}^{4}_{s})-\bar{H}(s,\bar{X}^{4}_{s}),\nabla_{x}\hat{u}_{t}(s,x)) ds\right| \\
		&\leq C_{T,x,y}\cdot \gamma_{\varepsilon}^{\frac{2v}{\alpha_{2}}-\left[ 1\vee \left( \frac{2\alpha_{1}}{\alpha_{2}}-1\right)   \right] },
\end{align*}
proof is complete.
\end{proof}

\begin{corollary}\label{c72}
The parameter relationships become particularly transparent when taking $v=\alpha_{1}=\alpha_{2}$, we have
$$ \frac{\eta_{\varepsilon}^{1-\frac{1-(1\wedge v)}{\alpha_{2}}}}{\gamma_{\varepsilon}^{2}}= \frac{\eta_{\varepsilon}}{\gamma_{\varepsilon}^{2}},\quad 	\frac{ \eta_{\varepsilon}^{\left[ \frac{v}{\alpha_{2}}\wedge \left( 1-\frac{\alpha_{1}-v}{\alpha_{2}} \right)\right]  }}{\gamma_{\varepsilon}}=\frac{\eta_{\varepsilon}}{\gamma_{\varepsilon}},\quad \gamma_{\varepsilon}^{\frac{2v}{\alpha_{2}}-\left[ 1\vee \left( \frac{2\alpha_{1}}{\alpha_{2}}-1\right)   \right] }=\gamma_{\varepsilon},$$
the first equality is about regime classification, the second equality in our analysis corresponds to Regime $1$ and Regime $2$, whereas the third equality is associated with Regime $3$ and Regime $4$. 
From the structure of \eqref{1.1}, analogous to the analysis in  Corollary $\ref{c71}$, we observe that $\dfrac{\eta_{\varepsilon}}{\gamma_{\varepsilon}}$ and $\gamma_{\varepsilon}$ align with the weak convergence order $1$ 
established in \cite{SXX}.
\end{corollary}

\section{Acknowledgement}
The author would like to express his sincere gratitude to Professor Xin Chen at Shanghai Jiao Tong University  for his invaluable guidance as the doctoral supervisor, particularly for providing insightful suggestions and helpful discussions throughout this research endeavor. Particular acknowledgement is extended to the pioneering scholars, whose seminal researches in \cite{LRSX,EY1,EY2,EY3,RX,SXX} have profoundly inspired the framework of this study, especially Professor Long-jie Xie in Jiangsu Normal University.

\end{document}